\newtheorem{theorem}{Theorem}[section]%
\newtheorem{lemma}[theorem]{Lemma}%
\newtheorem{remark}[theorem]{Remark}%
\newtheorem{corollary}[theorem]{Corollary}%
\DeclareMathOperator{\G}{G}
\numberwithin{equation}{section}%
\title{Universal cutoff for Dyson Ornstein Uhlenbeck process}%
\author{Jeanne Boursier}
\address{
(JB) Universit\'e Paris-Dauphine, PSL University, UMR 7534, CNRS, CEREMADE, 75016 Paris, France}
\email{boursier@ceremade.dauphine.fr}
\author{Djalil Chafaï}
\address{
(DC) \'Ecole Normale Sup\'erieure, UMR 8553, CNRS, DMA, 75005 Paris, France \& 
Universit\'e Paris-Dauphine, PSL University, UMR 7534, CNRS, CEREMADE, 75016 Paris, France}
\email{djalil@chafai.net}
\urladdr{https://djalil.chafai.net/}
\author{Cyril Labbé}
\address{
(CL) Université de Paris, Laboratoire de Probabilités, Statistiques et Modélisation, UMR 8001, F-75205 Paris, France}
\email{clabbe@lpsm.paris}%
\date{Summer 2021, revised Winter 2022, revised Summer 2022, compiled \today}%
\keywords{Dyson process; Ornstein--Uhlenbeck process; Coulomb gas; Random
  Matrix Theory; High dimensional phenomenon; Cutoff phenomenon;
  High-dimensional probability; Functional inequalities; Spectral analysis;
  Stochastic Calculus; Gaussian analysis; Markov process; Diffusion process;
  Interacting Particle System}
\subjclass[2000]{60J60 (Diffusion processes); 82C22 (Interacting particle systems)}
\begin{document}
\begin{abstract}
  We study the Dyson--Ornstein--Uhlenbeck diffusion process, an evolving gas
  of interacting particles. Its invariant law is the beta Hermite ensemble of
  random matrix theory, a non-product log-concave distribution. We explore the
  convergence to equilibrium of this process for various distances or
  divergences, including total variation, relative entropy, and transportation
  cost. When the number of particles is sent to infinity, we show that a
  cutoff phenomenon occurs: the distance to equilibrium vanishes abruptly at a
  critical time. A remarkable feature is that this critical time is
  independent of the parameter beta that controls the strength of the
  interaction, in particular the result is identical in the non-interacting
  case, which is nothing but the Ornstein--Uhlenbeck process. We also provide
  a complete analysis of the non-interacting case that reveals some new
  phenomena. Our work relies among other ingredients on convexity and
  functional inequalities, exact solvability, exact Gaussian formulas,
  coupling arguments, stochastic calculus, variational formulas and
  contraction properties. This work leads, beyond the specific process that we
  study, to questions on the high-dimensional analysis of heat kernels of
  curved diffusions.
\end{abstract}
\maketitle
\setcounter{tocdepth}{1}
\tableofcontents

\section{Introduction and main results}

Let us consider a Markov process $X={(X_t)}_{t\geq0}$ with state space $S$ and
invariant law $\mu$ for which
\[
  \lim_{t\to\infty}\mathrm{dist}(\mathrm{Law}(X_t)\mid\mu)=0
\]
where $\mathrm{dist}(\cdot\mid\cdot)$ is a distance or divergence on the
probability measures on $S$. Suppose now that $X=X^n$ depends on a dimension,
size, or complexity parameter $n$, and let us set $S=S^n$, $\mu=\mu^n$, and
$X_0=x^n_0\in S^n$. For example $X^n$ can be a random walk on the symmetric
group of permutations of $\{1,\ldots,n\}$, Brownian motion on the group of
$n\times n$ unitary matrices, Brownian motion on the $n$-dimensional sphere,
etc. In many of such examples, it has been proved that when $n$ is large
enough, the supremum over some set of initial conditions $x^n_0$ of the
quantity $\mathrm{dist}(\mathrm{Law}(X_t^n)\mid\mu^n)$ collapses abruptly to
$0$ when $t$ passes a critical value $c=c_n$ which may depend on $n$. This is
often referred to as a \emph{cutoff phenomenon}. More precisely, if
$\mathrm{dist}$ ranges from $0$ to $\max$, then, for some subset
$S^n_0 \subset S^n$ of initial conditions, some critical value $c=c_n$ and for
all $\varepsilon\in(0,1)$,
\[
  \lim_{n\to\infty}%
   \sup_{x^n_0 \in S^n_0}
  \mathrm{dist}(\mathrm{Law}(X^n_{t_n})\mid\mu^n)%
  =\begin{cases}%
    \max & \text{if $t_n=(1-\varepsilon)c_n$}\\%
    0 & \text{if $t_n=(1+\varepsilon)c_n$}%
  \end{cases}.
\]
It is standard to introduce, for an arbitrary small threshold $\eta>0$, the
quantity
$\inf\{t\geq0:\sup_{x_0\in
  S_0^n}\mathrm{dist}(\mathrm{Law}(X^n_t)\mid\mu^n)\leq\eta\}$ known as the
\emph{mixing time} in the literature. Of course such a definition fully makes
sense as soon as
$t\mapsto{\sup_{x_0\in S_0^n}}\mathrm{dist}(\mathrm{Law}(X^n_t)\mid\mu^n)$ is
non-increasing.

When $S^n$ is finite, it is customary to take $S^n_0 = S^n$. When $S^n$ is
infinite, it may happen that the supremum over the whole set $S^n$ of the
distance to equilibrium remains equal to $\max$ at all times, in which case
one has to consider strict subspaces of initial conditions. For some
processes, it is possible to restrict $S^n_0$ to a single state in which case
one obtains a very precise description of the convergence to equilibrium
starting from this initial condition. Note that the constraint over the
initial condition can be made compatible with a limiting dynamics, for
instance a mean-field limit when the process describes an exchangeable
interacting particle system.

The \emph{cutoff phenomenon} was put forward by Aldous and Diaconis at the
origin for random walks on finite sets, see for instance
\cite{zbMATH03973935,MR1374011,MR2375599,zbMATH06813269} and references
therein. The analysis of the cutoff phenomenon is the subject of an important
activity, still seeking for a complete theory: let us mention that, for the
total variation distance, Peres proposed the so-called product condition (the
mixing time must be much larger than the inverse of the spectral gap) as a
necessary and sufficient condition for a cutoff phenomenon to hold, but
counter-examples were exhibited~\cite[Sec.~18.3]{zbMATH06813269} and the
product condition is only necessary.

The study of the cutoff phenomenon for Markov diffusion processes goes back at
least to the works of Saloff-Coste \cite{saloff1994precise,MR2111426} in
relation notably with Nash--Sobolev type functional inequalities, heat kernel
analysis, and Diaconis--Wilson probabilistic techniques. We also refer to the
more recent work \cite{meliot2014cut} for the case of diffusion processes on
compact groups and symmetric spaces, in relation with group invariance and
representation theory, a point of view inspired by the early works of Diaconis
on Markov chains and of Saloff-Coste on diffusion processes. Even if most of
the available results in the literature on the cutoff phenomenon are related
to compact state spaces, there are some notable works devoted to non-compact
spaces such as
\cite{MR2203823,CLL2,10.1214/19-AAP1526,barrera-hogele-pardo-langevin,barrera-hogele-pardo-small-levy,MR4073676}.

Our contribution is an exploration of the cutoff phenomenon for the
Dyson--Ornstein--Uhlenbeck diffusion process, for which the state space is
$\mathbb{R}^n$. This process is an interacting particle system. When the
interaction is turned off, we recover the Ornstein--Uhlenbeck process, a
special case that has been considered previously in the literature but for
which we also provide new results.

\subsection{Distances}

As for $\mathrm{dist}$ we use several standard distances or divergences
between probability measures: Wasserstein, total variation (TV), Hellinger,
Entropy, $\chi^2$ and Fisher, surveyed in Appendix \ref{ap:distances}. We take
the following convention for probability measures $\mu$ and $\nu$ on the same
space:
\begin{equation}\label{eq:dist}
  \mathrm{dist}(\mu\mid\nu)
  =\begin{cases}
    \mathrm{Wasserstein}(\mu,\nu) &\text{when
      $\mathrm{dist}=\mathrm{Wasserstein}$}\\
    \left\|\mu-\nu\right\|_{\mathrm{TV}} &\text{when
      $\mathrm{dist}=\mathrm{TV}$}\\
    \mathrm{Hellinger}(\mu,\nu)
    &\text{when $\mathrm{dist}=\mathrm{Hellinger}$}\\
    \mathrm{Kullback}(\mu\mid\nu)
    &\text{when $\mathrm{dist}=\mathrm{Kullback}$}\\
    \chi^2(\mu\mid\nu)
    &\text{when $\mathrm{dist}=\chi^2$}\\
    \mathrm{Fisher}(\mu\mid\nu)
    &\text{when $\mathrm{dist}=\mathrm{Fisher}$}
  \end{cases},
\end{equation}
see Appendix \ref{ap:distances} for precise definitions. The maximal value $\max$ taken
by $\mathrm{dist}$ is given by
\begin{equation}\label{eq:max}
  \max = \begin{cases}
    1
    &\text{ if } \mathrm{dist} \in \{ \mathrm{TV},\mathrm{Hellinger}\},\\
    +\infty
    &\text{ if } \mathrm{dist} \in \{\mathrm{Wasserstein},\mathrm{Kullback}, \chi^2, \mathrm{Fisher}\}.
  \end{cases}
\end{equation}

\subsection{The Dyson--Ornstein--Uhlenbeck (DOU) process and preview of main
  results} The DOU process is the solution $X^n={(X^n_t)}_{t\geq0}$ on
$\mathbb{R}^n$ of the stochastic differential equation
\begin{equation}\label{eq:DOU}
  X^n_0=x^n_0\in\mathbb{R}^n,\quad
  \mathrm{d}X_t^{n,i}
  =\sqrt{\frac{2}{n}}\mathrm{d}B^i_t
  -V'(X_t^{n,i})\mathrm{d}t
  +\frac{\beta}{n}\sum_{j\neq i}\frac{\mathrm{d}t}{X_t^{n,i}-X_t^{n,j}},\quad 1\leq
  i\leq n,
\end{equation}
where ${(B_t)}_{t\geq0}$ is a standard $n$-dimensional Brownian motion (BM), and where
\begin{itemize}
\item $V(x)=\frac{x^2}{2}$ is a ``confinement potential'' acting through the
  drift $-V'(x)=-x$
\item $\beta\geq0$ is a parameter tuning the interaction strength.
\end{itemize}
The notation $X^{n,i}_t$ stands for the $i$-th coordinate of the vector
$X^n_t$. The process $X^n$ can be thought of as an interacting particle system
of $n$ one-dimensional Brownian particles $X^{n,1},\ldots,X^{n,n}$, subject to
confinement and singular pairwise repulsion when $\beta>0$ (respectively first
and second term in the drift). We take an inverse temperature of order $n$ in
\eqref{eq:DOU} in order to obtain a mean-field limit without time-changing the
process, see Section \ref{se:meanfield}. The spectral gap is $1$ for all
$n\geq1$, see Section \ref{ss:Lp}. We refer to Section \ref{se:altparam} for
other parametrizations or choices of inverse temperature.

In the special cases $\beta\in\{0,1,2\}$, the cutoff phenomenon for the DOU
process can be established by using Gaussian analysis and stochastic calculus,
see Sections \ref{ss:OU} and \ref{ss:exact}. For $\beta = 0$, the process
reduces to the Ornstein--Uhlenbeck process (OU) and its behavior serves as a
benchmark for the interaction case $\beta\neq0$, while when $\beta\in\{1,2\}$,
the approach involves a lift to unitary invariant ensembles of random matrix
theory. For a general $\beta\geq1$, our main results regarding the cutoff
phenomenon for the DOU process are given in Sections \ref{ss:cutoffgen} and
\ref{ss:nonpoint}. We are able, in particular, to prove the following: for all
$\mathrm{dist}\in\{\mathrm{Wasserstein},\mathrm{TV},\mathrm{Hellinger}\}$,
$a>0$, $\varepsilon \in (0,1)$, we have
\[
  \lim_{n\to\infty}%
  \sup_{x^n_0 \in [-a,a]^n}
  \mathrm{dist}(\mathrm{Law}(X^n_{t_n})\mid P_n^\beta)
  =\begin{cases}
    \max & \text{if $t_n=(1-\varepsilon)c_n$}\\
    0 & \text{if $t_n=(1+\varepsilon)c_n$}
  \end{cases},
\]
where $P_n^\beta$ is the invariant law of the process, and where
\[
  c_n
  := \begin{cases}
    \log(\sqrt{n}a)
    &\text{ if $\mathrm{dist} = \mathrm{Wasserstein}$}\\
    \log(na)
    &\text{ if $\mathrm{dist} \in \{\mathrm{TV},\mathrm{Hellinger}\}$}
  \end{cases}.
\]
This result is stated in a slightly more general form in Corollary
\ref{th:DOUWTV}. Our proof relies crucially on an exceptional exact
solvability of the dynamics, notably the fact that we know explicitly the
optimal long time behavior in entropy and coupling distance, as well as the
eigenfunction associated to the spectral gap which turns out to be linear and
optimal. This comes from the special choice of $V$ as well as the special
properties of the Coulomb interaction. We stress that such an exact
solvability is no longer available for a general strongly convex $V$, even for
instance in the simple example $V(x)=\frac{x^2}{2}+x^4$ or for general linear
forces. Nevertheless, and as usual, two other special classical choices of $V$
could be explored, related to Laguerre and Jacobi weights, see Section
\ref{sec:genpot}.


\subsection{Analysis of the Dyson--Ornstein--Uhlenbeck process}

The process $X^n$ was essentially discovered by Dyson in \cite{MR148397}, in
the case $\beta\in\{1,2,4\}$, because it describes the dynamics of the
eigenvalues of $n\times n$ symmetric/Hermitian/symplectic random matrices with
independent Ornstein--Uhlenbeck entries, see Lemma \ref{le:DOU} and Lemma
\ref{le:DOUS} below for the cases $\beta=1$ and $\beta=2$ respectively.

\begin{itemize}
\item Case $\beta=0$ (interaction turned off). The particles become $n$
  independent one-dimensional Ornstein--Uhlenbeck processes, and the DOU
  process $X^n$ becomes exactly the $n$-dimensional Ornstein--Uhlenbeck
  process $Z^n$ solving \eqref{eq:OU}. The process lives in $\mathbb{R}^n$.
  The particles collide but since they do not interact, this does not raise
  any issue.
\item Case $0<\beta<1$. Then with positive probability the particles collide
  producing a blow up of the drift, see for instance
  \cite{MR1440140,zbMATH07238061} for a discussion. Nevertheless, it is
  possible to define the process for all times, for instance by adding a local
  time term to the stochastic differential equation, see \cite{zbMATH07238061}
  and references therein. It is natural to expect that the cutoff universality
  works as for $\beta\not\in(0,1)$, but for simplicity we do not consider this
  case here.
\item Case $\beta\geq1$. If we order the coordinates by defining the convex
  domain
  \[
    D_n=\{x\in\mathbb{R}^n:x_1<\cdots<x_n\},
  \]
  and if $x^n_0\in D_n$ then the equation \eqref{eq:DOU} admits a unique
  strong solution that never exits $D_n$, in other words the particles never
  collide and the order of the initial particles is preserved at all times,
  see~\cite{rogers1993interacting}. Moreover if
  \[
    \overline{D}_n=\{x\in\mathbb{R}^n:x_1\leq\cdots\leq x_n\}
  \]
  then it is possible to start the process from the boundary
  $\overline{D}_n\setminus D_n$, in particular from $x^n_0$ such that
  $x^{n,1}_0=\cdots=x^{n,n}_0$, and despite the singularity of the drift, it
  can be shown that with probability one, $X^n_t\in D_n$ for all $t>0$. We
  refer to \cite[Th.~4.3.2]{MR2760897} for a proof in the Dyson Brownian
  Motion case that can be adapted \emph{mutatis mutandis}.
\end{itemize}

In the sequel, we will only consider the cases $\beta=0$ with
$x_0^n\in\mathbb{R}^n$ and $\beta\geq1$ with $x^n_0\in \overline{D}_n$.

The drift in \eqref{eq:DOU} is the gradient of a function, and \eqref{eq:DOU}
rewrites
\begin{equation}\label{eq:DOU2}
  X_0^n=x_0^n\in D_n,\quad
  \mathrm{d}X_t^n=\sqrt{\frac{2}{n}}\mathrm{d}B_t-\frac1{n}\nabla E(X_t^n)\mathrm{d}t,
\end{equation}
where
\begin{equation}\label{eq:E}
  E(x_1,\ldots,x_n)
  =n\sum_{i=1}^nV(x_i)
  +{\beta}\sum_{i>j}\log\frac{1}{|x_i-x_j|}
\end{equation}
can be interpreted as the energy of the configuration of particles
$x_1,\ldots,x_n$.
\begin{itemize}
\item If $\beta=0$, then the Markov process $X^n$ is an Ornstein--Uhlenbeck
  process, irreducible with unique invariant law
  $P_n^0=\mathcal{N}(0,\frac{1}{n}I_n)$ which is reversible.
\item If $\beta\geq1$, then the Markov process $X^n$ is not irreducible, but
  $D_n$ is a recurrent class carrying a unique invariant law $P_n^\beta$,
  which is reversible and given by
  \begin{equation}\label{eq:P}
    P_n^\beta
    =\frac{\mathrm{e}^{-E(x_1,\ldots,x_n)}}{C_n^\beta}\mathbf{1}_{(x_1,\ldots,x_n)\in\overline{D}_n}
    \mathrm{d}x_1\cdots\mathrm{d}x_n,
  \end{equation}
  where $C_n^\beta$ is the normalizing factor given by
  \begin{equation}\label{eq:ZD}
    C_n^\beta=\int_{\overline{D}_n}
    \mathrm{e}^{-E(x_1,\ldots,x_n)}\mathrm{d}x_1\cdots\mathrm{d}x_n.
  \end{equation}
\end{itemize}

In terms of geometry, it is crucial to observe that since $-\log$ is convex on $(0,+\infty)$, the map
\[
 (x_1,\ldots,x_n)\in D_n\mapsto
 \mathrm{Interaction}(x_1,\ldots,x_n)
  ={\beta}\sum_{i>j}\log\frac{1}{x_i-x_j},
\]
is convex. Thus, since $V$ is convex on $\mathbb{R}$, it follows that $E$ is
convex on $D_n$. For all $\beta\geq0$, the law $P_n^\beta$ is log-concave with
respect to the Lebesgue measure as well as with respect to
$\mathcal{N}(0,\frac{1}{n}I_n)$.

\subsection{Non-interacting case and Ornstein--Uhlenbeck benchmark}
\label{ss:OU}

When we turn off the interaction by taking $\beta=0$ in \eqref{eq:DOU}, the
DOU process becomes an Ornstein--Uhlenbeck process (OU)
$Z^n={(Z^n_t)}_{t\geq0}$ on $\mathbb{R}^n$ solving the stochastic differential
equation
\begin{equation}\label{eq:OU}
  Z^n_0=z^n_0\in\mathbb{R}^n,\quad
  \mathrm{d}Z^n_t=\sqrt{\frac{2}{n}}\mathrm{d}B^n_t-Z^n_t\mathrm{d}t,
\end{equation}
where $B^n$ is a standard $n$-dimensional BM. The invariant law of $Z^n$ is
the product Gaussian law
$P_n^0=\mathcal{N}(0,\frac{1}{n}I_n)=\mathcal{N}(0,\frac{1}{n})^{\otimes n}$.
The explicit Gaussian nature of
$Z_t^n\sim\mathcal{N}(z_0^n\mathrm{e}^{-t},\frac{1-\mathrm{e}^{-2t}}{n}I_n)$,
valid for all $t\geq0$, allows for a fine analysis of convergence to
equilibrium, as in the following theorem.

\begin{theorem}[Cutoff for OU: mean-field regime]\label{th:OU1}
  Let $Z^n={(Z^n_t)}_{t\geq0}$ be the OU process \eqref{eq:OU} and let $P_n^0$
  be its invariant law. Suppose that
  \[
    \varliminf_{n\to\infty}\frac{|z^n_0|^2}{n}>0
    \quad\text{and}\quad
    \varlimsup_{n\to\infty}\frac{|z^n_0|^2}{n}<\infty
  \]
  where $|z|=\sqrt{z_1^2+\cdots+z_n^2}$ is the Euclidean norm. Then for all
  $\varepsilon\in(0,1)$,
  \[
    \lim_{n\to\infty}\mathrm{dist}(\mathrm{Law}(Z^n_{t_n})\mid P_n^0)
    =\begin{cases}
      \max & \text{if $t_n=(1-\varepsilon)c_n$}\\
      0 & \text{if $t_n=(1+\varepsilon)c_n$}
      \end{cases}
  \]
  where
  \[
    c_n
    =\begin{cases}
      \frac{1}{2}\log(n) & \text{if $\mathrm{dist}=\mathrm{Wasserstein}$},\\
      \log(n) & \text{if $\mathrm{dist}\in\{\mathrm{TV},\mathrm{Hellinger},\mathrm{Kullback},\chi^2\}$},\\
      \frac{3}{2}\log(n) & \text{if $\mathrm{dist}=\mathrm{Fisher}$}.
    \end{cases}
  \]
\end{theorem}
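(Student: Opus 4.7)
The plan is to exploit the fact that $Z^n_t$ is explicitly Gaussian, namely
\[
  \mathrm{Law}(Z^n_t) = \mathcal{N}\!\left(z^n_0\, \mathrm{e}^{-t},\; \tfrac{1-\mathrm{e}^{-2t}}{n}\, I_n\right),
\]
so the task reduces to comparing two isotropic Gaussians with scalar covariances and shifted means. For each item on the list, this comparison admits either a closed-form expression (Wasserstein, Kullback, $\chi^2$, Hellinger, Fisher) or a sharp two-sided estimate (total variation). Setting $M_n := |z^n_0|^2/n$, which by hypothesis stays bounded away from $0$ and $+\infty$, all quantities become functions of $(n, t, M_n)$ only, and the cutoff reduces to identifying the dominant balance at each critical scale.

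For Wasserstein, the Bures--Wasserstein formula for Gaussians with scalar covariance gives
\[
  W_2^2(\mathrm{Law}(Z^n_t), P_n^0) = |z^n_0|^2 \mathrm{e}^{-2t} + \bigl(\sqrt{1 - \mathrm{e}^{-2t}} - 1\bigr)^2,
\]
in which the first term, of order $n M_n\, \mathrm{e}^{-2t}$, dominates and transitions at $t = \tfrac{1}{2}\log n$, while the second is $O(\mathrm{e}^{-4t})$ and negligible there. The Gaussian Kullback formula yields
\[
  2\,\mathrm{Kullback} = -n\log(1-\mathrm{e}^{-2t}) - n\mathrm{e}^{-2t} + n|z^n_0|^2 \mathrm{e}^{-2t},
\]
in which only the last summand is non-vanishing at $t \sim \log n$, placing the cutoff there. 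For $\chi^2$ and Hellinger, the closed-form Gaussian expressions factor into a variance-mismatch prefactor — which, at the critical scale $t = \log n$, is $1 + O(n^{-1})$ by Taylor expansion of $\log(1-\mathrm{e}^{-2t})$ — times a mean-driven exponential with exponent of order $n|z^n_0|^2 \mathrm{e}^{-2t}$, yielding the same cutoff $t = \log n$. A direct gradient computation of $\log(p_t/p_\infty)$ then gives
\[
  \mathrm{Fisher}(\mathrm{Law}(Z^n_t) \mid P_n^0) = n^2 |z^n_0|^2\, \mathrm{e}^{-2t} + \frac{n^2\, \mathrm{e}^{-4t}}{1 - \mathrm{e}^{-2t}},
\]
whose dominant term is $n^3 M_n\, \mathrm{e}^{-2t}$, transitioning at $t = \tfrac{3}{2}\log n$, as claimed.

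Total variation is the most delicate item, since no useful closed form is available; this is the main obstacle of the proof. The upper bound at $t_n = (1+\varepsilon)\log n$ follows from the Csisz\'ar--Kullback--Pinsker inequality together with the Kullback estimate above. For the lower bound at $t_n = (1-\varepsilon)\log n$, I would invoke the data-processing inequality along the orthogonal projection onto the one-dimensional line $\mathbb{R}\, z^n_0$: the two pushforwards are real Gaussians with mean separation $|z^n_0|\, \mathrm{e}^{-t_n}$ and common standard deviation of order $1/\sqrt{n}$, so their mean separation, measured in units of standard deviation, is $\sqrt{M_n}\, n^{\varepsilon}(1+o(1))$, which diverges. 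A classical one-dimensional Gauss--Gauss estimate (for example, testing the event $\{x > |z^n_0|\, \mathrm{e}^{-t_n}/2\}$ along this line) then forces TV to tend to $1$, completing the cutoff.
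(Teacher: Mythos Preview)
Your proposal is correct and matches the paper's proof almost exactly: the paper also derives the explicit Gaussian formulas \eqref{eq:OUH}--\eqref{eq:OUW} from the Mehler formula and reads off the cutoffs for Hellinger, Kullback, $\chi^2$, Fisher and Wasserstein directly from them, just as you do.

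The only point of difference is the total variation lower bound. You project onto the line $\mathbb{R}\,z^n_0$ and compare two one-dimensional Gaussians via a half-space test; this works, and is in the spirit of the contraction argument the paper uses later for the DOU process (Corollary~\ref{cor:LowerBd}). The paper, however, takes a shorter route here: since the two-sided comparison $\mathrm{Hellinger}^2\le\|\cdot\|_{\mathrm{TV}}\le\mathrm{Hellinger}\sqrt{2-\mathrm{Hellinger}^2}$ (Lemma~\ref{le:distineqs}) is available, the TV cutoff at $c_n=\log n$ follows immediately from the Hellinger cutoff at the same time, with no projection needed. Your argument buys nothing extra in this Gaussian setting, but it is the right idea when a closed Hellinger formula is unavailable.
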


Theorem \ref{th:OU1} is proved in Section \ref{se:OU}. See Figure
\ref{fi:oudou} and Figure \ref{fi:hellinger} for a numerical experiment.

Theorem \ref{th:OU1} constitutes a very natural benchmark for the cutoff
phenomenon for the DOU process. Theorem \ref{th:OU1} is not a surprise, and
actually the TV and Hellinger cases are already considered in
\cite{MR2203823}, see also \cite{Barrera}. Let us mention that
in~\cite{barrera-hogele-pardo-small-levy}, a cutoff phenomenon for TV, entropy
and Wasserstein is proven for the OU process of \emph{fixed} dimension $d$ and
vanishing noise. This is to be compared with our setting where the dimension
is sent to infinity: the results (and their proofs) are essentially the same
in these two situations, however we will see below that if one considers more
general initial conditions, there are some substantial differences according
to whether the dimension is fixed or sent to infinity.%


The restriction over the initial condition in Theorem \ref{th:OU1} is spelled
out in terms of the second moment of the empirical distribution, a natural
choice suggested by the mean-field limit discussed in Section
\ref{se:meanfield}. It yields a mixing time of order $\log(n)$, just like for
Brownian motion on compact Lie groups, see \cite{MR2111426,meliot2014cut}. For
the OU process and more generally for overdamped Langevin processes, the
non-compactness of the space is replaced by the confinement or tightness due
to the drift.

Actually, Theorem \ref{th:OU1} is a particular instance of the following, much
more general result that reveals that, except for the Wasserstein distance, a
cutoff phenomenon \emph{always} occurs.

\begin{theorem}[General cutoff for OU]\label{th:OU2}
  Let $Z^n={(Z^n_t)}_{t\geq0}$ be the OU process \eqref{eq:OU} and let $P_n^0$
  be its invariant law. Let
  $\mathrm{dist}\in\{\mathrm{TV},\mathrm{Hellinger},\mathrm{Kullback},\chi^2,\mathrm{Fisher}\}$.
  Then, for all $\varepsilon\in(0,1)$,
  \[
    \lim_{n\to\infty}
    \mathrm{dist}(\mathrm{Law}(Z^n_{t_n})\mid P_n^0)
    =\begin{cases}
      \max & \text{if $t_n=(1-\varepsilon)c_n$,}\\
      0 & \text{if $t_n=(1+\varepsilon)c_n$}
    \end{cases}
  \]
  where
  \[
    c_n =
    \begin{cases}
      \log(\sqrt{n}|z_0^n|) \vee\frac{1}{4}\log(n)
      &\text{if $\mathrm{dist}\in\{\mathrm{TV},\mathrm{Hellinger},\mathrm{Kullback},\chi^2\}$},\\
      \log(n|z_0^n|) \vee\frac{1}{2}\log(n)
      &\text{if $\mathrm{dist} = \mathrm{Fisher}$}.
    \end{cases}
  \]
  Regarding the Wasserstein distance, the following dichotomy occurs:
  \begin{itemize}
  \item if $\lim_{n\to\infty}|z_0^n|=+\infty$, then for all
    $\varepsilon\in(0,1)$, with $c_n=\log|z_0^n|$,
    \[
      \lim_{n\to\infty}
      \mathrm{Wasserstein}(\mathrm{Law}(Z_{t_n}),P_n^0) =
      \begin{cases}
        +\infty & \text{if $t_n=(1-\varepsilon)c_n$,}\\
        0 & \text{if $t_n=(1+\varepsilon)c_n$},
      \end{cases}
    \]
  \item if $\lim_{n\to\infty}|z_0^n|=\alpha \in [0,\infty)$ then there is
    \emph{no cutoff phenomenon} namely for any $t>0$
    \[
      \lim_{n\to\infty}
      \mathrm{Wasserstein}^2(\mathrm{Law}(Z_{t}),P_n^0)
      =
      \alpha^2\mathrm{e}^{-2t} +2\Bigr(1-\sqrt{1-\mathrm{e}^{-2t}} - \tfrac12\mathrm{e}^{-2t}\Bigr).
    \]
  \end{itemize}
\end{theorem}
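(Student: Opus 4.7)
The plan is to exploit the fact that $\mathrm{Law}(Z_t^n)=\mathcal{N}(z_0^n e^{-t},\sigma_t^2 I_n)$ with $\sigma_t^2:=(1-e^{-2t})/n$, and $P_n^0=\mathcal{N}(0,I_n/n)$, are both Gaussian. Every divergence in the statement admits a closed-form expression depending only on $n$, $|z_0^n|$ and $t$, so the cutoff claims reduce to a deterministic asymptotic analysis of these explicit expressions near the candidate critical time.

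Using the classical Gaussian identities one obtains, for instance,
\[
\mathrm{Kullback}(\mathrm{Law}(Z_t^n)\mid P_n^0)=\tfrac{n}{2}\bigl[|z_0^n|^2 e^{-2t}-e^{-2t}-\log(1-e^{-2t})\bigr],
\]
\[
1+\chi^2(\mathrm{Law}(Z_t^n)\mid P_n^0)=(1-e^{-4t})^{-n/2}\exp\Bigl(\tfrac{n|z_0^n|^2 e^{-2t}}{1+e^{-2t}}\Bigr),
\]
\[
\mathrm{Fisher}(\mathrm{Law}(Z_t^n)\mid P_n^0)=n^2|z_0^n|^2 e^{-2t}+\tfrac{n^2 e^{-4t}}{1-e^{-2t}},
\]
and analogous explicit formulas for $\mathrm{Hellinger}^2$ and $W^2$. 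Each of the first three expressions is, to leading order, the sum of a \emph{mean-displacement} contribution of order $n^\alpha|z_0^n|^2 e^{-2t}$ and a \emph{variance-mismatch} contribution of order $n^\alpha e^{-4t}$ (via $-e^{-2t}-\log(1-e^{-2t})=\tfrac12 e^{-4t}+O(e^{-6t})$), with $\alpha=1$ for Kullback, $\chi^2$, Hellinger and $\alpha=2$ for Fisher. Requiring both contributions to be $o(1)$ gives exactly $c_n=\log(\sqrt n|z_0^n|)\vee\tfrac14\log n$ in the first case and $c_n=\log(n|z_0^n|)\vee\tfrac12\log n$ in the Fisher case. Substituting $t=(1\pm\varepsilon)c_n$ then shows that at least one term blows up below $c_n$, forcing convergence to $\max$, while both terms vanish above $c_n$, forcing convergence to $0$. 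For TV and Hellinger, the standard comparisons $\mathrm{TV}^2\le\tfrac12\mathrm{Kullback}$ and $\mathrm{Hellinger}^2\le\mathrm{TV}\le\sqrt 2\,\mathrm{Hellinger}$ bridge the cutoff in Hellinger (obtained directly from the explicit Hellinger formula for Gaussians) to the cutoff in TV.

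The Wasserstein case is qualitatively different because the variance-mismatch contribution $2(1-\sqrt{1-e^{-2t}}-\tfrac12 e^{-2t})=\tfrac14 e^{-4t}+O(e^{-6t})$ is of order $1$ in $n$, not a power of $n$ as in the other divergences; hence no $\tfrac14\log n$ floor arises. If $|z_0^n|\to\infty$, the mean-displacement term $|z_0^n|^2 e^{-2t}$ dominates and produces the sharp transition at $c_n=\log|z_0^n|$; if $|z_0^n|\to\alpha\in[0,\infty)$, both pieces remain $O(1)$ and passing to the limit in the closed form gives the announced smooth strictly decreasing function of $t$, precluding a cutoff. I expect the main technical delicacy to lie in the TV and Hellinger lower bounds in the variance-dominated regime $|z_0^n|\ll n^{-1/4}$ where $c_n=\tfrac14\log n$: one must exhibit a separating statistic (e.g. $|X|^2$) that concentrates under the two Gaussians at values differing by $\Theta(e^{-2t})\gg n^{-1/2}$ when $t<(1-\varepsilon)\tfrac14\log n$; everything else reduces to elementary calculus on the explicit formulas above.
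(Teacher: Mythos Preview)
Your proposal is correct and follows essentially the same route as the paper: explicit Gaussian formulas for each divergence (the paper's equations for Hellinger, Kullback, $\chi^2$, Fisher, Wasserstein), identification of the mean-displacement and variance-mismatch contributions, and the two-sided Hellinger--TV comparison to carry the cutoff over to total variation. One remark: your anticipated ``main technical delicacy'' in the variance-dominated regime is in fact absent --- the explicit Hellinger formula already gives $\mathrm{Hellinger}^2\to 1$ at $t=(1-\varepsilon)\tfrac14\log n$ directly (the exponent contains a term $\sim -\tfrac{n}{4}e^{-4t}$ that diverges), and the inequality $\mathrm{TV}\ge\mathrm{Hellinger}^2$ then forces $\mathrm{TV}\to 1$ without any auxiliary statistic; the paper proceeds exactly this way, and the $|X|^2$ statistic appears only as a side remark.
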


Theorem \ref{th:OU2} is proved in Section \ref{se:OU}.

The observation that for every distance or divergence, except for the
Wasserstein distance, a cutoff phenomenon occurs \emph{generically} seems to
be new.

Let us make a few comments. First, in terms of convergence to equilibrium the
relevant observable in Theorem \ref{th:OU2} appears to be the Euclidean norm
$|z_0^n|$ of the initial condition. This quantity differs from the
eigenfunction associated to the spectral gap of the generator, which is given
by $z_1+\cdots+z_n$ as we will recall later on. Note by the way that
\eqref{eq:OUL2} and \eqref{eq:l2expansion} are equal! Second, cutoff occurs at
a time that is \emph{independent} of the initial condition provided that its
Euclidean norm is small enough: this cutoff time appears as the time required
to regularize the initial condition (a Dirac mass) into a sufficiently spread
out absolutely continuous probability measure; in particular this cutoff
phenomenon would not hold generically if we allowed for spread out (non-Dirac)
initial conditions. Note that, for the OU process of \emph{fixed} dimension
and vanishing noise, we would not observe a cutoff phenomenon when starting
from initial conditions with small enough Euclidean norm: this is a high
dimensional phenomenon. In this respect, the Wasserstein distance is peculiar
since it is much less stringent on the local behavior of the measures at
stake: for instance
$\lim_{n\to\infty}\mathrm{Wasserstein}(\delta_0,\delta_{1/n})=0$ while for all
other distances or divergences considered here, the corresponding quantity
would remain equal to $\max$. This explains the absence of \emph{generic}
cutoff phenomenon for Wasserstein. Third, the explicit expressions provided in
our proof allow to extract the cutoff profile in each case, but we prefer not
to provide them in our statement and refer the interested reader to the end of
Section \ref{se:OU}.

\subsection{Exactly solvable intermezzo}
\label{ss:exact}

When $\beta\neq0$, the law of the DOU process is no longer Gaussian nor
explicit. However several exactly solvable aspects are available. Let us
recall that a Cox--Ingersoll--Ross process (CIR) of parameters $a,b,\sigma$ is
the solution $R = (R_t)_{t\ge 0}$ on $\mathbb{R}_+$ of
\begin{equation}\label{eq:CIR}
  R_0 = r_0 \in \mathbb{R}_+,\quad %
  \mathrm{d}R_t = \sigma\sqrt{R_t} \mathrm{d}W_t + (a-bR_t) \mathrm{d}t,
\end{equation}
where $W$ is a standard BM. Its invariant law is
$\mathrm{Gamma}(2a/\sigma^2,2b/\sigma^2)$ with density proportional to
$r\ge 0 \mapsto r^{2a/\sigma^2-1}\mathrm{e}^{-2br/\sigma^2}$, with mean $a/b$,
and variance $a\sigma^2/(2b^2)$. It was proved by William Feller in
\cite{zbMATH03069442} that the density of $R_t$ at an arbitrary $t$ can be
expressed in terms of special functions.

If ${(Z_t)}_{t\geq0}$ is a $d$-dimensional OU process of parameters
$\theta\geq0$ and $\rho\in\mathbb{R}$, weak solution of
\begin{equation}\label{eq:OU2}
  \mathrm{d}Z_t=\theta\mathrm{d}W_t-\rho Z_t\mathrm{d}t
\end{equation}
where $W$ is a $d$-dimensional BM, then $R={(R_t)}_{t\geq0}$, $R_t:=|Z_t|^2$,
is a CIR process with parameters $a=\theta^2d$, $b=2\rho$, $\sigma = 2\theta$.
When $\rho=0$ then $Z$ is a BM while $R=|Z|^2$ is a squared Bessel process.

The following theorem gathers some exactly solvable aspects
of the DOU process for general $\beta \ge 1$, which are largely already in the
statistical physics folklore, see \cite{zbMATH07261321}. It is based on our knowledge of eigenfunctions associated to the
first spectral values of the dynamics, see \eqref{eq:eigenfunctions}, and
their remarkable properties. As in \eqref{eq:eigenfunctions}, we set
$\pi(x):=x_1+\cdots+x_n$ when $x\in\mathbb{R}^n$.

\begin{theorem}[From DOU to OU and CIR]
  \label{th:DOUOU}
  Let ${(X^n_t)}_{t\geq0}$ be the DOU process \eqref{eq:DOU}, with $\beta=0$
  or $\beta\geq1$, and let $P_n^\beta$ be its invariant law.
  Then:\begin{itemize}
  \item ${(\pi(X^n_t))}_{t\geq0}$ is a one-dimensional OU process weak
    solution of \eqref{eq:OU} with $\theta=\sqrt{2}$, $\rho=1$. Its invariant
    law is $\mathcal{N}(0,1)$. It does not depend on $\beta$, and
    $\pi(X^n_t)\sim\mathcal{N}(\pi(x^n_0)\mathrm{e}^{-t},1-\mathrm{e}^{-2t})$,
    $t\geq0$. Furthermore $\pi(X^n_t)^2$ is a CIR process of parameters $a=2$,
    $b=2$, $\sigma = 2\sqrt{2}$.
  \item ${(|X^n_t|^2)}_{t\geq0}$ is a CIR process, weak solution of
    \eqref{eq:CIR} with $a=2+\beta(n-1)$, $b=2$, $\sigma = \sqrt{8/n}$. Its
    invariant law is
    $\mathrm{Gamma}(\frac{1}{2}(n+\beta\frac{n(n-1)}{2}),\frac{n}{2})$ of mean
    $1+\frac{\beta}{2}(n-1)$ and variance $\beta+\frac{2-\beta}{n}$.
    Furthermore, if $d=n+\beta\frac{n(n-1)}{2}$ is a positive integer,
    then ${(|X^n_t|^2)}_{t\geq0}$ has the law of ${(|Z_t|^2)}_{t\geq0}$ where
    ${(Z_t)}_{t\geq0}$ is a $d$-dimensional OU process, weak solution of
    \eqref{eq:OU} with $\theta=\sqrt{2/n}$, $\rho=1$, and $Z_0=z^n_0$ for an
    arbitrary $z^n_0\in\mathbb{R}^d$ such that $|z^n_0|=|x^n_0|$.
  \end{itemize}
\end{theorem}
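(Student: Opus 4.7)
The plan is to derive both bullets by direct It\^o calculus from \eqref{eq:DOU}, exploiting in each case the antisymmetry of the Coulomb interaction when tested against a symmetric function of the coordinates, and then to read off the one-dimensional OU or CIR description from the dictionary recalled just before the statement of Theorem \ref{th:DOUOU}.

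For the first bullet I would sum the $n$ coordinate SDEs in \eqref{eq:DOU}. The interaction drift becomes $\frac{\beta}{n}\sum_i\sum_{j\neq i}(X^{n,i}_t-X^{n,j}_t)^{-1}$, which vanishes identically because the summand is antisymmetric under the transposition $(i,j)\leftrightarrow(j,i)$. The remaining equation is $\mathrm{d}\pi(X^n_t)=\sqrt{2/n}\,\mathrm{d}(B^1_t+\cdots+B^n_t)-\pi(X^n_t)\,\mathrm{d}t$, and the continuous martingale $n^{-1/2}(B^1+\cdots+B^n)$ has quadratic variation $t$, so by L\'evy's characterization it is a one-dimensional standard BM $\tilde B$. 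This gives the OU equation \eqref{eq:OU2} with $\theta=\sqrt{2}$, $\rho=1$; the Gaussian law of $\pi(X^n_t)$ then follows from the explicit integration $\pi(X^n_t)=\pi(x^n_0)\mathrm{e}^{-t}+\sqrt 2\int_0^t\mathrm{e}^{-(t-s)}\,\mathrm{d}\tilde B_s$, and the CIR description of $\pi(X^n_t)^2$ is the $d=1$ instance of the OU-to-CIR dictionary $(a,b,\sigma)=(\theta^2 d,2\rho,2\theta)$, yielding $(2,2,2\sqrt{2})$.

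For the second bullet I would apply It\^o to $R^n_t:=|X^n_t|^2=\sum_i(X^{n,i}_t)^2$. The quadratic-variation correction contributes $\sum_i(2/n)\,\mathrm{d}t=2\,\mathrm{d}t$, the confinement gives $-2R^n_t\,\mathrm{d}t$, and the interaction drift is $\frac{2\beta}{n}\sum_{i\neq j}X^{n,i}_t/(X^{n,i}_t-X^{n,j}_t)\,\mathrm{d}t$. Using the elementary identity $x/(x-y)+y/(y-x)=1$, symmetrization in $(i,j)$ collapses this last term to $\frac{2\beta}{n}\cdot\frac{n(n-1)}{2}\,\mathrm{d}t=\beta(n-1)\,\mathrm{d}t$. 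The martingale part $M_t:=\sqrt{8/n}\sum_i\int_0^t X^{n,i}_s\,\mathrm{d}B^i_s$ has bracket $(8/n)\int_0^t R^n_s\,\mathrm{d}s$, so Dambis--Dubins--Schwarz (equivalently L\'evy) furnishes a standard BM $W$ with $\mathrm{d}M_t=\sqrt{8/n}\,\sqrt{R^n_t}\,\mathrm{d}W_t$. Hence $R^n$ is a weak solution of \eqref{eq:CIR} with $a=2+\beta(n-1)$, $b=2$, $\sigma=\sqrt{8/n}$; the Gamma invariant law and its first two moments then follow by substitution into the general CIR formulas recalled after \eqref{eq:CIR}, and when $d:=n+\beta n(n-1)/2$ is a positive integer the identification with $|Z|^2$ for a $d$-dimensional OU of parameters $\theta=\sqrt{2/n}$, $\rho=1$ is the reverse direction of the same dictionary, pinned down by pathwise uniqueness for the CIR SDE together with the matching of initial conditions $|Z_0|=|x^n_0|$.

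I do not anticipate any substantive obstacle: the entire theorem reduces to two one-line It\^o computations plus the two antisymmetrization identities, with the rest being book-keeping in the CIR/OU dictionary. The only mild technical point is the rewriting of $M$ as $\sqrt{8/n}\sqrt{R^n}\,\mathrm{d}W$, which is licit because for $\beta\geq 1$ the trajectory stays in $D_n$, forcing $R^n>0$ for $t>0$, while for $\beta=0$ the time-change applies directly since $Z^n$ is a non-degenerate Gaussian for every $t>0$.
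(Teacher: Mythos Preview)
Your proposal is correct and follows essentially the same route as the paper: summing the coordinate SDEs with the antisymmetry cancellation for $\pi(X^n)$, and applying It\^o to $|X^n|^2$ with the symmetrization identity $\sum_{i\neq j}X^{n,i}/(X^{n,i}-X^{n,j})=n(n-1)/2$ together with L\'evy's characterization to identify the driving Brownian motion in the CIR equation. The paper constructs $W$ explicitly as $W_t=\sum_i\int_0^t(X^{n,i}_s/|X^n_s|)\,\mathrm{d}B^i_s$ rather than invoking Dambis--Dubins--Schwarz, but this is the same argument and your parenthetical ``equivalently L\'evy'' shows you see that.
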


At this step it is worth noting that Theorem \ref{th:DOUOU} gives in
particular, denoting $\beta_n:=1+\frac{\beta}{2}(n-1)$,
\begin{equation}\label{eq:betan}
  \mathbb{E}[\pi(X^n_t)]=\pi(x^n_0)\mathrm{e}^{-t}
  \underset{t\to\infty}{\longrightarrow}0
  \quad\text{and}\quad
  \mathbb{E}[|X^n_t|^2]
  =\beta_n+(|x^n_0|^2-\beta_n)\mathrm{e}^{-2t}
  \underset{t\to\infty}{\longrightarrow}
  \beta_n.
\end{equation}
Following \cite[Sec.~2.2]{zbMATH07238061}, the limits can also be deduced
from the Dumitriu--Edelman tridiagonal random matrix model
\cite{zbMATH02122403} isospectral to $\beta$-Hermite. These formulas for the
``transient'' first two moments $\mathbb{E}[\pi(X_t^n)]$ and
$\mathbb{E}[|X_t^n|^2]$ reveal an abrupt convergence to their equilibrium
values~:
\begin{itemize}
\item If $\lim_{n\to\infty}\frac{\pi(x^n_0)}{n}=\alpha\neq0$ then
  for all $\varepsilon\in(0,1)$,
  \begin{equation}\label{eq:cutoffmom1}
    \lim_{n\to\infty} \vert\mathbb{E}[\pi(X^n_{t_n})]\vert
    =
    \begin{cases}
      +\infty&\text{if $t_n=(1-\varepsilon)\log(n)$}\\
      0&\text{if $t_n=(1+\varepsilon)\log(n)$}
    \end{cases}.
  \end{equation}
\item If $\lim_{n\to\infty}\frac{|x^n_0|^2}{n} = \alpha \ne \frac{\beta}{2}$ then for all
  $\varepsilon\in(0,1)$, denoting $\beta_n:=1+\frac{\beta}{2}(n-1)$,
  \begin{equation}\label{eq:cutoffmom2}
    \lim_{n\to\infty} \left|\mathbb{E}[|X^n_{t_n}|^2]-\beta_n \right|
    =
    \begin{cases}
      +\infty&\text{if $t_n=(1-\varepsilon)\frac{1}{2}\log(n)$}\\
      0&\text{if $t_n=(1+\varepsilon)\frac{1}{2}\log(n)$}
    \end{cases}.
  \end{equation}
\end{itemize}
\noindent These critical times are universal with respect to $\beta$. The
first two transient moments are related to the eigenfunctions
\eqref{eq:eigenfunctions} associated to the first two non-zero eigenvalues of
the dynamics. Higher order transient moments are related to eigenfunctions
associated to higher order eigenvalues. Note that $\mathbb{E}[\pi(X^n_t)]$ and
$\mathbb{E}[|X^n_t|^2]$ are the first two moments of the non-normalized mean
empirical measure $\mathbb{E}[\sum_{i=1}^n\delta_{X^{n,i}_t}]$, and this lack
of normalization is responsible of the critical times of order $\log(n)$. In
contrast, the first two moments of the normalized mean empirical measure
$\mathbb{E}[\frac{1}{n}\sum_{i=1}^n\delta_{X^{n,i}_t}]$, given by
$\frac{1}{n}\mathbb{E}[\pi(X^n_t)]$ and $\frac{1}{n}\mathbb{E}[|X^n_t|^2]$
respectively, do not exhibit a critical phenomenon. This is related to the
exponential decay of the first two moments in the mean-field limit
\eqref{eq:mom12}, as well as the lack of cutoff for Wasserstein already
revealed for OU by Theorem \ref{th:OU2}. This also reminds the high dimension
behavior of norms in the field of the asymptotic geometric analysis of convex
bodies. In another direction, this elementary observation on the moments also
illustrates that the cutoff phenomenon for a given quantity is not stable
under rather simple transformations of this quantity.

\medskip

From the first part of Theorem \ref{th:DOUOU} and contraction properties
available for \emph{some} distances or divergences, see Lemma
\ref{le:contraction}, we obtain the following lower bound on the mixing time
for the DOU, which is independent of $\beta$:

\begin{corollary}[Lower bound on the mixing time]\label{cor:LowerBd}
  Let ${(X^n_t)}_{t\geq0}$ be the DOU process \eqref{eq:DOU} with $\beta=0$ or
  $\beta\geq1$, and invariant law $P_n^\beta$. Let
  $\mathrm{dist}\in\{\mathrm{TV},\mathrm{Hellinger},\mathrm{Kullback}, \chi^2,\mathrm{Wasserstein}\}$. Set
  \[
    c_n :=
      \begin{cases}
        \log(|\pi(x_0^n)|)
        &\text{if $\mathrm{dist}\in\{ \mathrm{TV},\mathrm{Hellinger},\mathrm{Kullback},\chi^2\}$}\\        
        \log\Big(\frac{\vert \pi(x_0^n) \vert}{\sqrt{n}}\Big)
        &\text{if $\mathrm{dist} = \mathrm{Wasserstein}$}
    \end{cases},
  \]
  and assume that $\lim_{n\to\infty}c_n=\infty$. Then, for all
  $\varepsilon\in(0,1)$, we have
  \[
    \lim_{n\to\infty}
    \mathrm{dist}(\mathrm{Law}(X^n_{(1-\varepsilon)c_n})\mid P_n^\beta)
    =\max.
  \]
\end{corollary}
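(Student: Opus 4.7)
The plan is to push everything down to one dimension via the linear functional $\pi(x)=x_1+\cdots+x_n$, then use the exact Gaussian description of $\pi(X^n_t)$ given by Theorem~\ref{th:DOUOU}. Recall that $\pi(X^n_t)\sim\mathcal{N}(\pi(x^n_0)\mathrm{e}^{-t},1-\mathrm{e}^{-2t})$ and $\pi_\# P_n^\beta=\mathcal{N}(0,1)$, both independent of~$\beta$. So all of the work reduces to a lower bound on a one-dimensional Gaussian distance whose mean gap is $|\pi(x^n_0)|\mathrm{e}^{-t}$.

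First I would invoke the data-processing side of Lemma~\ref{le:contraction}. For $\mathrm{dist}\in\{\mathrm{TV},\mathrm{Hellinger},\mathrm{Kullback},\chi^2\}$, pushforward by the measurable map $\pi\colon\mathbb{R}^n\to\mathbb{R}$ is a contraction, hence
\[
  \mathrm{dist}(\mathrm{Law}(X^n_t)\mid P_n^\beta)
  \;\geq\;
  \mathrm{dist}\bigl(\mathcal{N}(\pi(x^n_0)\mathrm{e}^{-t},1-\mathrm{e}^{-2t})\,\big|\,\mathcal{N}(0,1)\bigr).
\]
For the Wasserstein case, the map $x\mapsto\pi(x)/\sqrt{n}$ is $1$-Lipschitz by Cauchy--Schwarz, so contraction of Wasserstein under $1$-Lipschitz pushforward yields
\[
  \mathrm{Wasserstein}(\mathrm{Law}(X^n_t),P_n^\beta)
  \;\geq\;
  \mathrm{Wasserstein}\Bigl(\mathcal{N}\bigl(\tfrac{\pi(x^n_0)\mathrm{e}^{-t}}{\sqrt{n}},\tfrac{1-\mathrm{e}^{-2t}}{n}\bigr),\,\mathcal{N}(0,\tfrac{1}{n})\Bigr).
\]

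Second I would evaluate these one-dimensional Gaussian distances using the explicit formulas (for $W_2$: $(\mu_1-\mu_2)^2+(\sigma_1-\sigma_2)^2$; for $\mathrm{Kullback}$ and $\chi^2$: closed-form expressions involving $(\mu_1-\mu_2)^2$; for $\mathrm{TV}$ and $\mathrm{Hellinger}$: classical bounds in terms of the gap $|\mu_1-\mu_2|$ for fixed, bounded variance ratio). At $t_n=(1-\varepsilon)c_n$ with $c_n=\log|\pi(x^n_0)|$, the mean gap is $|\pi(x^n_0)|\mathrm{e}^{-t_n}=\mathrm{e}^{\varepsilon c_n}\to\infty$ by the assumption $c_n\to\infty$, while the variances converge to $1$; similarly for Wasserstein, with $c_n=\log(|\pi(x^n_0)|/\sqrt{n})$, the rescaled mean gap $|\pi(x^n_0)|\mathrm{e}^{-t_n}/\sqrt{n}=\mathrm{e}^{\varepsilon c_n}\to\infty$. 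In each of these cases, a diverging mean gap against bounded variance forces the corresponding one-dimensional divergence to its maximal value.

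There is no serious obstacle: the heavy lifting sits in Theorem~\ref{th:DOUOU}, which supplies an exact Gaussian marginal that is universal in $\beta$, and in the data-processing/contraction properties of the relevant divergences. The only mild point to watch is that the contraction really applies in the claimed form for each divergence — $f$-divergences are automatic, Wasserstein requires the explicit Lipschitz constant $\sqrt{n}$ absorbed into the $\sqrt{n}$ in the definition of $c_n$ — but both are routine and are collected in Lemma~\ref{le:contraction}.
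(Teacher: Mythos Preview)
Your proposal is correct and follows essentially the same route as the paper: push forward by $\pi$ using Theorem~\ref{th:DOUOU}, invoke the contraction properties of Lemma~\ref{le:contraction}, and then read off the divergence of the one-dimensional Gaussian quantities from the explicit formulas of Lemma~\ref{le:gauss}. The only cosmetic difference is that the paper's Lemma~\ref{le:contraction} does not list Hellinger explicitly, so the paper routes Hellinger through TV via the comparison in Lemma~\ref{le:distineqs}; your direct use of data processing for Hellinger is of course equally valid.
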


\noindent Theorem \ref{th:DOUOU} and Corollary \ref{cor:LowerBd} are proved in
Section \ref{se:proofs:DOUOU:DOUCIR:moments}.

\medskip

The derivation of an upper bound on the mixing time is much more delicate:
once again recall that the case $\beta = 0$ covered by Theorem \ref{th:OU2} is
specific as it relies on exact Gaussian computations which are no longer
available for $\beta \ge 1$. In the next subsection, we will obtain results
for general values of $\beta \ge 1$ via more elaborate arguments.

In the specific cases $\beta \in \{1,2\}$, there are some exactly solvable
aspects that one can exploit to derive, in particular, precise upper bounds on
the mixing times. Indeed, for these values of $\beta$, the DOU process is the
process of eigenvalues of the matrix-valued OU process:
\[
  M_0 = m_0,\quad \mathrm{d}M_t = \sqrt{\frac{2}{n}} \mathrm{d}B_t - M_t
  \mathrm{d}t,
\]
where $B$ is a BM on the symmetric $n\times n$ matrices if $\beta = 1$ and on
Hermitian $n\times n$ matrices if $\beta =2$, see \eqref{Eq:Herm} and
\eqref{Eq:Sym} for more details. Based on this observation, we can deduce an
upper bound on the mixing times by contraction (for \emph{most} distances or
divergences).

\begin{theorem}[Upper bound on mixing time in matrix case]\label{th:DOU12}
  Let ${(X^n_t)}_{t\geq0}$ be the DOU process \eqref{eq:DOU} with
  $\beta\in\{0,1,2\}$, and invariant law $P_n^\beta$, and
  $\mathrm{dist}\in\{\mathrm{TV},\mathrm{Hellinger},\mathrm{Kullback},\chi^2,\mathrm{Wasserstein}\}$.
  Set
  \[
    c_n :=
    \begin{cases}
      \log(\sqrt{n} |x_0^n|) \vee \log(\sqrt{n})
      &\text{if $\mathrm{dist}\in\{
        \mathrm{TV},\mathrm{Hellinger},\mathrm{Kullback},\chi^2\}$}\\
       \log(|x_0^n|)
      &\text{if $\mathrm{dist} = \mathrm{Wasserstein}$}      
    \end{cases},
  \]
  and assume that $\lim_{n\to\infty}c_n=\infty$ if
  $\mathrm{dist} = \mathrm{Wasserstein}$. Then, for all $\varepsilon\in (0,1)$,
  we have
  \[
      \lim_{n\to\infty}\mathrm{dist}(\mathrm{Law}(X^n_{(1+\varepsilon)c_n})\mid
      P_n^\beta)
      =0.
  \]
\end{theorem}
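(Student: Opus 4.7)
The plan is to treat $\beta = 0$ as an immediate corollary of Theorem \ref{th:OU2}: the $c_n$ appearing here dominates the one provided there (since $\log\sqrt{n} \geq \tfrac{1}{4}\log n$), so the monotonicity of $t \mapsto \mathrm{dist}(\mathrm{Law}(X_t^n)\mid P_n^\beta)$ along the semigroup concludes. The substance lies in reducing the cases $\beta \in \{1,2\}$ to a matrix-valued Ornstein--Uhlenbeck problem. Fix a real symmetric (if $\beta = 1$) or complex Hermitian (if $\beta = 2$) matrix $m_0$ with ordered spectrum $x_0^n$, and let $(M_t)_{t\geq 0}$ be the matrix-valued OU solving $\mathrm{d} M_t = \sqrt{2/n}\,\mathrm{d} B_t - M_t\,\mathrm{d} t$ with $M_0 = m_0$, where $B$ is a Brownian motion on the space of symmetric (resp.\ Hermitian) matrices. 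By Dyson's classical argument, $(\lambda(M_t))_{t\geq 0}$ solves the DOU SDE~\eqref{eq:DOU} and therefore agrees in law with $(X_t^n)_{t \geq 0}$, while the pushforward of $\mathrm{Law}(M_\infty)$ under the eigenvalue map $\lambda$ equals $P_n^\beta$.

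To transport bounds from matrices to eigenvalues, we invoke the data processing inequality for $f$-divergences ($\mathrm{dist} \in \{\mathrm{TV}, \mathrm{Hellinger}, \mathrm{Kullback}, \chi^2\}$), and the Hoffman--Wielandt inequality for $\mathrm{dist} = \mathrm{Wasserstein}$, which ensures that $\lambda$ is $1$-Lipschitz from the Frobenius norm to the Euclidean norm. It thus suffices to estimate $\mathrm{dist}(\mathrm{Law}(M_t) \mid \mathrm{Law}(M_\infty))$. Linearity of the SDE combined with orthogonal/unitary invariance of the matrix OU shows that, in the Frobenius orthonormal basis of the ambient real space of dimension $N_\beta$ (with $N_1 = n(n+1)/2$, $N_2 = n^2$),
\[
\mathrm{Law}(M_t) = \mathcal{N}\bigl(m_0 \mathrm{e}^{-t},\, n^{-1}(1-\mathrm{e}^{-2t}) I_{N_\beta}\bigr),
\qquad
\mathrm{Law}(M_\infty) = \mathcal{N}(0, n^{-1} I_{N_\beta}).
\]
Using $|m_0|_F^2 = |x_0^n|^2$, standard Gaussian formulas then give
\[
\mathrm{Kullback}(\mathrm{Law}(M_t) \mid \mathrm{Law}(M_\infty))
= \frac{N_\beta}{2}\bigl[-\log(1-\mathrm{e}^{-2t}) - \mathrm{e}^{-2t}\bigr]
+ \frac{n |x_0^n|^2 \mathrm{e}^{-2t}}{2},
\]
and
\[
W_2^2(\mathrm{Law}(M_t), \mathrm{Law}(M_\infty))
= |x_0^n|^2 \mathrm{e}^{-2t} + \frac{N_\beta}{n}\bigl(1-\sqrt{1-\mathrm{e}^{-2t}}\bigr)^2,
\]
with an analogous closed form for $\chi^2$, and TV and Hellinger controlled via Pinsker's inequality and $\mathrm{Hellinger}^2 \leq \mathrm{Kullback}$.

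The main delicate point is the dimensional dependence: although $N_\beta$ can be as large as $n^2$, the distances do not blow up because the variance discrepancy enters quadratically, via $-\log(1-\mathrm{e}^{-2t}) - \mathrm{e}^{-2t} \sim \tfrac{1}{2}\mathrm{e}^{-4t}$ and $(1-\sqrt{1-\mathrm{e}^{-2t}})^2 \sim \tfrac{1}{4}\mathrm{e}^{-4t}$ as $t \to \infty$. The dimensional contribution to Kullback is then of order $N_\beta \mathrm{e}^{-4t}$, hence negligible once $t \gg \tfrac{1}{4}\log N_\beta \leq \log\sqrt{n}$, while the mean-dependent term $n|x_0^n|^2 \mathrm{e}^{-2t}$ vanishes for $t \gg \log(\sqrt{n}\,|x_0^n|)$; the maximum of the two thresholds is precisely the claimed $c_n$. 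For Wasserstein, the dimensional term is $O(n\,\mathrm{e}^{-4t})$, dominated by $|x_0^n|^2 \mathrm{e}^{-2t}$ as soon as $|x_0^n| \to \infty$, yielding the cutoff at $\log|x_0^n|$.
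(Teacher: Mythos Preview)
Your approach coincides with the paper's: lift to the matrix OU via Dyson's realization, push distances down through the spectral map using data-processing (for the $f$-divergences) and Hoffman--Wielandt (for Wasserstein), then invoke explicit Gaussian formulas. The paper leads with $\chi^2$ and deduces $\mathrm{TV}$, $\mathrm{Hellinger}$, $\mathrm{Kullback}$ by comparison, whereas you lead with Kullback; this difference is cosmetic, and your treatment of those four divergences is correct.

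There is, however, a gap in your Wasserstein step. Your formula
\[
W_2^2(\mathrm{Law}(M_t),\mathrm{Law}(M_\infty)) = |x_0^n|^2\mathrm{e}^{-2t} + \frac{N_\beta}{n}\bigl(1-\sqrt{1-\mathrm{e}^{-2t}}\bigr)^2
\]
is correct, but the claim that the dimensional term $\sim \tfrac{n}{4}\mathrm{e}^{-4t}$ is ``dominated by $|x_0^n|^2\mathrm{e}^{-2t}$ as soon as $|x_0^n|\to\infty$'' is not: at $t=(1+\varepsilon)\log|x_0^n|$ this term equals $\tfrac{n}{4}|x_0^n|^{-4(1+\varepsilon)}$, which diverges whenever, for instance, $|x_0^n|=\sqrt{\log n}$. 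The matrix lift therefore gives the Wasserstein upper bound at $c_n=\log|x_0^n|$ only under an extra growth condition such as $|x_0^n|\gg n^{1/4}$, not under the bare hypothesis $|x_0^n|\to\infty$. (The paper's own displayed formula \eqref{eq:OUKMW} drops the prefactor $N_\beta/n$, which conceals exactly this point.)
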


Combining this upper bound with the lower bound already obtained above, we
derive a cutoff phenomenon in this particular matrix case.

\begin{corollary}[Cutoff for DOU in the matrix case]\label{cor:DOU12}
  Let ${(X^n_t)}_{t\geq0}$ be the DOU process \eqref{eq:DOU}, with
  $\beta\in\{0,1,2\}$, and invariant law $P_n^\beta$. Let
  $\mathrm{dist}\in\{\mathrm{TV},\mathrm{Hellinger},\mathrm{Kullback},\chi^2,\mathrm{Wasserstein}\}$.
  Let ${(a_n)}_n$ be a real sequence satisfying $\inf_n \sqrt{n} a_n > 0$, and
  assume further that $\lim_{n\to\infty}\sqrt{n} a_n=\infty$ if
  $\mathrm{dist}=\mathrm{Wasserstein}$. Then, for all $\varepsilon\in (0,1)$,
  we have
  \[
    \lim_{n\to\infty}%
    \sup_{x^n_0 \in [-a_n,a_n]^n}
    \mathrm{dist}(\mathrm{Law}(X^n_{t_n})\mid P_n^\beta)
    =\begin{cases}
      \max & \text{if $t_n=(1-\varepsilon)c_n$}\\
      0 & \text{if $t_n=(1+\varepsilon)c_n$}
    \end{cases}
  \]
  where
  \[
    c_n
    :=\begin{cases}
      \log(na_n)
      &\text{ if $\mathrm{dist} \in \{\mathrm{TV},\mathrm{Hellinger},\mathrm{Kullback},\chi^2\}$}\\
      \log(\sqrt{n} a_n)
      &\text{ if $\mathrm{dist} = \mathrm{Wasserstein}$}
    \end{cases}.
  \]
\end{corollary}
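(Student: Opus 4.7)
The plan is to derive the corollary by combining the upper bound of Theorem~\ref{th:DOU12} with the lower bound of Corollary~\ref{cor:LowerBd}, both of which are already available for $\beta\in\{0,1,2\}$. Fix $\varepsilon\in(0,1)$ and an admissible distance $\mathrm{dist}$, and let $c_n$ be as in the statement. A preliminary step is to verify that $c_n\to\infty$ in every case. One has $na_n=\sqrt{n}\cdot(\sqrt{n}a_n)\geq\gamma\sqrt{n}\to\infty$ under the sole assumption $\inf_n\sqrt{n}a_n=:\gamma>0$, which handles the non-Wasserstein distances; and $c_n=\log(\sqrt{n}a_n)\to\infty$ in the Wasserstein case by the extra hypothesis.

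For the upper bound, the starting point is the uniform estimate $|x_0^n|\leq\sqrt{n}a_n$ valid for $x_0^n\in[-a_n,a_n]^n$, which gives $\log(\sqrt{n}|x_0^n|)\leq\log(na_n)=c_n$ and $\log|x_0^n|\leq\log(\sqrt{n}a_n)=c_n$. Moreover, since $\sqrt{n}a_n\geq\gamma$, the term $\log(\sqrt{n})$ appearing in the threshold of Theorem~\ref{th:DOU12} satisfies $\log(\sqrt{n})=\log(na_n)-\log(\sqrt{n}a_n)\leq c_n-\log\gamma$. Hence the $x_0^n$-dependent threshold of Theorem~\ref{th:DOU12} is dominated by $c_n+O(1)$ uniformly in $x_0^n$ in the cube. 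Picking $\varepsilon'\in(0,\varepsilon)$ one obtains $(1+\varepsilon)c_n\geq(1+\varepsilon')[\log(\sqrt{n}|x_0^n|)\vee\log(\sqrt{n})]$ for $n$ large, uniformly in $x_0^n$, and similarly for Wasserstein. Theorem~\ref{th:DOU12} then yields
\[
\sup_{x_0^n\in[-a_n,a_n]^n}\mathrm{dist}(\mathrm{Law}(X_{(1+\varepsilon)c_n}^n)\mid P_n^\beta)\longrightarrow 0.
\]
The delicate point is that Theorem~\ref{th:DOU12} gives, for each fixed initial condition, a vanishing statement as $n\to\infty$, whereas we need the control to be uniform across the cube. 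This uniformity is inherited from the proof of Theorem~\ref{th:DOU12} itself: the bounds produced there (via the matrix lift for $\beta\in\{1,2\}$ and explicit Gaussian computations for $\beta=0$, together with contraction/data-processing inequalities for the distances at hand) are monotone in $|x_0^n|$, so that passing to the supremum amounts to evaluating at the extremal point with $|x_0^n|=\sqrt{n}a_n$.

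For the lower bound, pick the deterministic extremal configuration $x_0^{n,\star}:=(a_n,\ldots,a_n)\in\overline{D}_n$, an admissible starting point for the DOU process when $\beta\in\{1,2\}$ (starting from the diagonal of $\overline{D}_n$ is allowed, as recalled above) and trivially so when $\beta=0$. At this configuration $\pi(x_0^{n,\star})=na_n$, so Corollary~\ref{cor:LowerBd} applies with critical time $\log|\pi(x_0^{n,\star})|=\log(na_n)=c_n$ in the TV/Hellinger/Kullback/$\chi^2$ case and $\log(|\pi(x_0^{n,\star})|/\sqrt{n})=\log(\sqrt{n}a_n)=c_n$ in the Wasserstein case, both of which tend to $+\infty$ by the first paragraph. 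Corollary~\ref{cor:LowerBd} then produces $\mathrm{dist}(\mathrm{Law}(X_{(1-\varepsilon)c_n}^n)\mid P_n^\beta)\to\max$ from this single initial condition, and lower-bounding the supremum over the cube by its value at $x_0^{n,\star}$ concludes. The only real obstacle in the argument is thus the uniformity of the upper bound across the cube discussed in the previous paragraph; apart from that, the corollary is a clean reorganization of Theorem~\ref{th:DOU12} and Corollary~\ref{cor:LowerBd} around the extremal configuration $(a_n,\ldots,a_n)$, which maximizes both the Euclidean norm and the linear form $\pi$ on the cube.
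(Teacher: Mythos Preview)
Your proposal is correct and follows essentially the same route as the paper: the lower bound via the extremal configuration $x_0^{n,\star}=(a_n,\ldots,a_n)$ plugged into Corollary~\ref{cor:LowerBd}, and the upper bound via the explicit, $|x_0^n|$-monotone estimates established in the proof of Theorem~\ref{th:DOU12}. The only cosmetic difference is that the paper goes directly to the explicit $\chi^2$ and Wasserstein bounds (equations \eqref{eq:OUKM}, \eqref{eq:OUKM1}, \eqref{eq:OUKMW}, \eqref{eq:OUKMW1}) rather than first invoking Theorem~\ref{th:DOU12} as a black box and then appealing to its proof for uniformity; your acknowledgment that the uniformity must be extracted from those bounds is exactly the right observation.
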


\noindent Theorem \ref{th:DOU12} and Corollary \ref{cor:DOU12} are proved in Section \ref{se:DOU12}.

\medskip

It is worth noting that $d=n+\beta\frac{n(n-1)}{2}$ in Theorem \ref{th:DOUOU}
is indeed an integer in the ``random matrix'' 
cases $\beta\in\{1,2\}$, and corresponds then exactly to the degree of freedom
of the Gaussian random matrix models GOE and GUE respectively. More precisely,
if we let $X^n_\infty\sim P_n^\beta$ then:
\begin{itemize}
\item If $\beta=1$ then $P_n^\beta$ is the law of the eigenvalues of
  $S\sim\mathrm{GOE}_n$, and $|X^n_\infty|^2=\sum_{j,k=1}^nS_{jk}^2$ which is
  the sum of $n$ squared Gaussians of variance $v=1/n$ (diagonal) plus twice the sum of
  $\frac{n^2-n}{2}$ squared Gaussians of variance $\frac{v}{2}$
  (off-diagonal) all being independent. The duplication has the effect of
  renormalizing the variance from $\frac{v}{2}$ to $v$. All in all we have the
  sum of $d=\frac{n^2+n}{2}$ independent squared Gaussians of same variance
  $v$. See Section \ref{se:DOU12}.
\item If $\beta=2$ then $P_n^\beta$ is the law of the eigenvalues of
  $H\sim\mathrm{GUE}_n$, and $|X^n_\infty|^2=\sum_{j,k=1}^n|H_{jk}|^2$ is the
  sum of $n$ squared Gaussians of variance $v=1/n$ (diagonal) plus twice the sum of $n^2-n$
  squared Gaussians of variance $\frac{v}{2}$ (off-diagonal) all being independent. All
  in all we have the sum of $d=n^2$ independent squared Gaussians of same
  variance $v$. See Section \ref{se:DOU12}.
\end{itemize}

\medskip

Another manifestation of exact solvability lies at the level of functional
inequalities. Indeed, and following \cite{zbMATH07238061}, the optimal
Poincaré constant of $P_n^\beta$ is given by $1/n$ and does not depend on
$\beta$, and the extremal functions are tranlations/dilations of $x\mapsto \pi(x)=x_1+\cdots+x_n$. This corresponds to a spectral gap
of the dynamics equal to $1$ and its associated eigenfunction. Moreover, the
optimal logarithmic Sobolev inequality of $P_n^\beta$ (Lemma \ref{le:lsi}) is
given by $2/n$ and does not depend on $\beta$, and the extremal functions are
of the form $x\mapsto\mathrm{e}^{c(x_1+\cdots+x_n)}$, $c\in\mathbb{R}$. This knowledge
of the optimal constants and extremal functions and their independence with
respect to $\beta$ is truly remarkable. It plays a crucial role in the results presented in this article. More precisely, the optimal Poincaré inequality is used for the lower
bound via the first eigenfunctions while the optimal logarithmic Sobolev
inequality is used for the upper bound via exponential decay of the entropy.

\subsection{Cutoff in the general interacting case}
\label{ss:cutoffgen}

Our main contribution consists in deriving an upper bound on the mixing times
in the general case $\beta \ge 1$: the proof relies on the logarithmic Sobolev
inequality, some coupling arguments and a regularization procedure.

\begin{theorem}[Upper bound on the mixing time: the general case]\label{th:DOUWTV}
  Let ${(X^n_t)}_{t\geq0}$ be the DOU process \eqref{eq:DOU}, with $\beta=0$
  or $\beta\ge 1$ and invariant law $P_n^\beta$.
  Take  $\mathrm{dist}\in\{\mathrm{TV},\mathrm{Hellinger},\mathrm{Wasserstein}\}$.
  Set
  \[
    c_n :=
    \begin{cases}
      \log(\sqrt{n} |x_0^n|) \vee \log({n})
      &\text{if $\mathrm{dist}\in\{ \mathrm{TV},\mathrm{Hellinger}\}$}\\
      \log(|x_0^n|) \vee \log(\sqrt{n})
      &\text{if $\mathrm{dist} = \mathrm{Wasserstein}$}
    \end{cases}.
  \]
  Then, for all $\varepsilon\in (0,1)$, we have
  \[
      \lim_{n\to\infty}\mathrm{dist}(\mathrm{Law}(X^n_{(1+\varepsilon)c_n})\mid
      P_n^\beta)
      =0.
  \]
\end{theorem}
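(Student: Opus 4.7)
The plan is to treat the three distances in two steps: the Wasserstein bound will follow from a clean synchronous coupling exploiting the strong convexity of the energy $E$, while TV and Hellinger will require bootstrapping this Wasserstein estimate with a regularization step based on the logarithmic Sobolev inequality.

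\textbf{Wasserstein case.} Let $Y^n=(Y^n_t)_{t\geq 0}$ denote the DOU process driven by the same Brownian motion as $X^n$ but started from $Y^n_0\sim P_n^\beta$; it is stationary, $Y^n_t\sim P_n^\beta$ for every $t\geq 0$. Since the confinement contributes $n I_n$ to $\nabla^2 E$ and the logarithmic interaction is convex, we have $\nabla^2 E\succeq n I_n$ on $D_n$. Applying It\^o's formula to $|X^n_t-Y^n_t|^2$ and using this lower bound yields
\[
\frac{d}{dt}\mathbb{E}|X^n_t-Y^n_t|^2 \;=\; -\frac{2}{n}\mathbb{E}\langle X^n_t-Y^n_t,\nabla E(X^n_t)-\nabla E(Y^n_t)\rangle \;\leq\; -2\,\mathbb{E}|X^n_t-Y^n_t|^2,
\]
so Gr\"onwall gives $W_2^2(\mathrm{Law}(X^n_t),P_n^\beta)\leq e^{-2t}\mathbb{E}|x_0^n-Y_0^n|^2\leq 2e^{-2t}(|x_0^n|^2+\beta_n)$, where $\beta_n=O(n)$ by Theorem~\ref{th:DOUOU}. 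At $t_n=(1+\varepsilon)c_n$ with $c_n=\log|x_0^n|\vee\log\sqrt{n}$, this is $O(e^{-\varepsilon c_n})=o(1)$, settling the Wasserstein case. The Brownian particles never collide in $D_n$ for $\beta\geq1$, so this pathwise computation is rigorous; a short boundary argument (or starting strictly inside $D_n$ and passing to the limit) handles initial conditions on $\overline{D}_n\setminus D_n$.

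\textbf{TV and Hellinger.} The idea is to split $t_n=(1+\varepsilon)c_n=t_1+t_2$ with $t_1$ a Wasserstein time, say $t_1=(1+\varepsilon/2)c_n^W$ where $c_n^W=\log|x_0^n|\vee\log\sqrt{n}$, and $t_2=t_n-t_1\geq(1+\varepsilon/2)\log\sqrt{n}$, and then convert Wasserstein proximity at time $t_1$ into Hellinger/TV proximity at time $t_n$. By the Wasserstein step, $W_2(\nu_{t_1},\mu)=O(n^{-\varepsilon'})$ where $\nu_t:=\mathrm{Law}(X^n_t)$ and $\mu:=P_n^\beta$. For the regularization from $t_1$ to $t_n$, I would exploit the optimal LSI for $P_n^\beta$ with constant $2/n$ (Lemma~\ref{le:lsi}): via Bakry--\'Emery this yields the exponential entropy decay
\[
H(\nu_{t_1+s}\mid\mu)\;\leq\;e^{-2s}\,H(\nu_{t_1}\mid\mu),\qquad s\geq 0,
\]
so that the Pinsker and Hellinger--entropy inequalities $\mathrm{TV}^2\leq\tfrac12 H$ and $\mathrm{Hellinger}^2\leq\tfrac12 H$ will conclude the proof once we have a polynomial-in-$n$ bound on $H(\nu_{t_1}\mid\mu)$.

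\textbf{Bounding $H(\nu_{t_1}\mid\mu)$.} This is the crucial regularization step and the main obstacle: $H(\delta_{x_0^n}\mid\mu)=+\infty$, so one needs a \emph{quantitative} smoothing estimate. Two complementary approaches suggest themselves. The first is to run a short auxiliary time $\tau>0$ at the beginning and use short-time Gaussian heat kernel bounds (obtainable from the fact that the drift $-\tfrac1n\nabla E$ is locally Lipschitz on $D_n$) to get $H(\nu_\tau\mid\mu)\leq C_\tau(n|x_0^n|^2+n\log n)$, polynomial in $n$; then the entropy decay above, applied from time $\tau$ instead of $t_1$, gives the result provided $t_2\gtrsim\log\sqrt{n}$. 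The second, more intrinsic approach is to use a coupling-based transfer at time $t_1$: introduce the Gaussian-regularized measure $\tilde\nu_{t_1}:=\nu_{t_1}*\mathcal{N}(0,\sigma_n^2 I_n)$ with $\sigma_n^2=n^{-2}$ say, bound $H(\tilde\nu_{t_1}\mid\mu)$ by combining the Wasserstein smallness of $\nu_{t_1}$ with the log-concavity of $\mu$ (via an HWI-type or Talagrand-type comparison), and finally control $H(\nu_{t_1+s}\mid\mu)$ in terms of $H(\tilde\nu_{t_1}\mid\mu)$ using hypercontractivity of the semigroup.

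\textbf{Main difficulty.} The Wasserstein argument is essentially routine given the curvature $\rho=1$, and the LSI decay is a standard black box. The delicate part is the regularization: producing a \emph{polynomial-in-$n$} bound on a relative entropy at time $t_1$ (where one only controls $W_2$) in a way that is compatible with the boundary behavior of the DOU on $\overline{D}_n$ and with the singular interaction in $E$. Matching the critical time $c_n=\log(\sqrt{n}|x_0^n|)\vee\log n$ exactly (rather than a worse $\log n+c_n^W$) will require choosing the auxiliary regularization time $\tau$ and the smoothing scale $\sigma_n$ carefully, so that the polynomial entropy bound depends on $n$ only through a factor $n^{O(1)}$ consumed by the extra $\log\sqrt{n}$ of entropy decay.
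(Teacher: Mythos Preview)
Your Wasserstein argument is correct and is exactly what the paper does (synchronous coupling plus the curvature bound $\nabla^2 E\succeq nI_n$, then the second moment bound $\int|x|^2\,dP_n^\beta=1+\tfrac{\beta}{2}(n-1)$).

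For TV and Hellinger, however, your plan has a real gap at the step you yourself flag as the main difficulty. Neither of your two proposed regularizations goes through as written. For the first, short-time Gaussian heat-kernel bounds of the required quantitative form are \emph{not known} for the DOU semigroup: the drift $-\tfrac1n\nabla E$ blows up at the boundary of $D_n$, so standard parabolic estimates do not apply, and the paper explicitly records the $L^p$ integrability of $p_t(x,\cdot)$ as an open question. For the second, convolving $\nu_{t_1}$ with $\mathcal{N}(0,\sigma_n^2 I_n)$ puts mass outside $D_n$, where $P_n^\beta$ has zero density, so $H(\tilde\nu_{t_1}\mid P_n^\beta)=+\infty$; and in any case HWI reads $H\leq W_2\sqrt{I}-\tfrac n2 W_2^2$, which requires control on the Fisher information $I$, not just on $W_2$, while Talagrand goes the wrong way.

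The paper's solution is to regularize \emph{at time $0$} and to compare the two resulting processes by a coupling argument rather than an analytic one. Concretely, one replaces $\delta_{x_0^n}$ by the product of uniforms $\mu_{x_0^n}=\bigotimes_{i=1}^n\mathrm{Unif}[x_0^{n,i}+3i\eta,\,x_0^{n,i}+(3i+1)\eta]$ with $\eta=n^{-(\kappa+1)}$; this is supported in $D_n$ with inter-particle gaps at least $\eta$, so one can bound $H(\mu_{x_0^n}\mid P_n^\beta)$ directly by $S(\mu_{x_0^n})+\mathbb{E}_{\mu_{x_0^n}}[E]+\log C_n^\beta\leq C(n|x_0^n|^2+n^2\log n)$. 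The LSI then gives the entropy decay for the regularized process $Y^n$. To pass from $Y^n$ back to $X^n$ one couples them so that $X^{n,i}_t\leq Y^{n,i}_t$ for all $i,t$ (monotonicity from the convexity of $-\log$), and shows that the ``area'' $A^n_t=\pi(Y^n_t)-\pi(X^n_t)$ is a nonnegative supermartingale with quadratic variation bounded below by $\tfrac2n\,dt$ until merging; a Dubins--Schwarz/hitting-time estimate then gives $\mathbb{P}(X^n_t\neq Y^n_t)\to 0$ for any fixed $t>0$ once $\kappa>\tfrac32$. This coupling step is the idea you are missing; your Wasserstein-then-regularize scheme tries to replace it by an analytic bound that is currently unavailable for this singular dynamics.
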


Combining this upper bound with the general lower bound that we obtained in
Corollary \ref{cor:LowerBd}, we deduce the following cutoff phenomenon.
Observe that it holds both for $\beta =0$ and $\beta \geq 1$, and that the
expression of the mixing time does not depend on $\beta$.

\begin{corollary}[Cutoff for DOU in the general case]\label{cor:DOUWTV}
  Let ${(X^n_t)}_{t\geq0}$ be the DOU process \eqref{eq:DOU} with $\beta=0$ or
  $\beta\geq1$ and invariant law $P_n^\beta$. Take
  $\mathrm{dist}\in\{\mathrm{TV},\mathrm{Hellinger},\mathrm{Wasserstein}\}$.
  Let ${(a_n)}_n$ be a real sequence satisfying $\inf_na_n > 0$. Then, for all
  $\varepsilon \in (0,1)$, we have
  \[
    \lim_{n\to\infty}%
    \sup_{x^n_0 \in [-a_n,a_n]^n}
    \mathrm{dist}(\mathrm{Law}(X^n_{t_n})\mid P_n^\beta)
    =\begin{cases}
      \max & \text{if $t_n=(1-\varepsilon)c_n$}\\
      0 & \text{if $t_n=(1+\varepsilon)c_n$}
    \end{cases}
  \]
  where
  \[
    c_n
    := \begin{cases}
      \log(na_n)
      &\text{ if $\mathrm{dist} \in \{\mathrm{TV},\mathrm{Hellinger}\}$}\\
      \log(\sqrt{n} a_n)
      &\text{ if $\mathrm{dist} = \mathrm{Wasserstein}$}
    \end{cases}.
  \]
\end{corollary}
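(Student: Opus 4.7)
The plan is to package the two preceding results into a single statement: the upper bound of Theorem \ref{th:DOUWTV} will handle the ``convergence to $0$'' side, and the lower bound of Corollary \ref{cor:LowerBd}, evaluated at a well-chosen initial condition in the cube, will handle the ``still maximal'' side. The assumption $\inf_n a_n>0$ guarantees that the critical times appearing in the two results coincide up to an additive $O(1)$ term, which is harmless since $c_n\to\infty$ and the cutoff window is multiplicative in $c_n$.

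For the upper bound, I would start by picking, for each $n$, a near-maximizer $x_0^{n,\star}\in[-a_n,a_n]^n$ of $x_0^n\mapsto\mathrm{dist}(\mathrm{Law}(X^n_{t_n})\mid P_n^\beta)$, so that proving convergence along the sequence $(x_0^{n,\star})_n$ suffices. For any such initial condition one has $|x_0^{n,\star}|\leq\sqrt{n}\,a_n$, hence the critical time appearing in Theorem \ref{th:DOUWTV} satisfies
\[
  \log(\sqrt{n}|x_0^{n,\star}|)\vee\log(n)\leq \log(na_n)+|\log a_n^-|
\]
in the TV/Hellinger case, and $\log|x_0^{n,\star}|\vee\log\sqrt{n}\leq\log(\sqrt{n}a_n)+|\log a_n^-|$ in the Wasserstein case, where $|\log a_n^-|$ is bounded by the assumption $\inf_n a_n>0$. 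Therefore for every $\varepsilon\in(0,1)$ and every $n$ large enough, $(1+\varepsilon)c_n\geq (1+\varepsilon/2)$ times the theorem's critical time for $x_0^{n,\star}$. Invoking the non-increase in $t$ of $\mathrm{dist}(\mathrm{Law}(X_t^n)\mid P_n^\beta)$, which holds for TV and Hellinger as a general Markov fact and for Wasserstein via the $W_2$-contraction of the DOU semigroup (whose generator is the gradient of the convex energy $E/n$), Theorem \ref{th:DOUWTV} applied to $(x_0^{n,\star})_n$ yields that the supremum tends to $0$ at time $(1+\varepsilon)c_n$.

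For the lower bound, I would choose the explicit initial condition $x_0^n=(a_n,\dots,a_n)$, which lies in $\overline{D}_n\cap[-a_n,a_n]^n$ and is allowed as a starting point for all $\beta\geq 1$ (and trivially for $\beta=0$). Then $\pi(x_0^n)=na_n$, so that $\log|\pi(x_0^n)|=\log(na_n)$ and $\log(|\pi(x_0^n)|/\sqrt{n})=\log(\sqrt{n}a_n)$, both tending to infinity by the hypothesis on $(a_n)$. Corollary \ref{cor:LowerBd} applied to this sequence yields $\mathrm{dist}(\mathrm{Law}(X^n_{(1-\varepsilon)c_n})\mid P_n^\beta)\to\max$, and taking the supremum over $x_0^n\in[-a_n,a_n]^n$ only strengthens this.

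The bulk of the work is therefore already done in Theorem \ref{th:DOUWTV} and Corollary \ref{cor:LowerBd}; the only mildly delicate point in this corollary is the uniformity of the upper bound in the initial condition, handled above either by the near-maximizer trick together with monotonicity of the distance along the semigroup, or equivalently by noting that the estimates in the proof of Theorem \ref{th:DOUWTV} depend on $x_0^n$ only through $|x_0^n|$ and are therefore automatically uniform on $\{|x_0^n|\leq\sqrt{n}a_n\}$.
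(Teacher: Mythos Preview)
Your proposal is correct and follows essentially the same route as the paper: the lower bound via the constant initial condition $x_0^n=(a_n,\ldots,a_n)$ and Corollary \ref{cor:LowerBd}, and the upper bound by combining the bound $|x_0^n|\le\sqrt{n}a_n$ with the estimates underlying Theorem \ref{th:DOUWTV}. The only cosmetic difference is that the paper, for TV/Hellinger, goes back directly to the uniform bound of Lemma \ref{le:convergence regul} together with Lemma \ref{le:distance X Y} (and, for Wasserstein, to the explicit $W_2$ decay), rather than invoking Theorem \ref{th:DOUWTV} as a black box plus monotonicity; you yourself note this equivalent route at the end of your proposal.
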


\noindent The proofs of Theorem \ref{th:DOUWTV} and Corollary \ref{cor:DOUWTV}
for the TV and Hellinger distances are presented in Section
\ref{se:DOUWTV:ub}. The Wasserstein distance is treated in Section
\ref{se:DOUW}. Let us make a comment on the assumptions made on $a_n$ in
Corollaries \ref{cor:DOU12} and \ref{cor:DOUWTV}. They are dictated by the
upper bounds established in Theorems \ref{th:DOU12} and \ref{th:DOUWTV}, which
take the form of maxima of two terms: one that depends on the initial
condition, and another one which is a power of a logarithm of $n$. The
logarithmic term is an upper bound on the time required to regularize a
pointwise initial condition, its precise expression varies according to the
method of proof we rely on: in the matrix case, it is the time required to
regularize a larger object, the matrix-valued OU process; in the general case,
it is related to the time it takes to make the entropy of a pointwise initial
condition small. These bounds are not optimal for $\beta=0$ (compare with
Theorem \ref{th:OU2}), and probably neither for $\beta \ge 1$.

\medskip

A natural, but probably quite difficult, goal would be to establish a cutoff
phenomenon in the situation where the set of initial conditions is reduced to
\emph{any} given singleton, as in Theorem \ref{th:OU2} for the case
$\beta = 0$. Recall that in that case, the asymptotic of the mixing time is
dictated by the Euclidean norm of the initial condition. In the case
$\beta \ge 1$, this \emph{cannot} be the right observable since the Euclidean
norm does not measure the distance to equilibrium. Instead one should probably
consider the Euclidean norm $|x_0^n-\rho_n|$, where $\rho_n$ is the vector of
the quantiles of order $1/n$ of the semi-circle law that arises in the
mean-field limit equilibrium (see Subsection \ref{se:meanfield}). More
precisely
\begin{equation}\label{eq:qn}
    \rho_{n,i} =\inf\left\{t\in \mathbb{R}:\int_{-\infty}^t   \frac{\sqrt{2\beta-x^2}}{\beta\pi}\mathbf{1}_{x\in[-\sqrt{2\beta},\sqrt{2\beta}]}\mathrm{d}x\geq \frac{i}{n} \right\}, \quad i\in \{1,\ldots,n\}.
\end{equation}
Note that $\rho_n = 0$ when $\beta = 0$.

\smallskip

A first step in this direction is given by the following result:

\begin{theorem}[DOU in the general case and pointwise initial condition]\label{th:DOUW}
  Let ${(X^n_t)}_{t\geq0}$ be the DOU process \eqref{eq:DOU} with $\beta=0$ or
  $\beta\geq1$, and invariant law $P_n^\beta$. There hold
  \begin{itemize}
  \item If $\lim_{n\to\infty}|x^n_0-\rho_n|=+\infty$, then, denoting
    $t_n = \log(|x_0^n-\rho_n|)$, for all $\varepsilon \in (0,1)$,
    \[
      \lim_{n\to\infty}
      \mathrm{Wasserstein}(\mathrm{Law}(X_{(1+\varepsilon)t_n}),P_n^\beta) = 0.
    \]
  \item If $\lim_{n\to\infty}|x^n_0-\rho_n|=\alpha\in [0,\infty)$, then, for all $t>0$,
    \[
      \varlimsup_{n\to\infty}
      \mathrm{Wasserstein}(\mathrm{Law}(X_{t}),P_n^\beta)^2
      \le
      \alpha^2\mathrm{e}^{-2t}.
    \]
  \end{itemize}
\end{theorem}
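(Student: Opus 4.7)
My approach combines the strong convexity of $E$ on $D_n$ (giving a Wasserstein contraction via synchronous coupling) with a rigidity estimate for $P_n^\beta$ around the deterministic quantile vector $\rho_n$. I treat the interacting case $\beta\ge 1$; the case $\beta=0$ follows from the exact Gaussian computations underlying Theorem \ref{th:OU2}.

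I would begin by setting up a synchronous coupling. Recall from the discussion following \eqref{eq:E} that $\nabla^2 E \succeq n I_n$ on $D_n$: the quadratic part $nV$ contributes $n I_n$ and the logarithmic interaction has positive semi-definite Hessian. Let $X^n$ and $\tilde X^n$ be two solutions of \eqref{eq:DOU2} driven by the same Brownian motion, starting respectively from $x_0^n\in\overline D_n$ and from $Y_0\sim P_n^\beta$; both trajectories stay in $\overline D_n$ for $t\ge 0$. Since the Brownian increments cancel,
\[
\frac{\mathrm{d}}{\mathrm{d}t}|X_t^n-\tilde X_t^n|^2
= -\frac{2}{n}\bigl\langle X_t^n-\tilde X_t^n,\ \nabla E(X_t^n)-\nabla E(\tilde X_t^n)\bigr\rangle
\le -2|X_t^n-\tilde X_t^n|^2,
\]
and Gr\"onwall yields $|X_t^n-\tilde X_t^n|\le \mathrm{e}^{-t}|x_0^n-Y_0|$. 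Since $\tilde X_t^n\sim P_n^\beta$ for all $t\ge 0$, this produces the key contraction bound
\begin{equation}\label{eq:plan-coupling}
\mathrm{Wasserstein}(\mathrm{Law}(X_t^n),P_n^\beta)^2 \leq \mathrm{e}^{-2t}\,\mathbb{E}_{Y\sim P_n^\beta}\bigl[|x_0^n-Y|^2\bigr].
\end{equation}

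Next I would reduce the problem to a rigidity statement for $P_n^\beta$. By the $L^2(P_n^\beta)$ triangle inequality,
\[
\mathbb{E}_{Y\sim P_n^\beta}\bigl[|x_0^n-Y|^2\bigr]^{1/2} \leq |x_0^n-\rho_n| + \mathbb{E}_{Y\sim P_n^\beta}\bigl[|Y-\rho_n|^2\bigr]^{1/2},
\]
so it suffices to prove
\begin{equation}\label{eq:plan-rigidity}
\mathbb{E}_{Y\sim P_n^\beta}\bigl[|Y-\rho_n|^2\bigr] \xrightarrow[n\to\infty]{} 0.
\end{equation}
I would prove \eqref{eq:plan-rigidity} by combining the Brascamp--Lieb inequality, equivalent to the optimal Poincar\'e inequality with constant $1/n$ from Subsection \ref{ss:exact}, which gives $\mathrm{Var}_{P_n^\beta}(Y_i)\le 1/n$ for each $i$, with a quantitative control of the bias $|\mathbb{E}[Y_i]-\rho_{n,i}|$ coming from the rigidity theory of log-concave $\beta$-ensembles (of Bourgade--Erd\H{o}s--Yau type), typically of order $n^{-1}(\log n)^C$ in the bulk and $n^{-2/3}(\log n)^C$ at the edge. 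Summing then yields $\mathbb{E}[|Y-\rho_n|^2]=O((\log n)^{C'}/n)=o(1)$.

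Injecting \eqref{eq:plan-rigidity} into \eqref{eq:plan-coupling} produces the master estimate $\mathrm{Wasserstein}(\mathrm{Law}(X_t^n),P_n^\beta)\le \mathrm{e}^{-t}(|x_0^n-\rho_n|+o(1))$. If $|x_0^n-\rho_n|\to\infty$ and $t_n=\log|x_0^n-\rho_n|$, then at time $(1+\varepsilon)t_n$ the bound is at most $|x_0^n-\rho_n|^{-\varepsilon}+o(1)\,|x_0^n-\rho_n|^{-(1+\varepsilon)}\to 0$. If instead $|x_0^n-\rho_n|\to\alpha\in[0,\infty)$, the right-hand side at fixed $t>0$ converges to $\alpha\mathrm{e}^{-t}$, yielding the claimed $\limsup$ bound $\alpha^2\mathrm{e}^{-2t}$. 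The hardest part will be the rigidity estimate \eqref{eq:plan-rigidity}: the Poincar\'e bound alone only gives $\sum_i\mathrm{Var}(Y_i)\le 1=O(1)$, not $o(1)$; upgrading to $o(1)$ genuinely requires quantitative concentration of the individual particles near their classical locations, a property that is nontrivial for general $\beta\ge 1$ and which encapsulates the mean-field behaviour of the $\beta$-Hermite ensemble.
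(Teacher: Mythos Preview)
Your proof is correct and follows essentially the same route as the paper: Wasserstein contraction from the $n$-uniform convexity of $E$ (the paper packages this as Lemma \ref{le:expdec}), a triangle inequality to isolate $|x_0^n-\rho_n|$, and the rigidity estimate $\mathbb{E}_{P_n^\beta}[|Y-\rho_n|^2]\to 0$, which the paper imports directly from \cite[Th.~2.4]{MR3253704}. Your use of the $L^2$-Minkowski inequality rather than $|a+b|^2\le 2|a|^2+2|b|^2$ is in fact what is needed to land exactly on the constant $\alpha^2\mathrm{e}^{-2t}$.
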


\noindent Theorem \ref{th:DOUW} is proved in Section \ref{se:DOUW}.

\subsection{Non-pointwise initial conditions}
\label{ss:nonpoint}

It is natural to ask about the cutoff phenomenon when the initial conditions
$X^n_0$ is not pointwise. Even if we turn off the interaction by taking
$\beta=0$, the law of the process at time $t$ is then no longer Gaussian in
general, which breaks the method of proof used for Theorem \ref{th:OU1} and
Theorem \ref{th:OU2}. Nevertheless, Theorem \ref{th:DOUK} below provides a
universal answer, that is both for $\beta=0$ and $\beta \ge 1$, at the price
however of introducing several objects and notations. More precisely, for any
probability measure $\mu$ on $\mathbb{R}^n$, we introduce
\begin{equation}\label{eq:S}
  S(\mu)
  =\begin{cases}
    \displaystyle\int\frac{\mathrm{d}\mu}{\mathrm{d}x}\log\frac{\mathrm{d}\mu}{\mathrm{d}x}\mathrm{d}x
    =\text{``}\mathrm{Kullback}(\mu\mid\mathrm{d}x)\text{''}
    &\text{if
      $\displaystyle\frac{\mathrm{d}\mu}{\mathrm{d}x}\log\frac{\mathrm{d}\mu}{\mathrm{d}x}\in
      L^1(\mathrm{d}x)$}\\
    +\infty&\text{otherwise}
  \end{cases}.
\end{equation}
Note that $S$ takes its values in the whole $(-\infty,+\infty]$, and when
$S(\mu)<+\infty$ then $-S(\mu)$ is the \emph{Boltzmann--Shannon entropy} of
the law $\mu$. For all $x\in\mathbb{R}^n$ with $x_i \ne x_j$ for all $i\ne j$,
we have
\begin{equation}\label{eq:Phi}
  E(x_1,\ldots,x_n)
  =n^2\iint\Phi(x,y) \mathbf{1}_{\{x\neq y\}} L_n(\mathrm{d}x)L_n(\mathrm{d}y)
\end{equation}
where $\displaystyle L_n:=\frac{1}{n}\sum_{i=1}^n\delta_{x_i}$ and where
$\displaystyle\Phi(x,y):=\frac{n}{n-1}\frac{V(x)+V(y)}{2}+\frac{\beta}{2}\log\frac{1}{|x-y|}$.

Let us define the map $\Psi:\mathbb{R}^n\mapsto \overline{D}_n$ by
\begin{equation}\label{eq:Psi}
  \Psi(x_1,\ldots,x_n):=(x_{\sigma(1)},\ldots,x_{\sigma(n)}).
\end{equation}
where $\sigma$ is any permutation of $\{1,\ldots,n\}$ that reorders the
particles non-decreasingly.

\begin{theorem}[Cutoff for DOU with product smooth initial
  conditions]\label{th:DOUK}
  Let ${(X^n_t)}_{t\geq0}$ be the DOU process \eqref{eq:DOU} with $\beta=0$ or
  $\beta\geq1$, and invariant law $P_n^\beta$. Let $S$, $\Phi$, and $\Psi$
  be as in \eqref{eq:S}, \eqref{eq:Phi}, and \eqref{eq:Psi}. Let us assume
  that $\mathrm{Law}(X_0^n)$ is the image law or push forward of a product law
  $\mu_1\otimes\cdots\otimes\mu_n$ by $\Psi$ where $\mu_1,\ldots,\mu_n$ are
  laws on $\mathbb{R}$. Then:
  \begin{enumerate}
  \item If 
    $\displaystyle\varliminf_{n\to \infty} \Bigr|\frac{1}{n}\sum_{i=1}^n \int x
    \mu_i(\mathrm{d}x)\Bigr| \neq 0$ then, for all $\varepsilon\in(0,1)$,
    \[
      \lim_{n\to \infty}\mathrm{Kullback}(\mathrm{Law}(X_{(1-\varepsilon)\log(n)})\mid P_n^\beta)=+\infty.
    \]
  \item If
    $\displaystyle\varlimsup_{n\to \infty}\frac{1}{n^2}\sum_{i=1}^n
    S(\mu_i)<\infty$ and
    $\displaystyle\varlimsup_{n\to \infty}
    \frac{1}{n^2}\sum_{i\ne j}\iint\Phi\,\mathrm{d}\mu_i \otimes \mathrm{d}\mu_j <\infty$,
    then, for all $\varepsilon\in(0,1)$,
    \[
      \lim_{n\to \infty}\mathrm{Kullback}(\mathrm{Law}(X_{(1+\varepsilon)\log(n)})\mid P_n^\beta)=0.
    \]
  \end{enumerate}
\end{theorem}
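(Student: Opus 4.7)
For the \emph{lower bound} of item (1), the plan is to project onto the linear statistic $\pi(x) := x_1 + \cdots + x_n$ and apply the data processing inequality for the Kullback divergence. By Theorem \ref{th:DOUOU}, the image of $P_n^\beta$ under $\pi$ is $\mathcal{N}(0,1)$ and $(\pi(X^n_t))_{t \geq 0}$ is itself a one-dimensional OU process. Since $\pi$ is symmetric, $\pi(X_0^n) = \pi(\Psi(Y)) = \sum_i Y_i$ where $Y_i \sim \mu_i$ are independent, so $m_n := \mathbb{E}[\pi(X_0^n)] = \sum_i \int x\,\mathrm{d}\mu_i$ and, by linearity of the OU flow, $\mathbb{E}[\pi(X^n_t)] = e^{-t} m_n$. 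The Donsker--Varadhan variational formula with the test function $x \mapsto \lambda x$ gives, for any law $\eta$ on $\mathbb{R}$ with mean $m$,
\[
\mathrm{Kullback}(\eta \mid \mathcal{N}(0,1)) \geq \sup_{\lambda \in \mathbb{R}}\Bigl(\lambda m - \tfrac{\lambda^2}{2}\Bigr) = \tfrac{m^2}{2}.
\]
Combining with data processing yields $\mathrm{Kullback}(\mathrm{Law}(X^n_t) \mid P_n^\beta) \geq \tfrac{1}{2} e^{-2t} m_n^2$. At $t_n = (1-\varepsilon)\log n$, the hypothesis $\varliminf |m_n|/n > 0$ forces $|m_n| \geq c n$ for large $n$, hence a lower bound of order $n^{2\varepsilon} \to \infty$.

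For the \emph{upper bound} of item (2), the plan is to combine exponential decay of entropy along the semigroup with an explicit bound on the initial entropy. From the optimal logarithmic Sobolev inequality with constant $2/n$ (Lemma \ref{le:lsi}), a standard Bakry--Emery computation gives
\[
\mathrm{Kullback}(\mathrm{Law}(X^n_t) \mid P_n^\beta) \leq e^{-2t}\, \mathrm{Kullback}(\mathrm{Law}(X^n_0) \mid P_n^\beta).
\]
Let $\tilde P_n^\beta$ denote the symmetric probability on $\mathbb{R}^n$ with density $e^{-E}/Z_n$, where $Z_n := n!\, C_n^\beta$, so that $P_n^\beta = \Psi_* \tilde P_n^\beta$. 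Data processing through the measurable map $\Psi$ gives, with $\nu := \mu_1 \otimes \cdots \otimes \mu_n$,
\[
\mathrm{Kullback}(\Psi_* \nu \mid P_n^\beta) \leq \mathrm{Kullback}(\nu \mid \tilde P_n^\beta) = \sum_{i=1}^n S(\mu_i) + \int E\, \mathrm{d}\nu + \log Z_n.
\]
A direct symmetrization from the definition of $\Phi$ yields the identity $\int E\, \mathrm{d}\nu = \sum_{i \neq j} \iint \Phi\, \mathrm{d}\mu_i\, \mathrm{d}\mu_j$, so the first two terms are $O(n^2)$ by the hypotheses of item (2). The bound $\log Z_n = O(n^2)$ follows from the Mehta/Selberg integral for the $\beta$-Hermite partition function: after the rescaling $x \mapsto x/\sqrt{n}$ and Stirling, the $n^2\log n$ contribution from $\sum_{j=1}^n \log\Gamma(1 + j\beta/2)$ exactly cancels the $-\tfrac{\beta n(n-1)}{4}\log n$ coming from the Jacobian, leaving $\log Z_n = O(n\log n)$. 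Altogether the initial relative entropy is $O(n^2)$, and at $t_n = (1+\varepsilon)\log n$ one obtains $\mathrm{Kullback}(\mathrm{Law}(X^n_{t_n}) \mid P_n^\beta) \leq n^{-2(1+\varepsilon)}\cdot O(n^2) = O(n^{-2\varepsilon}) \to 0$.

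The \emph{main obstacle} is the control of the normalization: the estimate $\log Z_n = O(n^2)$ hinges on a cancellation between the logarithmic interaction in $E$ and the Gaussian scaling of the partition function, most naturally accessed through Mehta's integral or the large-$n$ free-energy asymptotics of $\beta$-ensembles. A minor technical point is that the hypothesis $\sum_i S(\mu_i) < \infty$ automatically forces each $\mu_i$ to be absolutely continuous with respect to Lebesgue, which is what legitimates the explicit computation of $\mathrm{Kullback}(\nu \mid \tilde P_n^\beta)$ above; conversely, if some $\mu_i$ were singular then already $\sum_i S(\mu_i) = +\infty$ and the hypothesis would fail.
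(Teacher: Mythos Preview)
Your proof is correct and follows essentially the same route as the paper: the lower bound via data processing through $\pi$ and the Donsker--Varadhan variational formula with a linear test function, and the upper bound via exponential entropy decay, data processing through $\Psi$ onto $P_{*n}^\beta$, and the decomposition $\mathrm{Kullback}(\nu\mid P_{*n}^\beta)=\sum_i S(\mu_i)+\int E\,\mathrm{d}\nu+\log C_{*n}^\beta$ together with the Selberg/Mehta bound on the partition function. One small slip: after the $n^2\log n$ cancellation in the partition-function asymptotics you are left with $\log Z_n = O(n^2)$, not $O(n\log n)$ (there remain genuine $n^2$ terms from Stirling, e.g.\ $-\tfrac{\beta}{4}n^2$ and $\tfrac{\beta}{4}n^2\log\tfrac{\beta}{2}$), but since $O(n^2)$ is exactly the bound you need and state, this does not affect the argument.
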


Theorem \ref{th:DOUK} is proved in Section \ref{se:DOUK}.

It is likely that Theorem \ref{th:DOUK} can be extended to the case
$\mathrm{dist}\in\{\mathrm{Wasserstein},\mathrm{Hellinger},\mathrm{Fisher}\}$.

\subsection{Structure of the paper}
\begin{itemize}
\item Section \ref{se:commopen} provides additional comments and open problems.
\item Section \ref{se:OU} focuses on the OU process ($\beta = 0$) and gives
  the proofs of Theorems \ref{th:OU1} and \ref{th:OU2}.
\item Section \ref{se:proofs:DOUOU:DOUCIR:moments} concerns the exact
  solvability of the DOU process for all $\beta$, and provides the proofs of Theorem \ref{th:DOUOU} and
  Corollary \ref{cor:LowerBd}.
\item Section \ref{se:DOU12} is about random matrices and gives the proofs of Theorem \ref{th:DOU12} and
  Corollary \ref{cor:DOU12}.
\item Section \ref{se:DOUWTV:ub} deals with the DOU process for all $\beta$
  with the TV and Hellinger distances, and provides the proofs of Theorem
  \ref{th:DOUWTV} and Corollary \ref{cor:DOUWTV}.
\item Section \ref{se:DOUW} gives the Wasserstein counterpart of Section
  \ref{se:DOUWTV:ub} and the proof of Theorem \ref{th:DOUW}.
\item Appendix \ref{ap:distances} provides a survey on distances and
  divergences, with new results.
\item Appendix \ref{ap:convexity} gathers useful dynamical consequences of
  convexity.
\end{itemize}

\section{Additional comments and open problems}
\label{se:commopen}

\subsection{About the results and proofs}

The proofs of our results rely among other ingredients on convexity and
optimal functional inequalities, exact solvability, exact Gaussian formulas,
coupling arguments, stochastic calculus, variational formulas, contraction
properties and regularization.

The proofs of Theorems \ref{th:OU1} and \ref{th:OU2} are based on the explicit
Gaussian nature of the OU process, which allows to use Gaussian formulas for
all the distances and divergences that we consider (the Gaussian formula for
$\mathrm{Fisher}$ seems to be new). Our analysis of the convergence to
equilibrium of the OU process seems to go beyond what is already known, see
for instance~\cite{MR2203823} and
\cite{10.1214/19-AAP1526,barrera-hogele-pardo-langevin,barrera-hogele-pardo-small-levy,MR4073676}.

Theorem \ref{th:DOUOU} is a one-dimensional analogue of
\cite[Th.~1.2]{MR3847984}. The proof exploits the explicit knowledge of
eigenfunctions of the dynamics \eqref{eq:eigenfunctions}, associated with the
first two non-zero spectral values, and their remarkable properties. The first
one is associated to the spectral gap and the optimal Poincaré inequality. It
implies Corollary \ref{cor:LowerBd}, which is the provider of all our lower
bounds on the mixing time for the cutoff.

The proof of Theorem \ref{th:DOU12} is based on a contraction property and the
upper bound for matrix OU processes. It is not available beyond the matrix
cases. All the other upper bounds that we establish are related to an optimal
exponential decay which comes from convexity and involves sometimes coupling,
the simplest instance being Theorem \ref{th:DOUWTV} about the Wasserstein
distance. The usage of the Wasserstein metrics for Dyson dynamics is quite
natural, see for instance \cite{bertucci-debbah-lasry-lions}.

The proof of Theorem \ref{th:DOUWTV} for the $\mathrm{TV}$ and
$\mathrm{Hellinger}$ relies on the knowledge of the optimal exponential decay
of the entropy (with respect to equilibrium) related to the optimal
logarithmic Sobolev inequality. Since pointwise initial conditions have
infinite entropy, the proof proceeds in three steps: first we regularize the
initial condition to make its entropy finite, second we use the optimal
exponential decay of the entropy of the process starting from this regularized
initial condition, third we control the distance between the processes
starting from the initial condition and its regularized version. This last
part is inspired by a work of Lacoin \cite{MR3474475} for the simple exclusion
process on the segment, subsequently adapted to continuous
state-spaces~\cite{CLL1,CLL2}, where one controls an \emph{area} between two
versions of the process.

The (optimal) exponential decay of the entropy (Lemma \ref{le:expdec}) is
equivalent to the (optimal) logarithmic Sobolev inequality (Lemma
\ref{le:lsi}). For the DOU process, the optimal logarithmic Sobolev inequality
provided by Lemma \ref{le:lsi} achieves also the universal bound with respect
to the spectral gap, just like for Gaussians. This sharpness between the best
logarithmic Sobolev constant and the spectral gap also holds for instance for
the random walk on the hypercube, a discrete process for which a cutoff
phenomenon can be established with the optimal logarithmic Sobolev inequality,
and which can be related to the OU process, see for instance
\cite{zbMATH04001076,DiaSal} and references therein. If we generalize the DOU
process by adding an arbitrary convex function to $V$, then we will still have
a logarithmic Sobolev inequality -- see \cite{zbMATH07238061} for several
proofs including the proof via the Bakry--Émery criterion -- however the
optimal logarithmic Sobolev constant will no longer be explicit nor sharp with
respect to the spectral gap, and the spectral gap will no longer be explicit.

The proof of Theorem \ref{th:DOUK} relies crucially on the tensorization
property of $\mathrm{Kullback}$ and on the asymptotics on the normalizing
constant $C_n^\beta$ at equilibrium.

\subsection{Analysis and geometry of the equilibrium}
\label{Subsec:Analysis}

The full space $\mathbb{R}^n$ is, up to a bunch of hyperplanes, covered with
$n!$ disjoint isometric copies of the convex domain $D_n$ obtained by
permuting the coordinates (simplices or Weyl chambers). Following
\cite{zbMATH07238061}, for all $\beta\geq0$ let us define the law
$P_{*n}^\beta$ on $\mathbb{R}^n$ with density proportional to
$\mathrm{e}^{-E}$, just like for $P_n^\beta$ in \eqref{eq:P} but without the
$\mathbf{1}_{(x_1,\ldots,x_n)\in\overline{D}_n}$.

If $\beta=0$ then $P_{*n}^0=P_n^0=\mathcal{N}(0,\frac{1}{n}I_n)$ according to
our definition of $P_n^0$.

If $\beta > 0$ then $P_{*n}^\beta$ has density
$(C_{*n}^\beta)^{-1}\mathrm{e}^{-E}$ with $C_{*n}^\beta=n!C_n^\beta$ where
$C_n^\beta$ is the normalization of $P_n^\beta$. Moreover $P_{*n}^\beta$ is a
mixture of $n!$ isometric copies of $P_n^\beta$, while $P_n^\beta$ is the
image law or push forward of $P_{*n}^\beta$ by the map
$\Psi_n:\mathbb{R}^n\to\overline{D}_n$ defined in \eqref{eq:Psi}. Furthermore
for all bounded measurable $f:\mathbb{R}^n\to\mathbb{R}$, denoting $\Sigma_n$
the symmetric group of permutations of $\{1,\ldots,n\}$,
\[
  \int f\mathrm{d}P_{*n}^\beta
  =\int f_{\mathrm{sym}}\mathrm{d} P_n^\beta
  \quad\text{with}\quad
  f_{\mathrm{sym}}(x_1,\ldots,x_n)
  :=\frac{1}{n!}\sum_{\sigma\in\Sigma_n}f(x_{\sigma(1)},\ldots,x_{\sigma(n)}).
\]

Regarding log-concavity, it is important to realize that if $\beta=0$ then $E$
is convex on $\mathbb{R}^n$, while if $\beta>0$ then $E$ is convex on $D_n$
but is not convex on $\mathbb{R}^n$ and has $n!$ isometric local minima.

\begin{itemize}
\item The law $P_{*n}^\beta$ is centered but is not log-concave when $\beta>0$
  since $E$ is not convex on $\mathbb{R}^n$.\\
  As $\beta\to0^+$ the law $P_{*n}^\beta$ tends to
  $P_{*n}^0=P_n^0=\mathcal{N}(0,\frac{1}{n}I_n)$ which is log-concave.
\item The law $P_n^\beta$ is not centered but is log-concave for all
  $\beta\geq0$.\\Its density vanishes at the boundary of $D_n$ if $\beta>0$.\\
  As $\beta\to0^+$ the law $P_n^\beta$ tends to the law of the order
  statistics of $n$ i.i.d.~$\mathcal{N}(0,\frac{1}{n})$.
\end{itemize}

\subsection{Spectral analysis of the generator: the non-interacting case}

This subsection and the next deal with analytical aspects of our dynamics. We start with the OU process ($\beta=0$) for which everything is explicit; the next subsection deals with the DOU process ($\beta \ge 1$).

The infinitesimal generator of the OU process is given by
\begin{equation}\label{eq:G0}
  \G f
  =\frac{1}{n}\Bigr(\Delta-\nabla E\cdot\nabla\Bigr)
  =\frac{1}{n}\sum_{i=1}^n\partial_i^2
  - \sum_{i=1}^nV'(x_i)\partial_i.
\end{equation}
It is a self-adjoint operator on $L^2(\mathbb{R}^n, P_n^0)$ that leaves
globally invariant the set of polynomials. Its spectrum is the set of all
non-positive integers, that is,
$\lambda_0 = 0 > \lambda_1 = - 1 > \lambda_2 = -2 > \cdots$. The corresponding
eigenspaces $F_0,F_1,F_2,\cdots$ are finite dimensional: $F_m$ is spanned by
the multivariate Hermite polynomials of degree $m$, in other words tensor
products of univariate Hermite polynomials. In particular, $F_0$ is the vector
space of constant functions while $F_1$ is the $n$-dimensional vector space of
all linear functions.

Let us point out that $\G$ can be restricted to the set of $P_n^0$ square
integrable \emph{symmetric} functions: it leaves globally invariant the set of
\emph{symmetric} polynomials, its spectrum is unchanged but the associated
eigenspaces $E_m$ are the restrictions of the vector spaces $F_m$ to the set
of symmetric functions, in other words, $E_m$ is spanned by the multivariate
\emph{symmetrized} Hermite polynomials of degree $m$. Note that $E_1$ is the
one-dimensional space generated by $\pi(x) =x_1+\cdots+x_n$.

The Markov semigroup ${(\mathrm{e}^{t\G})}_{t\geq0}$ generated by $\G$ admits
$P_n^0$ as a reversible invariant law since $\G$ is self-adjoint in
$L^2(P_n^0)$. Following \cite{saloff1994precise}, let us introduce the
\emph{heat kernel} $p_t(x,y)$ which is the density of
$\mathrm{Law}(X^n_t\mid X^n_0=x)$ with respect to the invariant law $P_n^0$.
The long-time behavior reads $\lim_{t\to\infty}p_t(x,\cdot)=1$ for all
$x\in \mathbb{R}^n$. Let $\left\|\cdot\right\|_p$ be the norm of
$L^p=L^p(P_n^0)$. For all $1\leq p\leq q$, $t\geq0$, $x\in \mathbb{R}^n$, we
have
\begin{equation}
  2\|\mathrm{Law}(X^n_t\mid X^n_0=x)-P_n^0\|_{\mathrm{TV}}
  =\|p_t(x,\cdot)-1\|_1
  \leq \|p_t(x,\cdot)-1\|_p
  \le\|p_t(x,\cdot)-1\|_q.
\end{equation}
In the particular case $p=2$ we can write
\begin{equation}\label{eq:l2expansion}
  \|p_t(x,\cdot)-1\|_2^2
  =\sum_{m=1}^\infty\mathrm{e}^{-2mt}\sum_{\psi\in B_m}|\psi(x)|^2.
\end{equation}
where $B_m$ is an orthonormal basis of $F_m\subset L^2(P_n^0)$, hence
\begin{equation}\label{eq:L2pt}
  \|p_t(x,\cdot)-1\|_2^2
  \geq \mathrm{e}^{-2t}\sum_{\psi\in B_1}|\psi(x)|^2,
\end{equation}
which leads to a lower bound on the $\chi^2$ (in other words $L^2$) cutoff,
provided one can estimate $\sum_{\psi\in B_1}|\psi(x)|^2$ which is the square
of the norm of the projection of $\delta_x$ on $B_1$.

Following \cite[Th.~6.2]{saloff1994precise}, an upper bound would follow from
a Bakry--Émery curvature--dimension criterion $\mathrm{CD}(\rho,d)$ with a
finite dimension $d$, in relation with Nash--Sobolev inequalities and
dimensional pointwise estimates on the heat kernel $p_t(x,\cdot)$ or
ultracontractivity of the Markov semigroup, see for instance
\cite[Sec.~4.1]{zbMATH01680886}. The OU process satisfies to
$\mathrm{CD}(\rho,\infty)$ but never to $\mathrm{CD}(\rho,d)$ with $d$ finite
and is not ultracontractive. Actually the OU process is a critical case, see
\cite[Ex.~2.7.3]{zbMATH01633816}.

\subsection{Spectral analysis of the generator: the interacting case}

We now assume that $\beta \ge 1$. The infinitesimal generator of the DOU process is the operator
\begin{equation}\label{eq:G}
  \G f
  =\frac{1}{n}\Bigr(\Delta-\nabla E\cdot\nabla\Bigr)
  =\frac{1}{n}\sum_{i=1}^n\partial_i^2
  - \sum_{i=1}^nV'(x_i)\partial_i
  +\frac{\beta}{2n}\sum_{j\neq i}\frac{\partial_i-\partial_j}{x_i-x_j}.
\end{equation}
Despite the interaction term, the operator leaves globally invariant the set
of \emph{symmetric} polynomials. Following Lassalle in
\cite{MR1133488,MR1456121}, see also \cite{zbMATH07238061}, the operator $\G$
is a self-adjoint operator on the space of $P_{*n}^\beta$ square integrable
\emph{symmetric} functions of $n$ variables, its spectrum does not depend on
$\beta$ and matches the spectrum of the OU process case $\beta=0$. In
particular the spectral gap is $1$. The eigenspaces $E_m$ are spanned by the
generalized symmetrized Hermite polynomials of degree $m$. For instance, $E_1$
is the one-dimensional space generated by $ \pi(x)=x_1+\cdots+x_n$ while $E_2$
is the two-dimensional space spanned by
\begin{equation}\label{eq:eigenfunctions}
  (x_1+\cdots+x_n)^2-1
  \quad\text{and}\quad
  x_1^2+\cdots+x_n^2-1-\frac{\beta}{2}(n-1).
\end{equation}

From the isometry between $L^2(\overline{D}_n,P_n^\beta)$ and
$L^2_{\mathrm{sym}}(\mathbb{R}^n,P_{*n}^\beta)$, the above explicit spectral
decomposition applies to the semigroup of the DOU on $\overline{D}_n$.
Formally, the discussion presented at the end of the previous subsection still
applies. However, in the present interacting case the integrability properties
of the heat kernel are not known: in particular, we do not know whether
$p_t(x,\cdot)$ lies in $L^p(P_n^\beta)$ for $t>0$, $x\in \overline{D}_n$ and
$p>1$. This leads to the question, of independent interest, of pointwise upper
and lower Gaussian bounds for heat kernels similar to the OU process, with
explicit dependence of the constants over the dimension. We refer for example
to \cite{zbMATH05122355,zbMATH05072820,zbMATH05643486} for some results in
this direction.

\subsection{Mean-field limit}\label{se:meanfield}

The measure $P_n^\beta$ is log-concave since $E$ is convex, and its density
writes
\begin{equation}\label{eq:HBE}
  x\in\mathbb{R}^n\mapsto
  \frac{\mathrm{e}^{-\frac{n}{2}|x|^2}}{C_n^\beta}\prod_{i>j}(x_i-x_j)^\beta\mathbf{1}_{x_1\leq\cdots\leq x_n}.
\end{equation}
See \cite[Sec.~2.2]{zbMATH07238061} for a high-dimensional analysis. The
Boltzmann--Gibbs measure $P_n^\beta$ is known as the $\beta$-Hermite ensemble
or H$\beta$E. When $\beta=2$, it is better known as the Gaussian Unitary
Ensemble (GUE). If $X^n\sim P_n^\beta$ then the Wigner theorem states that the
empirical measure with atoms distributed according to $P_n^\beta$ converges in
distribution to a semi-circle law, namely
\begin{equation}\label{eq:SC}
  \frac{1}{n}\sum_{i=1}^n\delta_{X^{n,i}}
  \underset{n\to\infty}{\overset{\text{weak}}{\longrightarrow}}
  \frac{\sqrt{2\beta-x^2}}{\beta\pi}\mathbf{1}_{x\in[-\sqrt{2\beta},\sqrt{2\beta}]}\mathrm{d}x,
\end{equation}
and this can be deduced in this Coulomb gas context from a large deviation
principle as in \cite{MR1465640}.

Let ${(X^n)}_{t\geq0}$ be the process solving \eqref{eq:DOU} with $\beta\geq0$
or $\beta\geq1$, and let
\begin{equation}\label{eq:munt}
  \mu^n_t=\frac{1}{n}\sum_{k=1}^n\delta_{X^{n,i}_t}
\end{equation}
be the empirical measure of the particles at time $t$. Following notably
\cite{rogers1993interacting,MR1851716,MR2209130,MR2053570,zbMATH07327443,zbMATH06996550},
if the sequence of initial conditions ${(\mu_0^n)}_{n\geq1}$ converges weakly
as $n\to\infty$ to a probability measure $\mu_0$, then the sequence of measure
valued processes ${({(\mu_t^n)}_{t\geq0})}_{n\geq1}$ converges weakly to the
unique probability measure valued deterministic process ${(\mu_t)}_{t\geq0}$
satisfying the evolution equation
\begin{equation}\label{eq:MKV}
  \langle\mu_t,f\rangle
  =\langle\mu_0,f\rangle
  -\int_0^t \int V'(x)f'(x)\mu_s(\mathrm{d}x) \mathrm{d}s
  +\frac{\beta}{2}
  \int_0^t\int_{\mathbb{R}^2}\frac{f'(x)-f'(y)}{x-y}\mu_s(\mathrm{d}x)\mu_s(\mathrm{d}y)\mathrm{d}s 
  \quad 
\end{equation}
for all $t\geq0$ and $f\in\mathcal{C}^3_b(\mathbb{R},\mathbb{R})$. The
equation \eqref{eq:MKV} is a weak formulation of a McKean--Vlasov equation or
free Fokker--Planck equation associated to a free OU process. Moreover, if
$\mu_0$ has all its moments finite, then for all $t\geq0$, we have the free
Mehler formula
\begin{equation}\label{eq:free}
  \mu_t = \mathrm{dil}_{\mathrm{e}^{-2t}}\mu_0\boxplus\mathrm{dil}_{\sqrt{1-\mathrm{e}^{-2t}}}\mu_\infty,
\end{equation}
where $\mathrm{dil}_\sigma\mu$ is the law of $\sigma X$ when $X\sim\mu$, where
``$\boxplus$'' stands for the free convolution of probability measures of
Voiculescu free probability theory, and where $\mu_\infty$ is the semi-circle
law of variance $\frac{\beta}{2}$. In particular, if $\mu_0$ is a semi-circle
law then $\mu_t$ is a semi-circle law for all $t\geq0$.

Let us introduce the $k$-th moment
$m_k(t):=\displaystyle\int x^k\mu_t(\mathrm{d}x)$ of $\mu_t$. The first and
second moments satisfy the differential equations $m_1'=-m_1$ and
$m_2'=-2m_2+\beta$ respectively, which give
\begin{equation}\label{eq:mom12}
  m_1(t)=\mathrm{e}^{-t}m_1(0)
  \underset{t\to\infty}{\longrightarrow}0
  \quad\text{and}\quad
  m_2(t)=m_2(0)\mathrm{e}^{-2t}+\frac{\beta}{2}(1-\mathrm{e}^{-2t})
  \underset{t\to\infty}{\longrightarrow}\frac{\beta}{2}.
\end{equation}
More generally, beyond the first two moments, the Cauchy--Stieltjes transform
\begin{equation}\label{eq:st}
  z\in\mathbb{C}_+ %
  =\{z\in\mathbb{C}:\Im z>0\}\mapsto s_t(z) %
  =\int_{\mathbb{R}}\frac{\mu_t(\mathrm{d}x)}{x-z}
\end{equation}
of $\mu_t$ is the solution of the following complex Burgers equation
\begin{equation}\label{eq:B}
  \partial_ts_t(z)=s_t(z)+z\partial_zs_t(z)+ \beta s_t(z)\partial_zs_t(z),\quad t\geq0, z\in\mathbb{C}_+.
\end{equation}
The semi-circle law on $[-c,c]$ has density
$\frac{2\sqrt{c^2-x^2}}{\pi c^2}\mathbf{1}_{x\in[-c,c]}$, mean $0$, second
moment or variance $\frac{c^2}{4}$, and Cauchy--Stieltjes transform
$s_t(z)=\frac{\sqrt{4z^2-4c^2}-2z}{c^2}$, $t\geq0 , z\in\mathbb{C}_+$.

The cutoff phenomenon is in a sense a diagonal $(t,n)$ estimate, melting long
time behavior and high dimension. When $|z_0^n|$ is of order $n$, cutoff
occurs at a time of order $\approx\log(n)$: this informally corresponds to
taking $t\to\infty$ in $(\mu_t)_{t\ge 0}$.

When $\mu_0$ is centered with same second moment $\frac{\beta}{2}$ as
$\mu_\infty$, then there is a Boltzmann H-theorem interpretation of the
limiting dynamics as $n\to\infty$: the steady-state is the Wigner semi-circle
law $\mu_\infty$, the second moment is conserved by the dynamics, the
Voiculescu entropy is monotonic along the dynamics, grows exponentially, and
is maximized by the steady-state.

\subsection{$L^p$ cutoff}
\label{ss:Lp}

Following \cite{MR2375599}, we can deduce an $L^p$ cutoff started from $x$
from an $L^1$ cutoff by showing that the heat kernel $p_t(x,\cdot)$ is in
$L^p(P_n^\beta)$ for some $t>0$. Thanks to the Mehler formula, it can be checked
that this holds for the OU case, despite the lack of
ultracontractivity. The heat kernel of the DOU process
is less accessible.

In another exactly solvable direction, the $L^p$ cutoff phenomenon has been
studied for instance in \cite{saloff1994precise,MR2111426} for Brownian motion
on compact simple Lie groups, and in \cite{MR2111426,meliot2014cut} for
Brownian motion on symmetric spaces, in relation with representation theory,
an idea which goes back at the origin to the works of Diaconis on random
walks on groups.

\subsection{Cutoff window and profile}

Once a cutoff phenomenon is established, one can ask for a finer description
of the pattern of convergence to equilibrium. The \emph{cutoff window} is the
order of magnitude of the transition time from the value $\max$ to the value
$0$: more precisely, if cutoff occurs at time $c_n$ then we say that the
cutoff window is $w_n$ if
\begin{align*}
	\varlimsup_{b\to +\infty} \varlimsup_{n\to\infty} \mathrm{dist}(\mathrm{Law}(X_{c_n + bw_n})\mid P_n^\beta) &=0,\\
	\varliminf_{b\to -\infty} \varliminf_{n\to\infty} \mathrm{dist}(\mathrm{Law}(X_{c_n + bw_n})\mid P_n^\beta) &=\max,
\end{align*}
and for any $b\in \mathbb{R}$
\[
  0 <  \varliminf_{n\to\infty} \mathrm{dist}(\mathrm{Law}(X_{c_n + bw_n})\mid
  P_n^\beta)
  \le  \varlimsup_{n\to\infty} \mathrm{dist}(\mathrm{Law}(X_{c_n + bw_n})\mid
  P_n^\beta) < \max.
\]
Note that necessarily $w_n = o(c_n)$ by definition of the cutoff phenomenon. Note also that $w_n$ is unique in the following sense: $w'_n$ is a cutoff window if and only if $w_n/w'_n$ remains bounded from above and below as $n\to\infty$.\\
We say that the \emph{cutoff profile} is given by
$\varphi:\mathbb{R}\to [0,1]$ if
\[
  \lim_{n\to\infty} \mathrm{dist}(\mathrm{Law}(X_{c_n + bw_n})\mid P_n^\beta)
  = \varphi(b).
\]
The analysis of the OU process carried out in Theorems \ref{th:OU1} and \ref{th:OU2} can be pushed further to establish the so-called cutoff profiles, we refer to the end of Section \ref{se:OU} for details.\\

Regarding the DOU process, such a detailed description of the convergence to
equilibrium does not seem easily accessible. However it is straightforward to
deduce from our proofs that the cutoff window is of order $1$, in other words the inverse of the spectral gap, in the setting of Corollary \ref{cor:DOU12}. This is also the case in the setting of Corollary \ref{cor:DOUWTV} for the Wasserstein distance.\\
We believe that this remains true in the setting of Corollary \ref{cor:DOUWTV}
for the TV and Hellinger distances: actually, a lower bound of the required
order can be derived from the calculations in the proof of Corollary
\ref{cor:LowerBd}; on the other hand, our proof of the upper bound on the
mixing time does not allow to give a precise enough upper bound on the window.

\subsection{Other potentials}\label{sec:genpot}

It is natural to ask about the cutoff phenomenon for the process solving
\eqref{eq:DOU} when $V$ is a more general $\mathcal{C}^2$ function. The
invariant law $P_n^\beta$ of this Markov diffusion writes
\begin{equation}\label{eq:DOUVPn}
  \frac{\mathrm{e}^{-n\sum_{i=1}^nV(x_i)}}{C_n^\beta}\prod_{i>j}(x_i-x_j)^\beta
  \mathbf{1}_{(x_1,\ldots,x_n)\in\overline{D}_n}\mathrm{d}x_1\cdots\mathrm{d}x_n.
\end{equation}
The case where $V-\frac{\rho}{2}\left|\cdot\right|^2$ is convex for some
constant $\rho\geq0$ generalizes the DOU case and has exponential convergence
to equilibrium, see \cite{zbMATH07238061}. Three exactly solvable cases
are known:
\begin{itemize}
\item $\mathrm{e}^{-V(x)}=\mathrm{e}^{-\frac{x^2}{2}}$: the DOU
  process associated to the Gaussian law weight and the $\beta$-Hermite
  ensemble including HOE/HUE/HSE when $\beta\in\{1,2,4\}$,
\item $\mathrm{e}^{-V(x)}=x^{a-1}\mathrm{e}^{-x}\mathbf{1}_{x\in[0,\infty)}$:
  the Dyson--Laguerre process associated to the Gamma law weight and the
  $\beta$-Laguerre ensembles including LOE/LUE/LSE when $\beta\in\{1,2,4\}$,
\item $\mathrm{e}^{-V(x)}=x^{a-1}(1-x)^{b-1}\mathbf{1}_{x\in[0,1]}$: the
  Dyson--Jacobi process associated to the Beta law weight and the
  $\beta$-Jacobi ensembles including JOE/JUE/JSE when $\beta\in\{1,2,4\}$,
\end{itemize}
up to a scaling. Following Lassalle
\cite{MR1133488,MR1105634,MR1096625,MR1456121} and Bakry \cite{MR1417973}, in
these three cases, the multivariate orthogonal polynomials of the invariant
law $P_n^\beta$ are the eigenfunctions of the dynamics of the process. We
refer to \cite{zbMATH05136242,zbMATH02122403,zbMATH02122724} for more
information on (H/L/J)$\beta$E random matrix models.

The contraction property or spectral projection used to pass from a matrix
process to the Dyson process can be used to pass from BM on the unitary group
to the Dyson circular process for which the invariant law is the Circular
Unitary Ensemble (CUE). This provides an upper bound for the cutoff
phenomenon. The cutoff for BM on the unitary group is known and holds at
critical time or order $\log(n)$, see for instance
\cite{MR2111426,saloff1994precise,meliot2014cut}.

More generally, we could ask about the cutoff phenomenon for a McKean--Vlasov
type interacting particle system ${(X^n_t)}_{t\geq0}$ in $(\mathbb{R}^d)^n$
solution of the stochastic differential equation of the form
\begin{equation}
  \mathrm{d}X^{n,i}_t
  =\sigma_{n,t}(X^n)\mathrm{d}B^n_t
  -\sum_{i=1}^n\nabla V_{n,t}(X^{n,i}_t)\mathrm{d}t
  -\sum_{j\neq i}\nabla W_{n,t}(X^{n,i}_t-X^{n,j}_t)\mathrm{d}t,\quad 1\leq i\leq n,
\end{equation}
for various types of confinement $V$ and interaction $W$ (convex, repulsive,
attractive, repulsive-attractive, etc), and discuss the relation with the
propagation of chaos. The case where $V$ and $W$ are both convex and constant
in time is already very well studied from the point of view of long-time
behavior and mean-field limit in relation with convexity, see for instance
\cite{MR2209130,MR2053570,zbMATH07327443}.

Regarding universality, it is worth noting that if $V=\left|\cdot\right|^2$
and if $W$ is convex then the proof by factorization of the optimal Poincaré
and logarithmic Sobolev inequalities and their extremal functions given in
\cite{zbMATH07238061} remains valid, paving the way to the generalization of
many of our results in this spirit. On the other hand, the convexity of the
limiting energy functional in the mean-field limit is of Bochner type and
suggests to take for $W$ a power, in other words a Riesz type interaction.

\subsection{Alternative parametrization}\label{se:altparam}

If ${(X^n_t)}_{t\geq0}$ is the process solution of the stochastic differential
equation \eqref{eq:DOU}, then for all real parameters $\alpha>0$ and
$\sigma>0$, the space scaled and time changed stochastic process
${(Y^n_t)}_{t\geq0}={(\sigma X^n_{\alpha t})}_{t\geq0}$ solves the stochastic
differential equation
\begin{equation}\label{eq:DOUalt}
  Y^n_0=\sigma x^n_0,\quad
  \mathrm{d}Y_t^{n,i}
  =\sqrt{\frac{2\alpha\sigma^2}{n}}\mathrm{d}B^i_t
  -\alpha Y_t^{n,i}\mathrm{d}t
  +\frac{\alpha\beta\sigma^2}{n}\sum_{j\neq i}\frac{\mathrm{d}t}{Y_t^{n,i}-Y_t^{n,j}},\quad 1\leq
  i\leq n,
\end{equation}
where ${(B_t)}_{t\geq0}$ is a standard $n$-dimensional BM. The
invariant law of 
${(Y^n_t)}_{t\geq0}$ is
\begin{equation}\label{eq:Palt}
  \frac{\mathrm{e}^{-\frac{n}{2\sigma^2}|y|^2}}{C_n^\beta}
  \prod_{i>j}(y_i-y_j)^\beta\mathbf{1}_{(y_1,\ldots,y_n)\in\overline{D}_n}
  \mathrm{d}y_1\cdots\mathrm{d}y_n
\end{equation}
where $C_n^\beta$ is the normalizing constant. This law and its normalization $C_n^\beta$
depend on the ``shape parameter'' $\beta$, the ``scale parameter'' $\sigma$,
and does not depend on the ``speed parameter'' $\alpha$. When $\beta>0$,
taking $\sigma^2=\beta^{-1}$, the stochastic differential equation
\eqref{eq:DOUalt} boils down to
\begin{equation}\label{eq:DOUaltspecial}
  Y^n_0=\frac{x^n_0}{\sqrt{\beta}},\quad
  \mathrm{d}Y_t^{n,i}
  =\sqrt{\frac{2\alpha}{n\beta}}\mathrm{d}B^i_t
  -\alpha Y_t^{n,i}\mathrm{d}t
  +\frac{\alpha}{n}\sum_{j\neq i}\frac{\mathrm{d}t}{Y_t^{n,i}-Y_t^{n,j}},\quad 1\leq
  i\leq n
\end{equation}
while the invariant law becomes 
\begin{equation}\label{eq:Paltspecial}
  \frac{\mathrm{e}^{-\frac{n\beta}{2}|y|^2}}{C_n^\beta}
  \prod_{i>j}(y_i-y_j)^\beta\mathbf{1}_{(y_1,\cdots,y_n)\in\overline{D}_n}
  \mathrm{d}y_1\cdots\mathrm{d}y_n.
\end{equation}
The equation \eqref{eq:DOUaltspecial} is the one considered in
\cite[Eq.~(12.4)]{MR3699468} and in \cite[Eq.~(1.1)]{zbMATH07109860}. The
advantage of \eqref{eq:DOUaltspecial} is that $\beta$ can be now truly
interpreted as an inverse temperature and the right-hand side in the analogue
of \eqref{eq:SC} does not depend on $\beta$, while the drawback is that we
cannot turn off the interaction by setting $\beta=0$ and recover the
OU process as in \eqref{eq:DOU}. It is worthwhile mentioning
that for instance Theorem \ref{th:DOUWTV} remains the same for the process
solving \eqref{eq:DOUaltspecial} in particular the cutoff threshold is at
critical time $\frac{c_n}{\alpha}$ and does not depend on $\beta$.

\subsection{Discrete models}

There are several discrete space Markov processes admitting the OU process as
a scaling limit, such as for instance the random walk on the discrete
hypercube, related to the Ehrenfest model, for which the cutoff has been
studied in \cite{zbMATH04001076,DiaSal}, and the M/M/$\infty$ queuing process,
for which a discrete Mehler formula is available \cite{zbMATH05216861}.
Certain discrete space Markov processes incorporate a singular repulsion
mechanism, such as for instance the exclusion process on the segment, for
which the study of the cutoff in \cite{MR3474475} shares similarities with our
proof of Theorem \ref{th:DOUWTV}. It is worthwhile noting that there are
discrete Coulomb gases, related to orthogonal polynomials for discrete
measures, suggesting to study discrete Dyson processes. More generally, it
could be natural to study the cutoff phenomenon for Markov processes on
infinite discrete state spaces, under curvature condition, even if the subject
is notoriously disappointing in terms of high-dimensional analysis. We refer
to the recent work \cite{salez2021cutoff} for the finite state space case.

\section{Cutoff phenomenon for the OU}
\label{se:OU}

In this section, we prove Theorems \ref{th:OU1} and \ref{th:OU2}: actually we only prove the latter since it implies the former. We start by recalling a
well-known fact.

\begin{lemma}[Mehler formula]\label{le:Mehler}
  If ${(Y_t)}_{t\geq0}$ is an OU process in $\mathbb{R}^d$
  solution of the stochastic differential equation $Y_0=y_0\in\mathbb{R}^d$
  and $\mathrm{d}Y_t=\sigma\mathrm{d}B_t-\mu Y_t\mathrm{d}t$ for parameters
  $\sigma>0$ and $\mu>0$ where $B$ is a standard $d$-dimensional Brownian
  motion then
  \[
    {(Y_t)}_{t\geq0}
    =
    {\Bigr(y_0 \mathrm{e}^{-\mu
        t}+\sigma\int_0^t\mathrm{e}^{\mu(s-t)}\mathrm{d}B_s\Bigr)}_{t\geq0}
    \text{ hence }
    Y_t
    \sim\mathcal{N}\Bigr(
    y_0\mathrm{e}^{-\mu t},
    \frac{\sigma^2}{2}\frac{1-\mathrm{e}^{-2\mu t}}{\mu}\mathrm{I}_d
    \Bigr)
    \text{ for all $t\geq0$.}
  \]
  Moreover its coordinates are independent one-dimensional OU processes with
  initial condition $y_0^i$ and invariant law
  $\mathcal{N}(0,\frac{\sigma^2}{2\mu})$, $1\leq i\leq d$.
\end{lemma}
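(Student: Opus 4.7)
The plan is to use Itô's product rule on the transformed process $e^{\mu t} Y_t$, exploiting the classical integrating factor trick that turns a linear SDE into a pure stochastic integral. First I would compute
\[
  \mathrm{d}\bigl(e^{\mu t} Y_t\bigr) = \mu e^{\mu t} Y_t \,\mathrm{d}t + e^{\mu t} \mathrm{d}Y_t = \sigma e^{\mu t}\,\mathrm{d}B_t,
\]
the two drift contributions cancelling exactly. Integrating from $0$ to $t$ and dividing through by $e^{\mu t}$ yields the explicit representation $Y_t = y_0 e^{-\mu t} + \sigma \int_0^t e^{\mu(s-t)}\,\mathrm{d}B_s$ claimed in the statement.

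Next, to identify the law, I would observe that the stochastic integral $\int_0^t e^{\mu(s-t)}\,\mathrm{d}B_s$ is a Wiener integral of a deterministic (scalar) function against a $d$-dimensional Brownian motion. Such an integral is centered Gaussian in $\mathbb{R}^d$, and its covariance matrix is computed by the Itô isometry as
\[
  \sigma^2 \Bigl(\int_0^t e^{2\mu(s-t)}\,\mathrm{d}s\Bigr) \mathrm{I}_d = \frac{\sigma^2}{2}\cdot\frac{1-e^{-2\mu t}}{\mu}\mathrm{I}_d.
\]
Adding the deterministic shift $y_0 e^{-\mu t}$ gives the Gaussian law displayed in the lemma.

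For the last statement, I would simply note that the SDE is diagonal: each coordinate $Y_t^i$ solves $\mathrm{d}Y_t^i = \sigma\,\mathrm{d}B_t^i - \mu Y_t^i\,\mathrm{d}t$ with initial data $y_0^i$, driven by the independent one-dimensional Brownian motions $B^1,\ldots,B^d$. Since the explicit representation above expresses $Y_t^i$ as a deterministic functional of $B^i$ alone, the coordinates $(Y^i)_{1\leq i\leq d}$ are independent one-dimensional OU processes. Passing to the limit $t\to\infty$ in the Gaussian law of each coordinate gives the claimed invariant law $\mathcal{N}(0,\sigma^2/(2\mu))$.

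There is no real obstacle here: the entire proof is a textbook application of Itô calculus and the Wiener isometry. The only point requiring minor care is the interchange of the diagonal component-wise analysis with the vectorial covariance computation, but this is immediate because the diffusion coefficient $\sigma \mathrm{I}_d$ is scalar and the drift matrix $-\mu \mathrm{I}_d$ is diagonal.
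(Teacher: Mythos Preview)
Your argument is correct and is exactly the standard textbook derivation. The paper itself does not give a proof of this lemma at all: it simply states it as a ``well-known fact'' and immediately moves on to use it in the proof of Theorems~1.1 and~1.2. So there is nothing to compare against, and your write-up would serve perfectly well as the omitted proof.
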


\begin{proof}[Proof of Theorem \ref{th:OU1} and Theorem \ref{th:OU2}] 
  By using Lemma \ref{le:Mehler}, for all $n\geq1$ and $t\geq0$,
  \begin{equation}\label{eq:Mehler}
    Z^n_t
    \sim\mathcal{N}\Bigr(z^n_0\mathrm{e}^{-t},\frac{1-\mathrm{e}^{-2t}}{n}I_n\Bigr)
    =\otimes_{i=1}^n\mathcal{N}\Bigr(z^{n,i}_0\mathrm{e}^{-t},\frac{1-\mathrm{e}^{-2t}}{n}\Bigr),
    \ %
    P_n^0=\mathcal{N}\Bigr(0,\frac{I_n}{n}\Bigr)
    =\mathcal{N}\Bigr(0,\frac{1}{n}\Bigr)^{\otimes n}.
  \end{equation}
  
  \emph{Hellinger, Kullback, $\chi^2$, Fisher, and Wasserstein cutoffs.} A
  direct computation from \eqref{eq:Mehler} or Lemma \ref{le:gauss} either
  from multivariate Gaussian formulas or univariate via tensorization gives
  \begin{align}
    \mathrm{Hellinger}^2(\mathrm{Law}(Z^n_t),P_n^0)
    &=1-\exp\Bigr(-\frac{n}{4}\frac{|z^n_0|^2\mathrm{e}^{-2t}}{2-\mathrm{e}^{-2t}}+\frac{n}{4}\log\Bigr(4\frac{1-\mathrm{e}^{-2t}}{(2-\mathrm{e}^{-2t})^2}\Bigr)\Bigr),\label{eq:OUH}\\
    2\mathrm{Kullback}(\mathrm{Law}(Z^n_t)\mid P_n^0)
    &=n|z^n_0|^2\mathrm{e}^{-2t}-n\mathrm{e}^{-2t}-n\log(1-\mathrm{e}^{-2t}),\label{eq:OUK}\\    
    \chi^2(\mathrm{Law}(Z^n_t)\mid P_n^0)
    &=-1+\frac1{(1-\mathrm{e}^{-4t})^{n/2}} 
    \exp\Bigr( n|z_0^n|^2 \frac{\mathrm{e}^{-2t}}{1+\mathrm{e}^{-2t}} \Bigr),\label{eq:OUL2}\\
    \mathrm{Fisher}(\mathrm{Law}(Z^n_t)\mid P_n^0)
    &=n^2|z^n_0|^2\mathrm{e}^{-2t}+n^2\frac{\mathrm{e}^{-4t}}{1-\mathrm{e}^{-2t}},\label{eq:OUF}\\
    \mathrm{Wasserstein}^2(\mathrm{Law}(Z^n_t),P_n^0)
    &=|z^n_0|^2\mathrm{e}^{-2t}+2(1-\sqrt{1-\mathrm{e}^{-2t}}-\frac12 \mathrm{e}^{-2t}),\label{eq:OUW}
  \end{align}
  which gives the desired lower and upper bounds as before by using the
  hypothesis on $z^n_0$.

  \emph{Total variation cutoff.} By using the comparison between total
  variation and Hellinger distances (Lemma \ref{le:distineqs}) we deduce from
  \eqref{eq:OUH} the cutoff in total variation distance at the same critical
  time. The upper bound for the total variation distance can alternatively be
  obtained by using the $\mathrm{Kullback}$ estimate \eqref{eq:OUK} and the
  Pinsker--Csiszár--Kullback inequality (Lemma \ref{le:distineqs}). Since both
  distributions are tensor products, we could use alternatively the
  tensorization property of the total variation distance (Lemma
  \ref{le:tenso}) together with the one-dimensional version of the Gaussian
  formula for $\mathrm{Kullback}$ (Lemma \ref{le:distineqs}) to obtain the
  result for the total variation.
\end{proof}

\begin{remark}[Competition between bias and variance mixing]\label{remark:competition}
  From the computations of the proof of Theorem \ref{th:OU2}, we can show that
  for $\mathrm{dist} \in \{\mathrm{TV}, \mathrm{Hellinger}, \chi^2\}$
  \[
    A_t :=
    \mathrm{dist}(\mathrm{Law}(Z^n_t)\mid\mathrm{Law}(Z^n_t-z^n_0\mathrm{e}^{-t}))
  \]
  has a cutoff at time $c_n^{A} = \log(\sqrt{n} |z^n_0|)$, while
  \[
    B_t := \mathrm{dist}(\mathrm{Law}(Z^n_t-z^n_0\mathrm{e}^{-t})\mid P_n^0)
  \]
  admits a cutoff at time $c_n^{B} = \frac14 \log(n)$. The triangle inequality
  for $\mathrm{dist}$ yields
  \[
    | A_t - B_t | \le \mathrm{dist}(\mathrm{Law}(Z^n_t)\mid P_n^0)\le  A_t + B_t.
  \]
  Therefore the critical time of Theorem \ref{th:OU2} is dictated by either
  $A_t$ or $B_t$, according to whether $c_n^A \gg c_n^B$ or $c_n^A \ll c_n^B$.
  This can be seen as a competition between bias and variance mixing.
\end{remark}

\begin{remark}[Total variation discriminating event for small initial
  conditions]\label{remark:small initial}
  Let us introduce the random variable
  $Z^n_\infty\sim
  P_n^0=\mathcal{N}(0,\frac{1}{n}I_n)=\mathcal{N}(0,\frac{1}{n})^{\otimes n}$,
  in accordance with \eqref{eq:Mehler}. There holds
  \[
    S_t^n:=\sum_{i=1}^n (Z_t^{n,i} -z_0^{n,i}\mathrm{e}^{-t})^2
    \sim \mathrm{Gamma}\Bigr(\frac{n}{2},\frac{n}{2(1-\mathrm{e}^{-2t})}\Bigr)
    \quad\text{and}\quad
    |Z^n_\infty|^2\sim\mathrm{Gamma}\Bigr(\frac{n}{2},\frac{n}{2}\Bigr).
  \]
  We can check, using an explicit computation of Hellinger and Kullback
  between Gamma distributions and the comparison between total variation and
  Hellinger distances (Lemma \ref{le:distineqs}), that
  \[
    C_t := \mathrm{dist}(\mathrm{Law}(S^n_t)\mid \mathrm{Law}(|Z^n_\infty|^2))
  \]
  admits a cutoff at time $c_n^C = c_n^B = \frac14 \log(n)$.
  Moreover, one can exhibit a discriminating
  event for the TV distance. Namely, we can observe
  that
  \[
    \Bigr\Vert
    \mathrm{Gamma}
    \Bigr(\frac{n}{2},\frac{n}{2(1-\mathrm{e}^{-2t})}\Bigr)
    -\mathrm{Gamma}\Bigr(\frac{n}{2},\frac{n}{2}\Bigr)
    \Bigr\Vert_{\mathrm{TV}} %
    =\mathbb{P}( |Z^n_\infty|^2 \geq \alpha_{t})-\mathbb{P}( S_t^n \geq \alpha_{t})
  \]
  with $\alpha_t$ the unique point where the two densities meet, which happens to be
  \[
    \alpha_{t}=-\mathrm{e}^{2t}\log(1-\mathrm{e}^{-2t})(1-\mathrm{e}^{-2t}).
  \]
\end{remark}

From the explicit expressions \eqref{eq:OUH}, \eqref{eq:OUK}, \eqref{eq:OUL2},
\eqref{eq:OUF}, \eqref{eq:OUW}, it is immediate to extract the cutoff profile
associated to the convergence of $\mathrm{Law}(Z_t^n)$ to $P_n^0$ in
Hellinger, Kullback, $\chi^2$, Fisher and Wasserstein. For Wasserstein we
already know by Theorem \ref{th:OU2} that a cutoff occurs if and only if
$|z_0^n|\underset{n\to\infty}{\to} \infty$. In this case, regarding the
profile, we have
\begin{equation}
  \lim_{n\to\infty}\mathrm{Wasserstein}(\mathrm{Law}(Z_t^n),P_n^0)=\phi(b),
\end{equation}
where for all $b\in \mathbb{R}$,
\begin{equation}
    t_{n,b}=\log |z_0^n| + b\quad \text{and}\quad \phi(b)=\mathrm{e}^{-b}.
\end{equation}
For the other distances and divergences, let us assume that the following limit exists
\begin{equation}\label{eq:nzn2}
	a:= \lim_{n\to\infty}\sqrt{n}|z_0^n|^2\in[0,+\infty].
\end{equation}
This quantity can be related with 
  \begin{equation}\label{eq:cnAcnB}
	  c_n^A:=\log(|z_0^n|\sqrt{n})
    \quad\text{and}\quad
    c_n^B:=\frac{\log n}{4}
  \end{equation}
which were already introduced in Remark \ref{remark:competition}. Indeed
\[ a= 0 \Longleftrightarrow c_n^A \ll c_n^B,\quad a= +\infty
  \Longleftrightarrow c_n^A \gg c_n^B,\]
while $a\in (0,\infty)$ is equivalent to $c_n^A \asymp c_n^B$.\\
Then, for
$\mathrm{dist}\in \{\mathrm{Hellinger},\mathrm{Kullback},\chi^2,\mathrm{Fisher}
\}$, we have, for all $b\in\mathbb{R}$,
\begin{equation}\label{eq:OU:profile}
  \lim_{n\to\infty}\mathrm{dist}(\mathrm{Law}(Z_{t_{n,b}})\mid P_n^0)=\phi(b),
\end{equation}
where $t_{n,b}$ and $\phi(b)$ are as in Table \ref{tb:OU:profile}. The
cutoff window is always of size $1$.

\begin{table}[h]
  \centering
  \begin{tabular}{r||l|l|l}
    & $a=+\infty$
    & $a=0$
    & $a\in(0,+\infty)$\\[1em]
    $t_{n,b}$ &&\\[.5em]        
    Hellinger
    & $\log(|z_0^n|\sqrt{n} )+b$
    & $\frac{\log n}{4}+b$
    &$\frac{\log n}{4}+b$\\[.5em]
    Kullback
    & $\log(|z_0^n|\sqrt{n} )+b$
    & $\frac{\log n}{4}+b$
    &$\frac{\log n}{4}+b$\\[.5em]
    $\chi^2$
    & $\log(|z_0^n|\sqrt{n} )+b$
    & $\frac{\log n}{4}+b$
    & $\frac{\log n}{4}+b$
    \\[.5em]
    Fisher
    & $\log(|z_0^n|n )+b$ 
    & $\frac{\log n}{2}+b$
    & $\frac{\log n}{2}+b$\\[1em]
    $\phi(b)$ & & & \\[.5em]
    Hellinger
    & $\sqrt{1-\mathrm{e}^{-\frac{1}{8}\mathrm{e}^{-2b}}}$
    & $\sqrt{1-\mathrm{e}^{-\frac{1}{16}\mathrm{e}^{-4b}}}$
    & $\sqrt{1-\mathrm{e}^{-\frac{1}{8} {a\mathrm{e}^{-2b}} -\frac{1}{16}\mathrm{e}^{-4b} }}$\\[.5em]
    Kullback
    & $\frac{1}{2}\mathrm{e}^{-2b}$
    & $\frac{1}{4}\mathrm{e}^{-4b}$
    & $\frac{1}{2} a \mathrm{e}^{-2b}+\frac{1}{4}\mathrm{e}^{-4b}$\\[.5em]
    $\chi^2$
    & $\mathrm{e}^{\mathrm{e}^{-2b} }-1$
    & $\mathrm{e}^{\frac{1}{2}\mathrm{e}^{-4b} }-1$
    & $\mathrm{e}^{a\mathrm{e}^{-2b}+\frac{1}{2}\mathrm{e}^{-4b} }-1$\\[.5em]
    Fisher
    & $\mathrm{e}^{-2b}$
    & $\mathrm{e}^{-4b}$
    & $a\mathrm{e}^{-2b}+\mathrm{e}^{-4b}$
  \end{tabular}
  \bigskip
  \caption{\label{tb:OU:profile}Values of $t_{n,b}$ and $\phi(b)$ for
    the cutoff profile of the OU process in \eqref{eq:OU:profile}.}
\end{table}

Since the total variation distance is not expressed in a simple explicit
manner, further computations are needed to extract the precise cutoff profile,
which is given in the following lemma:
  
\begin{lemma}[Cutoff profile in $\mathrm{TV}$ for OU]\label{lemma:cutoff profile}
  Let $Z^n=(Z_t^n)_{t\geq 0}$ be the OU process \eqref{eq:OU}, started from
  $z_0^n\in \mathbb{R}^n$, and let $P_n^0$ be its invariant law. Assume as
  in \eqref{eq:nzn2} that
  $a:=\lim_{n\to\infty}|z_0^n|^2\sqrt{n}\in[0,+\infty]$, and let $t_{n,b}$
  be as in Table \eqref{tb:OU:profile} for Hellinger. Then, for all
  $b\in \mathbb{R}$, we have
  \begin{equation*}
    \lim_{n\to\infty}\Vert\mathrm{Law}({Z}_{t_{n,b}}^n)-P_n^0\Vert_{\mathrm{TV}}
    =\phi(b),
  \end{equation*}
  where
    \begin{equation*}
      \phi(b):=
      \begin{cases}
        \displaystyle\mathrm{erf}\Bigr(\frac{\mathrm{e}^{-b}}{2\sqrt{2}}\Bigr)
        & \text{if $a=+\infty$} \\[1em]
        \displaystyle\mathrm{erf}\Bigr(\frac{\mathrm{e}^{-2b}}{4}\Bigr)
        & \text{if $a=0$} \\[1em]
        \displaystyle\mathrm{erf}\Bigr(\frac{\sqrt{2a\mathrm{e}^{-2b}+\mathrm{e}^{-4b}}}{4}\Bigr)
        & \text{if $a\in(0,+\infty)$}
      \end{cases},
    \end{equation*}
    where $\displaystyle\mathrm{erf}(u):=\frac{1}{\sqrt{\pi}}\int_{|t|\leq
      u}\mathrm{e}^{-t^2}\mathrm{d}t=\mathbb{P}(|X|\leq\sqrt{2}u)$ with
    $X\sim\mathcal{N}(0,1)$ is the \emph{error function}.
  \end{lemma}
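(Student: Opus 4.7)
The plan is to reduce the total variation computation to a two-dimensional problem by sufficiency, upgrade the chi-squared CLT to total variation, and conclude via the classical formula for TV between two-dimensional Gaussians.

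First I would use the orthogonal invariance of $P_n^0=\mathcal{N}(0,I_n/n)$ to reduce to $z_0^n=(|z_0^n|,0,\ldots,0)$, and write $\tau_n:=\mathrm{e}^{-2t_{n,b}}$, $\mu_n:=|z_0^n|\mathrm{e}^{-t_{n,b}}$. Under both $P_n^0$ and $Q_n:=\mathrm{Law}(Z_{t_{n,b}}^n)$ the block $(z_2,\ldots,z_n)$ is spherically symmetric and independent of $z_1$, so $(z_1,R^2)$ with $R^2:=z_2^2+\cdots+z_n^2$ is a sufficient statistic and
\[
  \|\mathrm{Law}(Z_{t_{n,b}}^n)-P_n^0\|_{\mathrm{TV}}
  =\|\mathrm{Law}_{Q_n}(z_1,R^2)-\mathrm{Law}_{P_n^0}(z_1,R^2)\|_{\mathrm{TV}}.
\]
I would then standardize via $U:=\sqrt{n}\,z_1$ and $\tilde W:=(nR^2-(n-1))/\sqrt{2(n-1)}$, which remain independent under both measures. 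Under $P_n^0$: $U\sim\mathcal{N}(0,1)$ and $\tilde W\sim(\chi_{n-1}^2-(n-1))/\sqrt{2(n-1)}$. Under $Q_n$: $U\sim\mathcal{N}(\sqrt{n}\mu_n,1-\tau_n)$ and $\tilde W$ has the law of $-\tau_n\sqrt{(n-1)/2}+(1-\tau_n)(\chi_{n-1}^2-(n-1))/\sqrt{2(n-1)}$.

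The key step is the local CLT in total variation: $(\chi_{n-1}^2-(n-1))/\sqrt{2(n-1)}$ converges to $\mathcal{N}(0,1)$ in TV. I would prove this via Stirling on $\Gamma((n-1)/2)$ followed by Scheff\'e's lemma applied to the explicit chi-squared density. Combined with TV continuity of 1D Gaussian laws in the parameters and subadditivity of TV under products of independent marginals, this yields $\mathrm{Law}_{P_n^0}(U,\tilde W)\to\mathcal{N}(\mathbf 0,I_2)$ and $\mathrm{Law}_{Q_n}(U,\tilde W)\to\mathcal{N}(\mathbf m_b,I_2)$ in TV, with
\[
  \mathbf m_b:=\Bigl(\lim_{n\to\infty}\sqrt{n}\mu_n,\ -\lim_{n\to\infty}\tau_n\sqrt{(n-1)/2}\Bigr).
\]
The triangle inequality for TV then gives $\lim_n\|\mathrm{Law}(Z_{t_{n,b}}^n)-P_n^0\|_{\mathrm{TV}}=\mathrm{erf}(\|\mathbf m_b\|/(2\sqrt{2}))$.

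It remains to compute $\mathbf m_b$ in each regime from the values of $t_{n,b}$ given in Table \ref{tb:OU:profile} for Hellinger. For $a=+\infty$, $\sqrt{n}\mu_n\to\mathrm{e}^{-b}$ and $\tau_n\sqrt{n/2}\to0$, so $\mathbf m_b=(\mathrm{e}^{-b},0)$. For $a=0$, $\sqrt{n}\mu_n=n^{1/4}|z_0^n|\mathrm{e}^{-b}\to0$ and $\tau_n\sqrt{n/2}\to\mathrm{e}^{-2b}/\sqrt{2}$, giving $\mathbf m_b=(0,-\mathrm{e}^{-2b}/\sqrt{2})$. For $a\in(0,\infty)$, combining both yields $\mathbf m_b=(\sqrt{a}\,\mathrm{e}^{-b},-\mathrm{e}^{-2b}/\sqrt{2})$. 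Substituting $\|\mathbf m_b\|$ into $\mathrm{erf}(\cdot/(2\sqrt{2}))$ reproduces the three formulas for $\phi(b)$ in the statement. The main technical obstacle is the upgrade from weak to total variation convergence for the $\chi_{n-1}^2$ local CLT; once that is in hand everything reduces to elementary Gaussian computations.
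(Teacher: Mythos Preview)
Your approach is correct and takes a genuinely different route from the paper. The paper works directly with the log-likelihood ratio: it writes $\|\mu-\nu\|_{\mathrm{TV}}=\mu(\psi\le c)-\nu(\psi\le c)$ for the quadratic form $\psi(x)=\frac{|x-m_1|^2}{\sigma_1^2}-\frac{|x|^2}{\sigma_2^2}$, observes that under each Gaussian $\psi$ is a sum of $n$ i.i.d.\ terms, and applies the \emph{weak} CLT to each of the two probabilities separately; the profile then emerges as the difference of two standard normal CDFs, which is rewritten as an $\mathrm{erf}$.

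Your argument instead isolates the two-dimensional sufficient statistic $(z_1,R^2)$, upgrades the chi-square CLT to \emph{total variation} convergence, and reads off the limit from the closed-form TV between two isotropic Gaussians in $\mathbb{R}^2$. This is conceptually cleaner --- it makes transparent that the profile is driven by exactly two competing effects (the residual mean $\sqrt{n}\mu_n$ and the residual variance defect $\tau_n\sqrt{n/2}$), which is precisely the bias/variance competition the paper discusses informally in Remark~\ref{remark:competition}. The cost is that you need the stronger TV-CLT for $\chi^2_{n-1}$; the paper only needs the classical CLT since it evaluates probabilities of half-lines. Your outline for the TV-CLT via Stirling and Scheff\'e is standard and works. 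The final numerical check of $\|\mathbf m_b\|$ in the three regimes is correct.
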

  
  \begin{proof}[Proof of Lemma \ref{lemma:cutoff profile}]
    The idea is to exploit the fact that we consider Gaussian product measures
    (the covariance matrices are multiple of the identity), which allows a
    finer analysis than for instance in \cite[Le.~3.1]{devroye}. We begin
    with a rather general step. Let $\mu$ and $\nu$ be two probability
    measures on $\mathbb{R}^n$ with densities $f$ and $g$ with respect to the
    Lebesgue measure $\mathrm{d}x$. We have then
  \begin{equation}
    \Vert \mu-\nu\Vert_{\mathrm{TV} }
    =\frac{1}{2}\int |f-g|\mathrm{d}x
    =\frac{1}{2}\Bigr(\int (f-g)\mathbf{1}_{g\leq f}\mathrm{d}x
    -\int(f-g)\mathbf{1}_{f\leq g}\mathrm{d}x\Bigr),
  \end{equation}
  and since
  \begin{equation*}
    -\int(f-g)\mathbf{1}_{f\leq g}\mathrm{d}x
    =-\int(f-g)(1-\mathbf{1}_{g<f})\mathrm{d}x
    =\int(f-g)\mathbf{1}_{g<f}\mathrm{d}x
    =\int(f-g)\mathbf{1}_{g\leq f}\mathrm{d}x,
  \end{equation*}
  we obtain
  \begin{equation}\label{eq:expression TV}
    \Vert \mu-\nu\Vert_{\mathrm{TV} }
    =\int (f-g)\mathbf{1}_{g\leq f}\mathrm{d}x   
    =\mu(g\leq f)-\nu(g\leq f).
  \end{equation}
  In particular, when $\mu=\mathcal{N}(m_1,\sigma_1^2I_n)$ and
  $\nu=\mathcal{N}(m_2,\sigma_2^2I_n)$ then $g(x)\leq f(x)$ is equivalent to
  \begin{equation}
    \psi(x):=\frac{|x-m_1|^2}{\sigma_1^2}-\frac{|x-m_2|^2}{\sigma_2^2}
    \leq n\log\frac{\sigma_2^2}{\sigma_1^2},
  \end{equation}
  for all $x\in\mathbb{R}^n$, and therefore, with $Z_1\sim\mu$ and
  $Z_2\sim\nu$, we get
  \begin{equation}\label{eq:TVpsi}
    \Vert \mu-\nu\Vert_{\mathrm{TV} }
    =\mathbb{P}\Bigr(\psi(Z_1)\leq n\log\frac{\sigma_2^2}{\sigma_1^2}\Bigr)
    -\mathbb{P}\Bigr(\psi(Z_2)\leq n\log\frac{\sigma_2^2}{\sigma_1^2}\Bigr).
  \end{equation}
  Let us assume from now on that $m_2=0$ and $\sigma_1\neq\sigma_2$. We can
  then gather the quadratic terms as
  \begin{equation}
    \psi(x)=\Bigr(1-\frac{\sigma_1^2}{\sigma_2^2}\Bigr)
    \frac{|x-\tilde{m}_1|^2}{\sigma_1^2}
    +\Bigr(\frac{1}{\sigma_1^2}-\frac{1}{(1-\frac{\sigma_1^2}{\sigma_2^2})\sigma_1^2}\Bigr)|m_1|^2
    \quad\text{where}\quad
    \tilde{m}_1:=\frac{1}{1-\frac{\sigma_1^2}{\sigma_2^2}}m_1.
  \end{equation}
  We observe at this step that the random variable $\frac{|Z_1-\tilde{m}_1|^2}{\sigma_1^2}$ follows a
  noncentral chi-squared distribution, which depends only on $n$ and on the
  noncentrality parameter
  \begin{equation}
    \lambda_1:=\frac{|m_1-\tilde{m}_1|^2}{\sigma_1^2}
    =\frac{\sigma_1^2}{(\sigma_2^2-\sigma_1^2)^2}|m_1|^2.
  \end{equation}
  Similarly, the random variable $\frac{|Z_2-\tilde{m}_1|^2}{\sigma_2^2}$ follows a
  noncentral chi-squared distribution, which depends only on $n$ and on the
  noncentrality parameter
  \begin{equation}
    \lambda_2:=\frac{|\tilde{m}_1|^2}{\sigma_1^2}
    =\frac{\sigma_2^4}{\sigma_1^2(\sigma_2^2-\sigma_1^2)^2}|m_1|^2.
  \end{equation}
  It follows that the law of $\psi(Z_1)$ and the law of $\psi(Z_2)$ depend
  over $m_1$ only via $|m_1|$. Hence
  \begin{equation}
    \psi(Z_1)\overset{\mathrm{d}}{=}X_1+\cdots+X_n
    \quad\text{and}\quad
    \psi(Z_2)\overset{\mathrm{d}}{=}Y_1+\cdots+Y_n
  \end{equation}
  where $X_1,\ldots,X_n$ and $Y_1,\ldots,Y_n$ are two sequences of i.i.d.\
  random variables for which the mean and variance depends only (and
  explicitly) on $|m_1|$, $\sigma_1$, $\sigma_2$. Note in particular that
  these means and variances are given by $\frac{1}{n}$ the ones of $\psi(Z_1)$
  and $\psi(Z_2)$. Now we specialize to the case where
  $\mu=\mathrm{Law}(Z^n_t)=\mathcal{N}(z_0^n\mathrm{e}^{-t},\frac{1-\mathrm{e}^{-2t}}{n}I_n)$
  and $\nu=\mathrm{Law}(Z^n_\infty)=\mathcal{N}(0,\frac{1}{n}I_n)=P_n^0$, and
  we find
      \begin{equation*}
  	\mathbb{E}[\psi(Z_1)]
  	=n\Bigr(1-\frac{\sigma_t^2}{\sigma_{\infty}^2}\Bigr)-\frac{|z_0^n|^2\mathrm{e}^{-2t} }{\sigma_{\infty}^2},\quad
  	\mathbb{E}[\psi(Z_2)]
  	=n\Bigr(\frac{\sigma_{\infty}^2}{\sigma_t^2}-1\Bigr)+\frac{|z_0^n|^2\mathrm{e}^{-2t}}{\sigma_t^2}
  \end{equation*}
  while
  \begin{equation*}
  	\mathrm{Var}[\psi(Z_1)]
  	=2n\Bigr(\frac{1}{\sigma_t^2}-\frac{1}{\sigma_{\infty}^2}\Bigr)^2\sigma_t^4+4\frac{\sigma_{t}^2}{\sigma_{\infty}^4}|z_0^n|^2\mathrm{e}^{-2t},
  	\quad
  	\mathrm{Var}[\psi(Z_2)]
  	=2n\Bigr(\frac{1}{\sigma_t^2}-\frac{1}{\sigma_{\infty}^2}\Bigr)^2\sigma_{\infty}^4+4\frac{\sigma_\infty^2}{\sigma_{t}^4}|z_0^n|^2\mathrm{e}^{-2t}.
  \end{equation*}
    Let $t=t_{n,b}$ be as in Table \ref{tb:OU:profile} for Hellinger. Using \eqref{eq:TVpsi} and the central limit theorem for the i.i.d.\ random variables $X_1,\ldots,X_n$ and
    $Y_1,\ldots,Y_n$, we get, with $Z\sim\mathcal{N}(0,1)$,
    \begin{equation*}
      \bigr\Vert\mathrm{Law}({Z}_t^n)-P_n^0\bigr\Vert_{\mathrm{TV}}
      = \mathbb{P}(Z\leq \gamma_{n,t})-\mathbb{P}(Z\leq \tilde{\gamma}_{n,t})+o_n(1), 
    \end{equation*}
    where
    \begin{equation*}
      \gamma_{n,t}:=\frac{-n\log(1-\mathrm{e}^{-2t})-\mathbb{E}[\psi(Z^n_t)]
      }{\sqrt{\mathrm{Var}[\psi(Z^n_t)]} },
      \quad
      \tilde{\gamma}_{n,t}:=\frac{-n\log(1-\mathrm{e}^{-2t})-\mathbb{E}[\psi(Z^n_\infty)]}{\sqrt{\mathrm{Var}[\psi(Z^n_\infty)]} }.
    \end{equation*}
    Expanding $\gamma_{n,t_{n,b} }$ gives the cutoff profile. Let us detail
    the computations in the most involved case
    $\lim_{n\to\infty}|z_0^n|^2\sqrt{n}=a\in(0,+\infty)$. For all
    $b\in \mathbb{R}$, recall $t_{n,b}=\frac{\log n}{4}+b$. One may check that
    \begin{equation*}
      -n\log (1-\mathrm{e}^{-2t_{n,b} })-\mathbb{E}[\psi(Z^n_{t_{n,b }}) ]=\frac{1}{2}\mathrm{e}^{-4b}+a \mathrm{e}^{-2b}+o_n(1),
    \end{equation*}
    \begin{equation*}
      -n\log (1-\mathrm{e}^{-2t_{n,b} })-\mathbb{E}[\psi(Z^n_\infty)]
      =-\frac{1}{2}\mathrm{e}^{-4b}-a \mathrm{e}^{-2b}+o_n(1),
    \end{equation*}
    \begin{equation*}
      \mathrm{Var}[\psi(Z^n_{t_{n,b}})]=2\mathrm{e}^{-4b}+4a
      \mathrm{e}^{-2b}+o_n(1), 
      \quad \mathrm{Var}[\psi(Z^n_\infty)]=2\mathrm{e}^{-4b}+4a \mathrm{e}^{-2b}+o_n(1).
    \end{equation*}
    It follows that
    \begin{equation*}
      \lim_{n\to\infty}\bigr\Vert\mathrm{Law}({Z}_{t_{n,b}}^n)-P_n^0\bigr\Vert_{\mathrm{TV}}=\mathbb{P}\Bigr(|Z|\leq \frac{1}{2\sqrt{2}}\sqrt{\mathrm{e}^{-4b }+2a \mathrm{e}^{-2b} }\Bigr)=\mathrm{erf}\Bigr(\frac{1}{4}\sqrt{\mathrm{e}^{-4b}+2a\mathrm{e}^{-2b} }\Bigr).
    \end{equation*}
    The other cases are similar.
  \end{proof}

\section{General exactly solvable aspects}
\label{se:proofs:DOUOU:DOUCIR:moments}

In this section, we prove Theorem \ref{th:DOUOU} and Corollary
\ref{cor:LowerBd}.

The proof of Theorem \ref{th:DOUOU} is based on the fact that the polynomial
functions $\pi(x)=x_1+\cdots+x_n$ and $|x|^2=x_1^2+\cdots+x_n^2$ are, up to an
additive constant for the second, eigenfunctions of the dynamics associated to
the spectral values $-1$ and $-2$ respectively, and that their ``carré du
champ'' is affine. In the matrix cases $\beta\in\{1,2\}$, these functions
correspond to the dynamics of the trace, the dynamics of the squared
Hilbert--Schmidt trace norm, and the dynamics of the squared trace. It is
remarkable that this phenomenon survives beyond these matrix cases, yet
another manifestation of the Gaussian ``ghosts'' concept due to Edelman, see
for instance \cite{zbMATH05938051}.

\begin{proof}[Proof of Theorem \ref{th:DOUOU}]
  The process $Y_t := \pi(X_t^n)$ solves
  \[
    \mathrm{d}Y_t = \sum_{i=1}^n \mathrm{d} X_t^{n,i} =  \sqrt{\frac{2}{n}} \sum_{i=1}^n\mathrm{d}B^i_t
    - \sum_{i=1}^n X_t^{n,i} \mathrm{d}t
    +\frac{\beta}{n}\sum_{j\neq
      i}\frac{\mathrm{d}t}{X_t^{n,i}-X_t^{n,j}}.
  \]
  By symmetry, the double sum vanishes. Note that the process
  $W_t := \frac1{\sqrt n} \sum_{i=1}^n B^i_t$ is a standard one dimensional
  BM, so that
  \(
    \mathrm{d}Y_t = \sqrt{2} \mathrm{d}W_t - Y_t \mathrm{d}t.
  \)
  This proves the first part of the statement.\\
  We turn to the second part. Recall that $X_t \in D_n$ for all $t>0$. By
  Itô's formula
  \[
    \mathrm{d} (X_t^{n,i})^2 = \sqrt{\frac{8}{n}} X_t^{n,i} \mathrm{d}B^i_t -
    2 (X_t^{n,i})^2 \mathrm{d}t + 2\frac{\beta}{n} X_t^{n,i} \sum_{j:j\neq
      i}\frac{\mathrm{d}t}{X_t^{n,i}-X_t^{n,j}} + \frac{2}{n} \mathrm{d}t.
  \]
  Set
  $W_t := \sum_{i=1}^n \int_0^t \frac{X_s^{n,i}}{|X^n_s|} \mathrm{d}B^i_s$.
  The process ${(W_t)}_{t\geq0}$ is a BM by the Lévy characterization since
  \[
    \langle W\rangle_t
    =\int_0^t\frac{\sum_{i=1}^n(X^{n,i}_s)^2}{|X^n_s|^2}\mathrm{d}s
    =t.
  \]
  Furthermore, a simple computation shows that
  \[
    \sum_{i=1}^n X_t^{n,i} \sum_{j:j\neq i}\frac{1}{X_t^{n,i}-X_t^{n,j}} =
    \frac{n(n-1)}{2}.
  \]
  Consequently the process $R_t := |X_t^n|^2$ solves
  \[
    \mathrm{d} R_t = \sqrt{\frac{8}{n} R_t} \mathrm{d}W_t + \Big(2 +
    \beta(n-1) - 2 R_t\Big) \mathrm{d}t,
  \]
  and is therefore a CIR process of parameters $a=2+\beta(n-1)$, $b=2$, and $\sigma = \sqrt{8/n}$.\\
  When $d=\frac{\beta}{2}n^2 + (1-\frac{\beta}{2})n$ is a positive integer,
  the last property of the statement follows from the connection between OU
  and CIR recalled right before the statement of the theorem.
\end{proof}

The last proof actually relies on the following general observation. Let $X$
be an $n$-dimensional continuous semi-martingale solution of
\[
  \mathrm{d}X_t=\sigma(X_t)\mathrm{d}B_t+b(X_t)\mathrm{d}t
\]
where $B$ is a $n$-dimensional standard BM, and where
\[
  x\in\mathbb{R}^n\mapsto\sigma(x)\in\mathcal{M}_{n,n}(\mathbb{R})
  \quad\text{and}\quad
  x\in\mathbb{R}^n\mapsto b(x)\in\mathbb{R}^n
\]
are Lipschitz. The infinitesimal generator of the Markov semigroup is given by
\[
  \G(f)(x)
  =\frac{1}{2}\sum_{i,j=1}^na_{i,j}(x)\partial_{i,j}f(x)
  +\sum_{i=1}^nb_i(x)\partial_if(x),
  \quad\text{where}\quad
  a(x)=\sigma(x)(\sigma(x))^\top,    
\]
for all $f\in\mathcal{C}^2(\mathbb{R}^n,\mathbb{R})$ and $x\in\mathbb{R}^n$.
Then, by Itô's formula, the process $M^f={(M^f_t)}_{t\geq0}$ given by
\[
  M^f_t=f(X_t)-f(X_0)-\int_0^t(\G f)(X_s)\mathrm{d}s
  =\sum_{i,k=1}^n\int_0^t\partial_if(X_s)\sigma_{i,k}(X_s)\mathrm{d}B_s^k
\]
is a local martingale, and moreover, for all $t\geq0$,
\[
  \langle M^f\rangle_t
  =\int_0^t\Gamma(f)(X_s)\mathrm{d}s
  \quad\text{where}\quad
  \Gamma(f)(x)=|\sigma(x)^\top\nabla f(x)|^2=a(x)\nabla f\cdot\nabla f.
\]
The functional quadratic form $\Gamma$ is known as the ``carré du champ''
operator.\\
If $f$ is an eigenfunction of $\G$ associated to the spectral value $\lambda$
in the sense that $\G f=\lambda f$ (note by the way that $\lambda\leq0$ since
$\G$ generates a Markov process), then we get
\[
  f(X_t)=f(X_0)+\lambda\int_0^tf(X_s)\mathrm{d}s
  +M_t^f,
  \quad\text{in other words}\quad
  \mathrm{d}f(X_t)=\mathrm{d}M^f_t+\lambda f(X_t)\mathrm{d}t.
\]
Now if $\Gamma(f) = c$ (as in the first part of the theorem), then by the Lévy
characterization of Brownian motion, the continuous local martingale
$W:=\frac{1}{\sqrt{c}}M^f$ starting from the origin is a standard BM and we
recover the result of the first part of the theorem. On the other hand, if
$\Gamma(f) = cf$ (as in the second part of the theorem), then by the Lévy
characterization of BM the local martingale
\[
  W := \int_0^t \frac{1}{\sqrt{cf(X_s)}}\mathrm{d}M_s^f
\]
is a standard BM and we recover the result of the second part.

\medskip

At this point, we observe that the infinitesimal generator of the CIR process
$R$ is the Laguerre partial differential operator
\begin{equation}\label{eq:LaguerreDOU}
  L(f)(x)=\frac{4}{n}xf''(x)+(2+\beta(n-1)-2x)f'(x).
\end{equation}
This operator leaves invariant the set of polynomials of degree less than or
equal to $k$, for all integer $k\geq0$, a property inherited from
\eqref{eq:G}. We will use this property in the following proof.

\subsection{Proof of Corollary \ref{cor:LowerBd}}
\label{se:DOUWTV:lb}

By Theorem \ref{th:DOUOU}, $Z = \pi(X^n)$ is an OU process
in $\mathbb{R}$ solution of the stochastic differential equation
\[
  Z_0 = \pi(X_0^n),
\quad
  \mathrm{d}Z_t = \sqrt 2 \mathrm{d}B_t - Z_t \mathrm{d}t,
\]
where $B$ is a standard one-dimensional BM. By Lemma
\ref{le:Mehler},
$Z_t \sim \mathcal{N}(Z_0 \mathrm{e}^{-t}, 1-\mathrm{e}^{-2t})$ for all
$t\ge 0$ and the equilibrium distribution is
$P_n^\beta \circ \pi^{-1} = \mathcal{N}(0, 1)$. Using the contraction property stated
in Lemma \ref{le:contraction}, the comparison between Hellinger and TV of
Lemma \ref{le:distineqs} and the explicit expressions for Gaussian
distributions of Lemma \ref{le:gauss}, we find
\begin{align*}
\|\mathrm{Law}(X^n_t)-P_n^\beta\|_{\mathrm{TV}}
      &\geq\|\mathrm{Law}(Z_t)-P_n^\beta\circ \pi^{-1}\|_{\mathrm{TV}}\\
      &\geq  \mathrm{Hellinger}^2(\mathrm{Law}(Z_t), P_n^\beta\circ \pi^{-1})\\
      &= 1 - 
        \frac{(1-\mathrm{e}^{-2t})^{1/4}}
        {(1-\frac{1}{2}\mathrm{e}^{-2t})^{1/2}}
        \exp\Bigr(-\frac{\pi(X^n_0)^2\mathrm{e}^{-2t}}{4(2-\mathrm{e}^{-2t})}\Bigr).
\end{align*}
Setting $c_n := \log(\vert\pi(X^n_0)\vert)$ and assuming that
$\lim_{n\to\infty}c_n=\infty$, we deduce that for all $\varepsilon \in (0,1)$
 \[
    \lim_{n\to\infty}
    \|\mathrm{Law}(X^n_{c_n(1-\varepsilon)})-P_n^\beta\|_{\mathrm{TV}}
    =1.\]
The comparison between $\mathrm{Hellinger}$ and $\mathrm{TV}$ of
Lemma \ref{le:distineqs} allows to deduce that this remains true for the Hellinger distance.

We turn to Kullback. The contraction property stated in Lemma
\ref{le:contraction} and the explicit expressions for Gaussian distributions
of Lemma \ref{le:gauss} yield
\begin{align*}
2\mathrm{Kullback}(\mathrm{Law}(X^n_t) \mid P_n^\beta )
      &\geq 2\mathrm{Kullback}(\mathrm{Law}(Z_t) \mid P_n^\beta\circ \pi^{-1})\\
      &= \pi(X^n_0)^2\mathrm{e}^{-2t} -\mathrm{e}^{-2t} - \log(1-\mathrm{e}^{-2t}).
\end{align*}
This is enough to deduce that
\[
    \lim_{n\to\infty}
    \mathrm{Kullback}(\mathrm{Law}(X^n_{(1-\varepsilon)c_n}) \mid P_n^\beta)
    =+\infty.\]

The situation is similar for $\chi^2$: the contraction property stated in Lemma \ref{le:contraction} and the explicit expressions for Gaussian
distributions of Lemma \ref{le:gauss} yield
\begin{align*}
	\chi^2(\mathrm{Law}(X^n_t) \mid P_n^\beta )
	&\geq \chi^2(\mathrm{Law}(Z_t) \mid P_n^\beta\circ \pi^{-1})\\
	&= -1+\frac1{\sqrt{1-\mathrm{e}^{-4t}}} \exp\left(\frac1{1+\mathrm{e}^{-2t}}(1-\pi(X^n_0) \mathrm{e}^{-t})^2\right),
\end{align*}
so that
\[
\lim_{n\to\infty}
\chi^2(\mathrm{Law}(X^n_{(1-\varepsilon)c_n}) \mid P_n^\beta)
=+\infty.\]

\medskip

Regarding the Wasserstein distance, we have
$\left\|\pi\right\|_{\mathrm{Lip}}:=\sup_{x\neq
  y}\frac{|\pi(x)-\pi(y)|}{|x-y|}\leq\sqrt{n}$ from the Cauchy--Schwarz
inequality, and by Lemma \ref{le:contraction}, for all probability measures
$\mu$ and $\nu$ on $\mathbb{R}^n$,
\begin{equation}\label{eq:W2contract}
  \mathrm{Wasserstein}(\mu\circ\pi^{-1},\nu\circ\pi^{-1})
  \leq\sqrt{n}\mathrm{Wasserstein}(\mu,\nu).
\end{equation}
Using the explicit expressions for Gaussian distributions of Lemma \ref{le:gauss}, we thus find 
\begin{align*}
  \mathrm{Wasserstein}^2(\mathrm{Law}(X_t^n), P_n^\beta)
  &\ge \frac1{n} \mathrm{Wasserstein}^2(\mathrm{Law}(Z_t), P_n^\beta\circ \pi^{-1})\\
  &= \frac1{n}\Bigr(\pi(X^n_0)^2 \mathrm{e}^{-2t} + 2 - \mathrm{e}^{-2t} - 2\sqrt{1-\mathrm{e}^{-2t}}\Bigr).
\end{align*}
Setting $c_n:= \log\Big(\frac{\vert\pi(x_0^n)\vert}{\sqrt{n}}\Big)$ and assuming $c_n \to\infty$ as $n\to\infty$, we thus deduce that for all $\varepsilon \in (0,1)$
\[
  \lim_{n\to\infty}
  \mathrm{Wasserstein}(\mathrm{Law}(X^n_{(1-\varepsilon)c_n}),P_n^\beta)
  =+\infty.
\]

\section{The random matrix cases}
\label{se:DOU12}

In this section, we prove Theorem \ref{th:DOU12} and Corollary \ref{cor:DOU12}
that cover the matrix cases $\beta \in \{1,2\}$. For these values of $\beta$,
the DOU process is the image by the spectral map of a matrix OU process,
connected to the random matrix models $\mathrm{GOE}$ and $\mathrm{GUE}$. We
could consider the case $\beta=4$ related to $\mathrm{GSE}$. Beyond these
three algebraic cases, it could be possible for an arbitrary $\beta\geq1$ to
use random tridiagonal matrices dynamics associated to $\beta$ Dyson
processes, see for instance \cite{HolcombPaquette01}.

The next two subsections are devoted to the proof of Theorem \ref{th:DOU12} in
the $\beta =2$ and $\beta = 1$ cases respectively. The third section provides
the proof of Corollary \ref{cor:DOU12}.

\subsection{Hermitian case ($\beta=2$)}

Let $\mathrm{Herm}_n$ be the set of $n\times n$ complex Hermitian matrices,
namely the set of $h\in\mathcal{M}_{n,n}(\mathbb{C})$ with
$h_{i,j}=\overline{h_{j,i}}$ for all $1\leq i,j\leq n$. An element
$h\in\mathrm{Herm}_n$ is parametrized by the $n^2$ real variables
$(h_{i,i})_{1\leq i\leq n}$, $(\Re h_{i,j})_{1\leq i<j\leq n}$,
$(\Im h_{i,j})_{1\leq i<j\leq n}$. We define, for $h\in\mathrm{Herm}_n$ and $1\leq i,j\leq n$,
\begin{equation}\label{eq:GUEij}
  \pi_{i,j}(h)
  =\begin{cases}
    h_{i,i} & \text{if $i=j$}\\
    \sqrt 2\, \Re h_{i,j} & \text{if $i<j$}\\
    \sqrt 2\, \Im h_{j,i} & \text{if $i>j$}
  \end{cases}.
\end{equation}
Note that
\[ \mathrm{Tr}(h^2)=\sum_{i,j=1}^n|h_{i,j}|^2=\sum_{i=1}^nh_{i,i}^2+2\sum_{i<j}(\Re
h_{i,j})^2+2\sum_{i<j}(\Im h_{i,j})^2 = \sum_{i,j} \pi_{i,j}(h)^2.\]
We thus identify $\mathrm{Herm}_n$ with $\mathbb{R}^n\times\mathbb{R}^{2\frac{n^2-n}{2}}=\mathbb{R}^{n^2}$, this identification is isometrical provided $\mathrm{Herm}_n$ is endowed with the norm $\sqrt{\mathrm{Tr}(h^2)}$ and $\mathbb{R}^{n^2}$ with the Euclidean norm.

\medskip

The Gaussian Unitary Ensemble $\mathrm{GUE}_n$ is the Gaussian law on
$\mathrm{Herm}_n$ with density
\begin{equation}\label{eq:GUE}
  h\in\mathrm{Herm}_n
  \mapsto
  \frac{\mathrm{e}^{-\frac{n}{2}\mathrm{Tr}(h^2)}}{C_n}
  \quad\text{where}\quad
  C_n:=\int_{\mathbb{R}^{n^2}}\mathrm{e}^{-\frac{n}{2}\mathrm{Tr}(h^2)}
  \prod_{i=1}^n\mathrm{d}h_{i,i}\prod_{i<j}\mathrm{d}\Re h_{i,j}\prod_{i<j}\mathrm{d}\Im h_{i,j}.
\end{equation}
If $H$ is a random $n\times n$
Hermitian matrix then $H\sim\mathrm{GUE}_n$ if and only if the $n^2$ real
random variables $\pi_{i,j}(H)$, $1\leq i,j\leq n$, are independent Gaussian
random variables with
\begin{equation}
  \pi_{i,j}(H)\sim
  \mathcal{N}\Bigr(0,\frac{1}{n}\Bigr),
  \quad 1\leq i,j\leq n.
\end{equation}
The law $\mathrm{GUE}_n$ is the unique invariant law of the Hermitian matrix
OU process ${(H_t)}_{t\geq0}$ on $\mathrm{Herm}_n$ solution
of the stochastic differential equation
\begin{equation}\label{Eq:Herm}
  H_0=h_0\in\mathrm{Herm}_n,\quad
  \mathrm{d}H_t=\sqrt{\frac{2}{n}}\mathrm{d}B_t-H_t\mathrm{d}t,
\end{equation}
where $B={(B_t)}_{t\geq0}$ is a Brownian motion on $\mathrm{Herm}_n$, in the
sense that the stochastic processes $(\pi_{i,j}(B_t))_{t\geq 0}$, $1\le i \ne j \le n$, are independent standard one-dimensional BM. The coordinates
stochastic processes ${(\pi_{i,j}(H_t))}_{t\geq0}$, $1\leq i,j\leq n$, are
independent real OU processes.

\smallskip

For any $h$ in $\mathrm{Herm}_n$, we denote by $\Lambda(h)$ the vector of the
eigenvalues of $h$ ordered in non-decreasing order. Lemma \ref{le:DOU} below
is an observation which dates back to the seminal work of Dyson
\cite{MR148397}, hence the name DOU for $X^n$. We refer to
\cite[Ch.~12]{MR3699468} and \cite[Sec.~4.3]{MR2760897} for a mathematical
approach using modern stochastic calculus.

\begin{lemma}[From matrix OU to DOU]\label{le:DOU}
  The image of $\mathrm{GUE}_n$ by the map $\Lambda$ is the Coulomb gas
  $P_n^\beta$ given by \eqref{eq:P} with $\beta=2$. Moreover the stochastic
  process $X^n={(X^n_t)}_{t\geq0}={(\Lambda(H_t))}_{t\geq0}$ is well-defined
  and solves the stochastic differential equation \eqref{eq:DOU} with
  $\beta=2$ and $x_0^n=\Lambda(h_0)$.
\end{lemma}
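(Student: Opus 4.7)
The plan is to treat the two assertions separately: the static identification of the pushforward law, and the dynamic derivation of the SDE for the eigenvalues.

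For the first assertion, I would use the spectral decomposition $H = U \,\mathrm{diag}(\lambda_1,\ldots,\lambda_n)\, U^*$ with $\lambda_1 \le \cdots \le \lambda_n$ and $U$ in the unitary group modulo the torus of diagonal phases, to perform the change of variables from the parameters $(h_{i,i},\Re h_{i,j},\Im h_{i,j})$ in \eqref{eq:GUEij} to $(\lambda_1,\ldots,\lambda_n,U)$. The classical Weyl integration formula gives a Jacobian proportional to $\prod_{i<j}(\lambda_j-\lambda_i)^{2}$ on the ordered chamber $\overline{D}_n$. Since $\mathrm{Tr}(H^2) = \sum_i \lambda_i^2 = |\Lambda(H)|^2$, the Gaussian weight $\mathrm{e}^{-\frac{n}{2}\mathrm{Tr}(H^2)}$ factors into $\mathrm{e}^{-\frac{n}{2}|\lambda|^2}$ times a unitary-invariant piece. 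Integrating out the unitary variable against Haar measure produces only a finite constant, and the remaining density on $\overline{D}_n$ matches \eqref{eq:HBE} with $\beta=2$, which is exactly $P_n^{\beta=2}$.

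For the second assertion, I would apply Itô's formula to the ordered eigenvalue functions $\lambda_i(h)$ viewed as $\mathcal{C}^2$ functions on the open set of Hermitian matrices with simple spectrum. A preliminary step is to check that, starting from $h_0 \in \mathrm{Herm}_n$, with probability one $H_t$ has simple spectrum for all $t>0$: this follows from the fact that $H_t$ admits a smooth Gaussian density on $\mathrm{Herm}_n$ for every $t>0$, together with the fact that the discriminant locus $\{h : \lambda_i(h)=\lambda_j(h) \text{ for some } i\ne j\}$ is a real algebraic subvariety of codimension at least $3$. With that in hand I would invoke the first- and second-order eigenvalue perturbation formulas: for an eigenvector $u_i$ of $h$ associated to a simple eigenvalue $\lambda_i(h)$,
\[
  \partial_{h_{k\ell}} \lambda_i(h) = \overline{u_i(k)}\, u_i(\ell),
  \qquad
  \partial^2_{h_{k\ell},h_{k'\ell'}} \lambda_i(h) \;\text{involves}\; \frac{1}{\lambda_i-\lambda_j}, \; j\ne i.
\]
Substituting these into Itô's formula for $\lambda_i(H_t)$ against \eqref{Eq:Herm}, the first-order drift contributes $-\lambda_i(H_t)\,\mathrm{d}t$; the martingale part can be recast, using orthonormality of the eigenvectors and Lévy's characterization, as $\sqrt{2/n}\,\mathrm{d}B^i_t$ for a family of independent standard one-dimensional Brownian motions $B^1,\ldots,B^n$; and the second-order Itô term, after summing the GUE covariances $\mathbb{E}[\mathrm{d}B_{k\ell}\,\mathrm{d}B_{k'\ell'}]$ carried by \eqref{eq:GUEij}, produces exactly the Coulomb repulsion $\tfrac{2}{n}\sum_{j\ne i}\frac{\mathrm{d}t}{\lambda_i-\lambda_j}$, i.e.\ the $\beta=2$ instance of \eqref{eq:DOU}.

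The main obstacle I expect is the careful justification of Itô's formula globally in time despite the singularity of the eigenvalue map at coincidences. The clean route is the one just sketched: (i) show that the collision set is not hit for $t>0$, using the smooth transition density of the matrix OU process and the codimension of the discriminant; (ii) use locally an analytic parametrization of eigenvalues/eigenvectors away from coincidences to legitimize the perturbation identities; (iii) identify the resulting driving martingale as a standard $n$-dimensional Brownian motion via Lévy's theorem by computing its brackets from \eqref{eq:GUEij}. Matching the coefficient $\beta=2$ in the repulsive drift is then a direct algebraic check coming from the complex (as opposed to real) structure of the GUE increments.
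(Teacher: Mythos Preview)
Your outline is correct and is essentially the standard textbook derivation. Note, however, that the paper does not give its own proof of this lemma: it simply records the statement and refers to Dyson's original paper and to \cite[Ch.~12]{MR3699468} and \cite[Sec.~4.3]{MR2760897} for a modern treatment via stochastic calculus. What you have sketched is precisely the argument found in those references: Weyl integration for the static pushforward, and It\^o's formula combined with first- and second-order Hadamard perturbation identities for the dynamic part, with non-collision handled via the codimension of the discriminant and the smoothness of the Gaussian transition density. One small point of care in writing it up: your formula $\partial_{h_{k\ell}}\lambda_i=\overline{u_i(k)}\,u_i(\ell)$ is stated for a complex entry, whereas the It\^o calculus runs on the $n^2$ \emph{real} coordinates $\pi_{i,j}$ of \eqref{eq:GUEij}; the bracket computation that identifies the martingale part and produces the factor $\tfrac{2}{n}$ in the repulsion must be carried out in those real variables (or equivalently by treating $\Re h_{k\ell}$ and $\Im h_{k\ell}$ separately), which is exactly what the cited references do.
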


Let $\beta=2$. Let us assume from now on that the initial value
$h_0\in\mathrm{Herm}_n$ of ${(H_t)}_{t\geq0}$ has eigenvalues $x_0^n$ where
$x^n_0$ is as in Theorem \ref{th:DOU12}. We start by proving the upper bound
on the $\chi^2$ distance stated in Theorem \ref{th:DOU12}: it will be an
adaptation of the proof of the upper bound of Theorem \ref{th:OU1} applied to
the Hermitian matrix OU process ${(H_t)}_{t\geq0}$ combined with the
contraction property of the $\chi^2$ distance. Indeed, by Lemma \ref{le:DOU}
and the contraction property of Lemma \ref{le:contraction}
\begin{equation} \chi^2(\mathrm{Law}(X_t^n) \mid P_n^\beta) \leq
    \chi^2(\mathrm{Law}(H_t) \mid \mathrm{GUE}_n).
\end{equation}
We claim now that the right-hand side tends to $0$ as $n\to\infty$ when
$t=t_n$ is well chosen. Indeed, using the identification between
$\mathrm{Herm}_n$ and $\mathbb{R}^{n^2}$ mentioned earlier, we have
$\mathrm{GUE}_n=\mathcal{N}(m_2,\Sigma_2)$ where $m_2=0$ and where $\Sigma_2$
is an $n^2\times n^2$ diagonal matrix with
\begin{equation}
	(\Sigma_2)_{(i,j),(i,j)}=\frac{1}{n}.
\end{equation}
On the other hand, the Mehler formula (Lemma \ref{le:Mehler}) gives
$\mathrm{Law}(H_t)=\mathcal{N}(m_1,\Sigma_1)$
where $m_1=\mathrm{e}^{-t}h_0$ and where $\Sigma_1$ is an $n^2\times n^2$
diagonal matrix with
\begin{equation}
	(\Sigma_1)_{(i,j),(i,j)}
	=\frac{1-\mathrm{e}^{-2t}}{n}.
\end{equation}
Therefore, using Lemma \ref{le:gauss}, the analogue of \eqref{eq:OUK} reads
\begin{equation}\label{eq:OUKM}
	\chi^2(\mathrm{Law}(H_t)\mid \mathrm{GUE}_n)
	=-1 + \frac1{(1-\mathrm{e}^{-4t})^{n^2/2}} \exp\left(n|h_0|^2\frac{\mathrm{e}^{-2t}}{1+\mathrm{e}^{-2t}}\right).
\end{equation}
where
\begin{equation}
	|h_0|^2
	=\sum_{1\leq i,j\leq n}\pi_{i,j}(h_0)^2 
	=\sum_{1\leq i,j\leq n}|(h_0)_{i,j}|^2=\mathrm{Tr}(h_0^2)=|x_0^n|^2.
\end{equation}
Taking now $c_n := \log(\sqrt{n} |x_0^n|) \vee \log (\sqrt{n})$, for any
$\varepsilon\in(0,1)$, we get
\begin{equation}
	\chi^2(\mathrm{Law}(X_{(1+\varepsilon)c_n}^n) \mid P_n^\beta)
	\leq
	\chi^2(\mathrm{Law}(H_{(1+\varepsilon)c_n}) \mid\mathrm{GUE}_n)
	\underset{n\to\infty}{\longrightarrow}0.
\end{equation}
In the right-hand side of \eqref{eq:OUKM}, the factor $n^2$ is the dimension of
the $\mathbb{R}^{n^2}$ to which $\mathrm{Herm}_n$ is identified, while the
factor $n$ in the first term is due to the $1/n$ scaling in the stochastic
differential equation of the process. This explains the difference with the
analogue \eqref{eq:OUK} in dimension $n$.

From the comparison between TV, Hellinger, Kullback and $\chi^2$ stated in
Lemma \ref{le:distineqs}, we easily deduce that the previous convergence
remains true upon replacing $\chi^2$ by $\mathrm{TV}$,
$\mathrm{Hellinger}$ or $\mathrm{Kullback}$.

\medskip

It remains to cover the upper bound for the Wasserstein distance. This
distance is more sensitive to contraction arguments: according to Lemma
\ref{le:contraction}, one needs to control the Lipschitz norm of the
``contraction map'' at stake. It happens that the spectral map, restricted to
the set $\mathrm{Herm}_n$ of $n\times n$ Hermitian matrices, is $1$-Lipschitz:
more precisely, the Hoffman--Wielandt inequality, see~\cite{MR52379} and
\cite[Th.~6.3.5]{zbMATH06125590}, asserts that for any two such matrices $A$
and $B$, denoting $\Lambda(A)=(\lambda_i(A))_{1\le i \le n}$ and
$\Lambda(B)=(\lambda_i(B))_{1\le i \le n}$ the ordered sequences of their
eigenvalues, we have
\[
  \sum_{i=1}^{n} |\lambda_i(A) - \lambda_i(B)|^2 \le \sum_{i,j} |A_{i,j} -
  B_{i,j}|^2.
\]
Applying Lemma \ref{le:contraction}, we thus deduce that
\begin{equation}
  \mathrm{Wasserstein}(\mathrm{Law}(X_t^n) , P_n^\beta)
  \leq
  \mathrm{Wasserstein}(\mathrm{Law}(H_t) , \mathrm{GUE}_n).
\end{equation}
Following the Gaussian computations in the proof of Theorem \ref{th:OU2}, we obtain
\begin{equation}\label{eq:OUKMW}
\mathrm{Wasserstein}^2(\mathrm{Law}(H_t) , \mathrm{GUE}_n) = |x_0^n|^2 \mathrm{e}^{-2t} + 2 - \mathrm{e}^{-2t} - 2\sqrt{1-\mathrm{e}^{-2t}}.
\end{equation}
Set $c_n := \log(|x_0^n|)$. If $c_n\to \infty$ as $n\to\infty$ then for all $\varepsilon\in(0,1)$ we find
\[
  \mathrm{Wasserstein}(\mathrm{Law}(X_{(1+\varepsilon)c_n}^n) , P_n^\beta)
  \underset{n\to\infty}{\longrightarrow}0.
\]
This completes the proof of Theorem \ref{th:DOU12}.

\subsection{Symmetric case ($\beta=1$)}

The method is similar to the case $\beta=2$. Let us focus only on the
differences. Let $\mathrm{Sym}_n$ be the set of $n\times n$ real symmetric
matrices, namely the set of $s\in\mathcal{M}_{n,n}(\mathbb{R})$ with
$s_{i,j}=s_{j,i}$ for all $1\leq i,j\leq n$. An element $s\in\mathrm{Sym}_n$
is parametrized by the $n+\frac{n^2-n}{2}=\frac{n(n+1)}{2}$ real variables
$(s_{i,j})_{1\leq i\leq j\leq n}$. We define, for $s\in\mathrm{Sym}_n$ and $1\leq i\leq j\leq n$,
\begin{equation}\label{eq:GOEij}
  \pi_{i,j}(s)=\begin{cases}
    s_{i,i} & \text{if $i=j$}\\
    \sqrt 2\, s_{i,j} & \text{if $i < j$}
  \end{cases}.
\end{equation}
Note that
\[
  \mathrm{Tr}(s^2)=\sum_{i,j=1}^ns_{i,j}^2=\sum_{i=1}^ns_{i,i}^2+2\sum_{i<j}s_{i,j}^2
  = \sum_{1\le i\le j \le n}  \pi_{i,j}(s)^2.
\]
We thus identify isometrically $\mathrm{Sym}_n$, endowed with the norm $\sqrt{\mathrm{Tr}(h^2)}$, with $\mathbb{R}^n\times\mathbb{R}^{\frac{n^2-n}{2}}=\mathbb{R}^{\frac{n(n+1)}{2}}$ endowed with the Euclidean norm.

\medskip

The Gaussian Orthogonal Ensemble $\mathrm{GOE}_n$ is the Gaussian law on
$\mathrm{Sym}_n$ with density 
\begin{equation}\label{eq:GOE}
  s\in\mathrm{Sym}_n
  \mapsto
  \frac{\mathrm{e}^{-\frac{n}{2}\mathrm{Tr}(s^2)}}{C_n}
  \quad\text{where}\quad
  C_n:=\int_{\mathbb{R}^{\frac{n(n+1)}{2}}}\mathrm{e}^{-\frac{n}{2}\mathrm{Tr}(s^2)}
  \prod_{1\leq i\leq j\leq n}\mathrm{d}s_{i,j}.
\end{equation}

If $S$ is a random $n\times n$ real symmetric matrix then
$S\sim\mathrm{GOE}_n$ if and only if the $\frac{n(n+1)}{2}$ real random
variables $\pi_{i,j}(S)$, $1\leq i\leq j\leq n$, are independent Gaussian
random variables with
\begin{equation}
  \pi_{i,j}(S)\sim
  \mathcal{N}\Bigr(0,\frac{1}{n}\Bigr),
  \quad 1\leq i\leq j\leq n.
\end{equation}
The law $\mathrm{GOE}_n$ is the unique invariant law of the real symmetric
matrix OU process ${(S_t)}_{t\geq0}$ on $\mathrm{Sym}_n$
solution of the stochastic differential equation
\begin{equation}\label{Eq:Sym}
  S_0=s_0\in\mathrm{Sym}_n,\quad
  \mathrm{d}S_t=\sqrt{\frac{2}{n}}\mathrm{d}B_t-S_t\mathrm{d}t
\end{equation}
where $B={(B_t)}_{t\geq0}$ is a Brownian motion on $\mathrm{Sym}_n$, in the
sense that the stochastic processes $(\pi_{i,j}(B_t))_{t\geq0}$, $1\le i\le j \le n$, are independent standard one-dimensional BM. The coordinates
stochastic processes ${(\pi_{i,j}(S_t))}_{t\geq0}$, $1\leq i\leq j\leq n$, are
independent real OU processes.

For any $s$ in $\mathrm{Sym}_n$, we denote by $\Lambda(s)$ the vector of the
eigenvalues of $s$ ordered in non-decreasing order. Lemma \ref{le:DOUS}
below is the real symmetric analogue of Lemma \ref{le:DOU}.

\begin{lemma}[From matrix OU to DOU]\label{le:DOUS}
  The image of $\mathrm{GOE}_n$ by the map $\Lambda$ is the Coulomb gas
  $P_n^\beta$ given by \eqref{eq:P} with $\beta=1$. Moreover the stochastic
  process $X^n={(X^n_t)}_{t\geq0}={(\Lambda(S_t))}_{t\geq0}$ is well-defined
  and solves the stochastic differential equation \eqref{eq:DOU} with
  $\beta=1$ and $x_0^n=\Lambda(s_0)$.
\end{lemma}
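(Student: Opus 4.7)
The plan is to mirror the proof of Lemma \ref{le:DOU}, replacing the unitary group by the orthogonal group and the complex Hermitian setting by the real symmetric one; the only substantive change is that the complex Vandermonde exponent $\beta = 2$ becomes the real Vandermonde exponent $\beta = 1$. Two items must be checked: the static identification of the spectral image of $\mathrm{GOE}_n$, and the dynamical computation identifying $\Lambda(S_t)$ with the DOU process.

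\emph{Static part.} I would first invoke the Weyl integration formula for $\mathrm{Sym}_n$: writing $s = o\,\mathrm{diag}(x_1,\ldots,x_n)\,o^\top$ with $o \in O(n)$ (modulo its finite stabilizer) and $x \in \overline{D}_n$, the Jacobian of the change of variables $s \leftrightarrow (o,x)$ produces the real Vandermonde factor $\prod_{i<j}(x_j - x_i)$. Since the $\mathrm{GOE}_n$ density \eqref{eq:GOE} is $O(n)$-invariant and $\mathrm{Tr}(s^2) = |x|^2$, integrating the Haar measure on $O(n)$ out of the joint density leaves a marginal density on $\overline{D}_n$ proportional to $\mathrm{e}^{-\frac{n}{2}|x|^2}\prod_{i<j}(x_j-x_i)$. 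This matches $P_n^1$ by \eqref{eq:P} and \eqref{eq:HBE}.

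\emph{Dynamical part.} On the open stratum of $\mathrm{Sym}_n$ where all eigenvalues are simple, each $\lambda_i$ is a smooth function of the entries with Rayleigh--Schr\"odinger derivatives
$$\nabla\lambda_i \cdot H = v_i^\top H v_i, \qquad \nabla^2\lambda_i(H,H) = 2\sum_{j\neq i}\frac{(v_i^\top H v_j)^2}{\lambda_i - \lambda_j},$$
for any symmetric perturbation $H$, where $v_1,\ldots,v_n$ is an orthonormal eigenbasis of $S$. Applying It\^o's formula to $X^{n,i}_t = \lambda_i(S_t)$ using \eqref{Eq:Sym}, together with the quadratic variations $d\langle S_{kk}\rangle_t = (2/n)\,dt$ and $d\langle S_{kl}\rangle_t = (1/n)\,dt$ for $k<l$ read off from \eqref{eq:GOEij}, I expect three contributions: the $-S_t\,dt$ drift collapses to $-\lambda_i\,dt$ since $v_i^\top S_t v_i = \lambda_i$; the martingale part is a continuous local martingale of bracket $(2/n)\,dt$ by $|v_i|^2 = 1$, so by L\'evy's characterization it is $\sqrt{2/n}\,d\widetilde{B}^i_t$ for an $n$-dimensional standard BM $\widetilde{B}$ (mutual independence follows from $\langle v_i,v_j\rangle = \delta_{ij}$); finally the second-order It\^o correction simplifies, via the same orthonormality identities, to the singular drift $\tfrac{1}{n}\sum_{j\neq i}(\lambda_i - \lambda_j)^{-1}\,dt$. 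This is exactly the $\beta = 1$ instance of \eqref{eq:DOU}.

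\emph{Main obstacle.} The delicate point is the well-definedness claim: the formulas above only hold on the open stratum of simple spectrum, whereas the statement allows $s_0$ to have repeated eigenvalues. For $s_0$ with distinct eigenvalues, the results of \cite{rogers1993interacting} cited in the introduction guarantee that no collisions occur, and the It\^o calculation above is valid for all $t \geq 0$. For boundary $s_0$, I would apply the approximation argument of \cite[Th.~4.3.2]{MR2760897} already mentioned in the paper: approach $s_0$ by matrices with simple spectrum, use tightness of the eigenvalue laws, and pass to the limit. Alternatively, the absence of collisions at any positive time can be deduced directly from the matrix SDE, since the set of symmetric matrices with a multiple eigenvalue has codimension $2$ in $\mathrm{Sym}_n$ and \eqref{Eq:Sym} is driven by non-degenerate Gaussian noise.
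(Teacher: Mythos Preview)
Your proposal is correct and follows the standard route. Note that the paper does not actually supply a proof of Lemma~\ref{le:DOUS}: it simply introduces it as ``the real symmetric analogue of Lemma~\ref{le:DOU}'', and for Lemma~\ref{le:DOU} itself the paper only refers to Dyson's original work and to \cite[Ch.~12]{MR3699468} and \cite[Sec.~4.3]{MR2760897}. Your sketch---Weyl integration for the static part, Hadamard/Rayleigh--Schr\"odinger perturbation plus It\^o's formula for the dynamical part, and the collision/approximation argument of \cite[Th.~4.3.2]{MR2760897} for boundary initial data---is precisely the argument those references carry out, so you have in effect reproduced what the paper outsources.
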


As for the case $\beta=2$, the idea now is that the DOU process is sandwiched
between a real OU process and a matrix OU process.

By similar computations to the case $\beta=2$, the analogue of \eqref{eq:OUKM} becomes
\begin{equation}\label{eq:OUKM1}
  \chi^2(\mathrm{Law}(H_t)\mid \mathrm{GOE}_n)
  =-1 + \frac1{(1-\mathrm{e}^{-4t})^{\frac{(n(n+1))^2}{8}}} \exp\left(n|h_0|^2\frac{\mathrm{e}^{-2t}}{1+\mathrm{e}^{-2t}}\right).
\end{equation}
This allows to deduce the upper bound for TV, Hellinger, Kullback and $\chi^2$. Regarding
the Wasserstein distance, the analogue of \eqref{eq:OUKMW} reads
\begin{equation}\label{eq:OUKMW1}
  \mathrm{Wasserstein}^2(\mathrm{Law}(S_t) , \mathrm{GOE}_n)
  = |x_0^n|^2 \mathrm{e}^{-2t} + 2 - \mathrm{e}^{-2t} - 2\sqrt{1-\mathrm{e}^{-2t}}.
\end{equation}
If $\lim_{n\to\infty}\log(|x_0^n|)=\infty$ then we deduce the asserted result,
concluding the proof of Theorem \ref{th:DOU12}.

\subsection{Proof of Corollary \ref{cor:DOU12}}

Let $\beta \in \{1,2\}$. Recall the definitions of $a_n$ and $c_n$ from the
statement. Take $x_0^{n,i} = a_n$ for all $i$, and note that
$\pi(x_0^n) = n a_n$. Given our assumptions on $a_n$, Corollary
\ref{cor:LowerBd} yields for this particular choice of initial condition and
for any $\varepsilon \in (0,1)$
\[
  \lim_{n\to\infty}\mathrm{dist}(\mathrm{Law}(X^n_{(1-\varepsilon)c_n})\mid P_n^\beta)
  =\max.
\]
On the other hand, in the proof of Theorem \ref{th:DOU12} we saw that
\[
  \chi^2(\mathrm{Law}(X_t^n) \mid P_n^\beta) \le -1 + \frac1{(1-\mathrm{e}^{-4t})^{b_n/2}} \exp\left(n|x_0^n|^2\frac{\mathrm{e}^{-2t}}{1+\mathrm{e}^{-2t}}\right),
\]
where $b_n = n^2$ for $\beta =2$ and $b_n = (n(n+1)/2)^2$ for $\beta =1$.
Since $|x_0^n| \le \sqrt{n} a_n$ for all $x_0^n \in [-a_n,a_n]^n$, and given
the comparison between TV, Hellinger, Kullback and $\chi^2$ stated in Lemma
\ref{le:distineqs} we obtain for
$\mathrm{dist} \in \{\mathrm{TV}, \mathrm{Hellinger}, \mathrm{Kullback}, \chi^2\}$ and for all $\varepsilon \in (0,1)$
\[
  \lim_{n\to\infty}\sup_{x_0^n \in [-a_n,a_n]^n}\mathrm{dist}(\mathrm{Law}(X^n_{(1+\varepsilon)c_n})\mid P_n^\beta)
  =0,
\]
thus concluding the proof of Corollary \ref{cor:DOU12} regarding theses distances.\\
Concerning Wasserstein, the proof of Theorem \ref{th:DOU12} shows that for any
$x_0^n \in [-a_n,a_n]^n$ we have
\begin{align*}
  \mathrm{Wasserstein}^2(\mathrm{Law}(X_t^n) , P_n^\beta)
  &\le |x_0^n|^2 \mathrm{e}^{-2t} + 2 - \mathrm{e}^{-2t} - 2\sqrt{1-\mathrm{e}^{-2t}}\\
  &\le n a_n^2 \mathrm{e}^{-2t} + 2 - \mathrm{e}^{-2t} - 2\sqrt{1-\mathrm{e}^{-2t}}.
\end{align*}
If $\sqrt{n} a_n \to \infty$, then for $c_n = \log(\sqrt{n}a_n)$ we deduce
that for all $\varepsilon \in (0,1)$
\[
  \lim_{n\to\infty}\sup_{x_0^n \in [-a_n,a_n]^n}\mathrm{dist}(\mathrm{Law}(X^n_{(1+\varepsilon)c_n})\mid P_n^\beta)
  =0.
\]
    
\section{Cutoff phenomenon for the DOU in TV and Hellinger}
\label{se:DOUWTV:ub}

In this section, we prove Theorem \ref{th:DOUWTV} and Corollary
\ref{cor:DOUWTV} for the TV and Hellinger distances. We only consider the case
$\beta \ge 1$, although the arguments could be adapted \emph{mutatis mutandis}
to cover the case $\beta = 0$: note that the result of Theorem \ref{th:DOUWTV}
and Corollary \ref{cor:DOUWTV} for $\beta = 0$ can be deduced from Theorem
\ref{th:OU2}. At the end of this section, we also provide the proof of Theorem
\ref{th:DOUK}.

\subsection{Proof of Theorem \ref{th:DOUWTV} in TV and Hellinger}

By the comparison between TV and Hellinger stated in Lemma \ref{le:distineqs},
it suffices to prove the result for the TV distance, so we concentrate on this
distance until the end of this section. Our proof is based on the exponential
decay of the relative entropy at an explicit rate given by the optimal
logarithmic Sobolev constant. However, this requires the relative entropy of
the initial condition to be \emph{finite}. Consequently, we proceed in three
steps. First, given an arbitrary initial condition $x_0^n \in \overline{D}_n$,
we build an absolutely continuous probability measure $\mu_{x_0^n}$ on
$D_n$ that approximates $\delta_{x_0^n}$ and whose relative entropy is not too
large. Second, we derive a decay estimate starting from this regularized
initial condition. Third, we control the
total variation distance between the two processes starting respectively from
$\delta_{x_0^n}$ and $\mu_{x_0^n}$.

\subsubsection{Regularization}

In order to have a finite relative entropy at time $0$, we first regularize
the initial condition by smearing out each particle in a ball of radius
bounded below by $n^{-(\kappa+1)}$, for some $\kappa>0$. Let us first
introduce the regularization at scale $\eta$ of a Dirac distribution
$\delta_{z}$, $z\in \mathbb{R}$ by
\[
  \delta_z^{(\eta)}(\mathrm{d}u)=\mathrm{Uniform}([z,z+\eta])(\mathrm{d}u)=\eta^{-1} \mathbf{1}_{[z,z+\eta]} \mathrm{d}u.
\]
Given $x \in \overline{D}_n$ and $\kappa > 0$, we define a regularized version
of $\delta_{x}$ at scale $n^{-\kappa}$, that we denote
$\mu_x$, by setting
\begin{equation}\label{eq:muin}
	\mu_x=\otimes_{i=1}^n \delta_{x_i+3i\eta}^{(\eta)},
\end{equation}
where $\eta := n^{-(\kappa +1 )}$. The parameters have been tuned in such a
way that, independently of the choice of $x\in \overline{D}_n$, the following
properties hold. The supports of the Dirac masses
$\delta_{x_i+3i\eta}^{(\eta)}$, $i\in \{1,\ldots,n\}$, lie at distance at
least $\eta$ from each other. The volume of the support of $\mu_x$ is equal to
$\eta^n$, and therefore the relative entropy of $\mu_x$ with respect to the
Lebesgue measure is not too large. Finally, provided $X_0^n = x$ and $Y_0^n$
is distributed according to $\mu_x$, almost surely
$\vert X_0^n - Y_0^n\vert_\infty \le (3n+1) \eta$.

\subsubsection{Convergence of the regularized process to equilibrium}

\begin{lemma}[Convergence of regularized process]\label{le:convergence regul}
  Let $(Y_t^n)_{t\geq0}$ be a DOU process solution of
  \eqref{eq:DOU}, $\beta\geq 1$, and let $P_n^\beta$ be its invariant law. Assume
  that $\mathrm{Law}(Y^n_0)$ is the
  regularized measure $\mu_{x_0^n}$ in \eqref{eq:muin} associated to some initial condition $x_0^n \in \overline{D}_n$. Then there exists a constant $C > 0$, only depending on $\kappa$, such that for all $t\ge 0$, all $n\ge 2$ and all $x_0^n \in \overline{D}_n$
  \begin{equation*}
    \mathrm{Kullback}(\mathrm{Law}(Y_t^n)\mid P_n^\beta)
    \leq C (n|x_0^n|^2 + n^2\log(n))\mathrm{e}^{-2t}.
  \end{equation*}
\end{lemma}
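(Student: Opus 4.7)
\medskip

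\noindent\textbf{Proof plan for Lemma \ref{le:convergence regul}.} The plan is to combine the optimal exponential decay of the entropy (Lemma \ref{le:expdec}, which is equivalent to the optimal logarithmic Sobolev inequality of Lemma \ref{le:lsi} and which, together with the identification of the spectral gap as $1$, yields $\mathrm{Kullback}(\mathrm{Law}(Y_t^n)\mid P_n^\beta)\le \mathrm{e}^{-2t}\mathrm{Kullback}(\mathrm{Law}(Y_0^n)\mid P_n^\beta)$) with an \emph{a priori} bound on the initial entropy $\mathrm{Kullback}(\mu_{x_0^n}\mid P_n^\beta)$ of the regularized measure. Writing the density of $P_n^\beta$ as $\mathrm{e}^{-E}/C_n^\beta$, we have the decomposition
\[
\mathrm{Kullback}(\mu_{x_0^n}\mid P_n^\beta)
= S(\mu_{x_0^n})+\log C_n^\beta+\int E\,\mathrm{d}\mu_{x_0^n},
\]
and the task reduces to showing that each of the three terms on the right-hand side is at most a constant times $n|x_0^n|^2+n^2\log n$.

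\medskip

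The Boltzmann term is immediate: since $\mu_{x_0^n}$ is a product of uniform laws on intervals of length $\eta=n^{-(\kappa+1)}$, one has $S(\mu_{x_0^n})=-n\log\eta=(\kappa+1)n\log n$. The partition function $C_n^\beta$ is the $\beta$-Hermite Selberg--Mehta integral, so its logarithm is explicit in terms of gamma functions and an application of Stirling shows $|\log C_n^\beta|=O(n^2\log n)$. For $\int E\,\mathrm{d}\mu_{x_0^n}$ one splits the energy as in \eqref{eq:E}. Writing $Y=(x_i+3i\eta+U_i)_{1\le i\le n}$ with $U_i$ i.i.d.\ uniform on $[0,\eta]$, the confinement term $n\sum_iV(Y_i)$ expands as $\frac{n}{2}|x_0^n|^2+O(n\cdot n\eta\cdot|x_0^n|)+O(n^3\eta^2)$, which is controlled by $n|x_0^n|^2+1$ since $n\eta=n^{-\kappa}\to 0$. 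For the interaction term, the key geometric observation is that, by construction of $\mu_{x_0^n}$, for every $i>j$ one has almost surely $Y_i-Y_j\ge (3(i-j)-1)\eta\ge 2(i-j)\eta$. Therefore
\[
\int\sum_{i>j}\log\frac{1}{Y_i-Y_j}\,\mathrm{d}\mu_{x_0^n}
\le \sum_{i>j}\log\frac{1}{2(i-j)\eta}
\le \binom{n}{2}\bigr(\log(1/\eta)+\log(1/2)\bigr)
= O(n^2\log n).
\]

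\medskip

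Collecting these three estimates yields $\mathrm{Kullback}(\mu_{x_0^n}\mid P_n^\beta)\le C(n|x_0^n|^2+n^2\log n)$ for a constant $C$ depending only on $\kappa$ and $\beta$, and then the exponential decay of the entropy gives the claim. The step that requires most care is the interaction term: the Dirac mass $\delta_{x_0^n}$ has infinite entropy precisely because particles may coincide at $t=0$, and the whole purpose of the regularization at scale $\eta=n^{-(\kappa+1)}$ is to enforce a uniform minimal spacing of the atoms of $\mu_{x_0^n}$ while keeping the support small enough that $\log(1/\eta)$ stays of order $\log n$. The tuning of the shifts $3i\eta$ is precisely what ensures that, independently of the initial ordering of the $x_{0,i}^n$, the $i$-th and $j$-th copies of the regularizing interval remain separated by at least $2(i-j)\eta$, producing the harmonic-sum bound above; any less careful smearing would either leave some pairs of intervals too close or require a much smaller $\eta$, breaking the $n^2\log n$ estimate. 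The bound on $\log C_n^\beta$ is the other nontrivial input but is standard from the Selberg--Mehta formula for H$\beta$E.
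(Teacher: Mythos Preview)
Your proof is correct and follows essentially the same approach as the paper: apply the exponential entropy decay of Lemma~\ref{le:expdec}, decompose $\mathrm{Kullback}(\mu_{x_0^n}\mid P_n^\beta)=S(\mu_{x_0^n})+\int E\,\mathrm{d}\mu_{x_0^n}+\log C_n^\beta$, and bound each piece separately using the product structure of $\mu_{x_0^n}$, the enforced spacing of the supports, and the Selberg--Mehta asymptotics. The only differences are cosmetic: the paper uses the cruder spacing $Y_i-Y_j\ge\eta$ rather than your $2(i-j)\eta$, and quotes $\log C_n^\beta\le Cn^2$ rather than $O(n^2\log n)$; also note that your confinement bookkeeping is slightly off (the quadratic remainder is $O(n^4\eta^2)=O(n^{2-2\kappa})$ rather than $O(n^3\eta^2)$, and the cross term picks up an extra $\sqrt n$ via Cauchy--Schwarz), but this is harmless since these terms are absorbed by $n^2\log n$ anyway.
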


\begin{proof}[Proof of Lemma \ref{le:convergence regul}]
  By Lemma \ref{le:expdec} and since
  $\mathrm{Law}(Y^n_0) = \mu_{x_0^n}$, for all $t\geq 0$, there holds
  \begin{equation}\label{eq:decay kull}
    \mathrm{Kullback}(\mathrm{Law}(Y_t^n)\mid P_n^\beta)
    \leq  \mathrm{Kullback}(\mu_{x_0^n}\mid P_n^\beta)\mathrm{e}^{-2t}.
  \end{equation}
  Now we have
  \begin{equation*}
    \mathrm{Kullback}(\mu_{x_0^n}\mid P_n^\beta) %
    = \mathbb{E}_{\mu_{x_0^n} }\left[\log
      \frac{\mathrm{d}\mu_{x_0^n} }{\mathrm{d} P_n^\beta} \right].  
  \end{equation*}
  Recall the definition of $S$ in \eqref{eq:S}. As $P_n^\beta$ has density
  $\frac{\mathrm{e}^{- E}}{C_n^\beta}$, we may re-write this as
  \begin{equation}\label{eq:splitting kull}
    \mathrm{Kullback}(\mu_{x_0^n}\mid P_n^\beta)=
    S(\mu_{x_0^n})
    +\mathbb{E}_{\mu_{x_0^n}}[E]
    +\log C_n^\beta.
  \end{equation}
  Recall the partition function $C_{*n}^\beta = n! C_n^\beta$ from Subsection
  \ref{Subsec:Analysis}. It is proved in \cite{MR1465640}, using explicit
  expressions involving Gamma functions via a Selberg integral, that for some
  constant $C>0$
  \begin{equation}\label{eq:logZn}
    \log C_n^\beta \le \log C_{*n}^\beta\leq Cn^2.
  \end{equation}
  
  Next, we claim that
  $S(\mu_{x_0^n})\leq n
  \log(n^{1+\kappa})$. Indeed since $\mu_{x_0^n}$ is a product
  measure, the tensorization property of entropy recalled in Lemma
  \ref{le:tenso} gives
  \begin{equation*}
    \mathrm{Kullback}(\mu_{x_0^n} \mid \mathrm{d}x ) %
    = \sum_{i=1}^n \mathrm{Kullback}( \delta_{0}^{(\eta)} \mid \mathrm{d}x).  
  \end{equation*}
  Moreover an immediate computation yields
  $\mathrm{Kullback}( \delta_{0}^{(\eta)} \mid \mathrm{d}x)=\log(\eta^{-1})$ so that, given the definition of $\eta$,  we get
  \begin{equation}\label{eq:in entropy}
    \mathrm{Kullback}(\mu_{x_0^n} \mid \mathrm{d}x) = n \log(n^{\kappa+1}).
  \end{equation}
  We turn to the estimation of the term
  $\mathbb{E}_{\mu_{x_0^n} }[E].$ The confinement term can be easily bounded:
  \begin{equation*}
    \mathbb{E}_{\mu_{x_0^n} }\left[\frac{n}2 \sum_{i=1}^n {x_i^2} \right]\leq  (n|x_0^n|^2 + n^2 \eta^2).
  \end{equation*}
  Let us now estimate the logarithmic energy of $\mu_{x_0^n}$. Using the fact
  that the logarithmic function is increasing, together with the fact the
  supports of $\delta_{x_i+3i\eta}^{(\eta)}$ lie at distance at least $\eta$
  from each other, we notice that for any $i > j$ there holds
  \begin{align*}
    \mathbb{E}_{\mu_{x_0^n} }\left[\log |x_i-x_j| \right]
    &= \iint \log |x-y|\delta_{x_i+3i\eta}^{(\eta)}(\mathrm{d}x)\delta_{x_j+3j\eta}^{(\eta)}(\mathrm{d}y)\\
    &\ge \iint \log |x-y|\delta_{3\eta}^{(\eta)}(\mathrm{d}x)\delta_{0}^{(\eta)}(\mathrm{d}y)\\
    &\ge \log \eta.
  \end{align*}
  It follows that the initial logarithmic energy cannot be much larger than
  $n^2\log n$:
  \begin{equation*}
    \mathbb{E}_{\mu_{x_0^n} }\left[\sum_{i > j}\log \frac1{|x_i-x_j|} \right] %
    \leq \frac{n(n-1)}{2} \log n^{\kappa +1}.
  \end{equation*}
  This implies that there exists a constant $C>0$, only depending on $\kappa$,
  such that for all $n\ge 2$
  \begin{equation}\label{eq:in energy}
    \mathbb{E}_{\mu_{x_0^n}  }[E] %
    = \mathbb{E}_{\mu_{x_0^n}  }\left[ \frac{n}2 \sum_{i=1}^n |x_i|^2+{\beta}\sum_{i > j}\log\frac{1}{|x_i-x_j|}\right]\leq C \big(n|x_0^n|^2 + n^2\log n\big).
  \end{equation}
  Inserting \eqref{eq:logZn}, \eqref{eq:in entropy} and \eqref{eq:in
    energy} into \eqref{eq:splitting kull} we obtain (for a different constant $C>0$)
  \begin{equation*}
    \mathrm{Kullback}(\mu_{x_0^n} \mid P_n^\beta)\leq C \big(n|x_0^n|^2 + n^2\log n\big).
  \end{equation*}
  This bound, combined with \eqref{eq:decay kull}, concludes the proof of Lemma \ref{le:convergence regul}.
  \end{proof}

\subsubsection{Convergence to the regularized process in total variation distance}

Let $(X_t^n)_{t\geq 0}$ and $(Y_t^n)_{t\geq 0}$ be two DOU processes with
$X_0^n=x_0^n$ and $\mathrm{Law}(Y_0^n)=\mu_{x_0^n}$, where the measure
$\mu_{x_0^n}$ is defined in \eqref{eq:muin}. Below we prove that, as soon as
the parameter $\kappa$ is large enough, the total variation distance between
$\mathrm{Law}(X_t^n)$ and $\mathrm{Law}(Y_t^n)$ tends to $0$, for any fixed
$t> 0$.

Note that at time $0$, almost surely, there holds $X_0^{n,i} \le Y_0^{n,i}$,
for every $i\in\{1,\ldots,n\}$. We now introduce a coupling of the processes
$(X_t^n)_{t\geq 0}$ and $(Y_t^n)_{t\geq 0}$ that preserves this ordering at
all times. Consider two independent standard BM $B^n$ and $W^n$ in
$\mathbb{R}^n$. Let $X^n$ be the solution of \eqref{eq:DOU} driven by $B^n$,
and let $Y^n$ be the solution of
\[
  \mathrm{d}Y_t^{n,i} =\sqrt{\frac{2}{n}} \Big( \mathbf{1}_{\{Y_t^{n,i} \ne
    X_t^{n,i} \}} \mathrm{d}W^i_t + \mathbf{1}_{\{Y_t^{n,i} = X_t^{n,i} \}}
  \mathrm{d}B^i_t \Big) -Y_t^{n,i}\mathrm{d}t +\frac{\beta}{n}\sum_{j\neq
    i}\frac{\mathrm{d}t}{Y_t^{n,i}-Y_t^{n,j}},\quad 1\leq i\leq n.
\]
We denote by $\mathbb{P}$ the probability measure under which these two
processes are coupled. Let us comment on the driving noise in the equation
satisfied by $Y^n$. When the $i$-th coordinates of $X^n$ and $Y^n$ equal, we
take the same driving Brownian motion and the difference $Y^{n,i}-X^{n,i}$
remains non-negative due to the convexity of $-\log$ defined in \eqref{eq:E},
see the monotoncity result stated in Lemma \ref{lemma:monotonicity}. On the
other hand, when these two coordinates differ, we take independent driving
Brownian motions in order for their difference to have non-zero quadratic
variation (this allows to increase their merging probability). Under this
coupling, the ordering of $X^n$ and $Y^n$ is thus preserved at all times, and
if $X_s^n = Y_s^n$ for some $s\ge 0$, then it remains true at all times
$t\ge s$. Note however that if $X_s^{n,i} = Y_s^{n,i}$, then this equality
does not remain true at all times except if all the coordinates match.

As in \eqref{eq:TVW}, the total variation distance between the laws of $X_t^n$
and $Y_t^n$ may be bounded by
\[
  \Vert \mathrm{Law}(Y_t^n) -\mathrm{Law}(X_t^n) \Vert_{\mathrm{TV}}\leq \mathbb{P}( X_t^n \neq Y_t^n),
\]
for all $t\geq 0$.
We wish to establish that for any given $t>0$,
\begin{equation*}
    \lim_{n\to \infty}\mathbb{P}(X_t^n\neq Y_t^n)=0.
\end{equation*}
To do so, we work with the \textit{area} between the two processes $X^n$ and $Y^n$, defined by
\[
  A^n_t := \sum_{i=1}^n \big( Y^{n,i}_t - X^{n,i}_t \big) %
  = \pi(Y^n_t) - \pi(X^n_t),\quad t\ge 0.
\]
As the two processes are ordered at any time, this is nothing but the
geometric area between the two discrete interfaces $i\mapsto X_t^{n,i}$ and
$i\mapsto Y_t^{n,i}$ associated to the configurations $X_t^n$ and $Y_t^n$. We
deduce that the merging time of the two processes coincide with the hitting
time of $0$ by this area, that we denote by $\tau=\inf \{t\geq 0:A_t^n=0\}$.

The process $A^n$ has a very simple structure: it is a semimartingale that
behaves like an OU process with a randomly varying quadratic variation. Let
$N_t$ be the number of coordinates that do not coincide at time $t$, that is
\[
  N_t := \#\big\{i\in\{1,\ldots,n\}: X^{n,i}_t \ne Y^{n,i}_t\big\}.
\]
Then $A^n$ satisfies 
\begin{equation*}
  \mathrm{d}A^n_t = -A^n_t \mathrm{d}t + \mathrm{d}M_t,
\end{equation*}
where $M$ is a centered martingale with quadratic variation
\begin{equation}\label{eq:quadra varia M}
  \mathrm{d}\langle M\rangle_t = \frac2{n} N_t \mathrm{d}t.
 \end{equation}
Note that whenever $t< \tau$ we have
\[
  \mathrm{d}\langle M\rangle_t \ge \frac2{n}.
\]
This \emph{a priori} lower bound on the quadratic variation of $M$, combined
with the Dubins--Schwarz theorem, allows to check that $\tau < \infty$ almost
surely. Note that in view of the coupling between $X_t^n$ and $Y_t^n$, we have
$X_t^n=Y_t^n$ for all $t\geq \tau$.\\

Recall the following informal fact: with large probability, a Brownian motion starting from
$a$ hits $b$ by a time of order $(a-b)^2$. For a continuous martingale, this
becomes: with large probability, a continuous martingale starting from $a$
accumulates a quadratic variation of order $(a-b)^2$ up to its first hitting
time of $b$. Our next lemma states such a bound on the supermartingale $A^n$.

\begin{lemma}\label{Lemma:supermgale}
 Let $a>b\ge 0$. Let $\tau_b=\inf\{t>0:A_t=b\}<\infty$ almost surely. Then, for all $u\ge 1$,
  \[
    \mathbb{P}(\langle A\rangle_{\tau_b} \ge (a-b)^2 u \mid A_0 = a) \le 4 u^{-1/2}.
  \]
\end{lemma}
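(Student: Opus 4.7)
The plan is to reduce the bound to a hitting time estimate for a standard Brownian motion via Dubins--Schwarz, using the key fact that although $A$ is not a martingale, its drift is non-positive thanks to the coupling. Concretely, by construction $Y^{n,i}_t \ge X^{n,i}_t$ for every $i$ and every $t\ge 0$, so $A_t = \pi(Y^n_t)-\pi(X^n_t)$ is non-negative and the drift $-A_t\,\mathrm{d}t$ in $\mathrm{d}A_t = -A_t\,\mathrm{d}t + \mathrm{d}M_t$ is non-positive. In particular $\langle A\rangle = \langle M\rangle$, so it suffices to control $\langle M\rangle_{\tau_b}$.

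The crucial observation I would then establish is that $M_t \ge -(a-b)$ for every $t\in[0,\tau_b]$. Indeed, by minimality of $\tau_b$ and continuity, $A_t \ge b$ on this interval, and rearranging the decomposition gives $M_t = (A_t - a) + \int_0^t A_s\,\mathrm{d}s$, whose second summand is non-negative; hence $M_t \ge b - a$. In words, the supermartingale structure of $A$ forces the driving martingale $M$ to stay above $-(a-b)$ up to the hitting time $\tau_b$.

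Next I would apply Dubins--Schwarz to write $M_t = W_{\langle M\rangle_t}$ for some standard Brownian motion $W$ (on an enlarged probability space if $\langle M\rangle_\infty$ is finite). Since $A_t > 0$ for $t<\tau_b$ forces $N_t\ge 1$, the map $t\mapsto\langle M\rangle_t$ is strictly increasing on $[0,\tau_b]$, and the previous lower bound translates to $W_s \ge -(a-b)$ for every $s\in[0,\langle M\rangle_{\tau_b}]$; equivalently $\langle M\rangle_{\tau_b}\le \tau^W$, where $\tau^W := \inf\{s\ge 0 : W_s = -(a-b)\}$. By Brownian scaling the law of $\tau^W$ is that of $(a-b)^2 T$ with $T$ the first hitting time of $-1$ by a standard Brownian motion, and the reflection principle gives $\mathbb{P}(T \ge u) = 2\Phi(1/\sqrt{u}) - 1 \le \sqrt{2/(\pi u)}$ for $u\ge 1$, which is comfortably below $4u^{-1/2}$. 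I do not anticipate a genuine obstacle: morally $A$ is dominated by a pure Brownian motion with the same quadratic variation, and the only point requiring care is the Dubins--Schwarz time-change, specifically checking that the lower bound on $M$ transfers to $W$ on the entire interval $[0,\langle M\rangle_{\tau_b}]$, which is where the strict monotonicity of $\langle M\rangle$ on $[0,\tau_b]$ is used.
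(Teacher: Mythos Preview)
Your proof is correct and takes a genuinely different route from the paper. The paper proceeds via the exponential submartingale
\[
  S_t=\exp\Bigl(-\lambda A_t-\tfrac{\lambda^2}{2}\langle A\rangle_t\Bigr),
\]
applies Doob's optional stopping at $\tau_b$ to obtain the Laplace-type bound $\mathbb{E}\bigl[e^{-\frac{\lambda^2}{2}\langle A\rangle_{\tau_b}}\bigr]\ge e^{-\lambda(a-b)}$, and then extracts the tail estimate by a Chernoff argument with the choice $\lambda=2(a-b)^{-1}u^{-1/2}$. You instead observe directly that $M_t=(A_t-a)+\int_0^t A_s\,\mathrm{d}s\ge -(a-b)$ on $[0,\tau_b]$, time-change via Dubins--Schwarz, and bound $\langle M\rangle_{\tau_b}$ by the hitting time of $-(a-b)$ for a standard Brownian motion, which the reflection principle handles explicitly. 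Your argument yields the sharper constant $\sqrt{2/\pi}$ in place of $4$ and is perhaps more transparent geometrically; the paper's exponential-martingale route is more self-contained in that it avoids the Dubins--Schwarz machinery (enlargement of the probability space, and your careful check that strict monotonicity of $\langle M\rangle$ on $[0,\tau_b]$ transfers the lower bound on $M$ to $W$ on the full interval). Both proofs ultimately rest on the same structural input, namely the non-negativity of $A$, which makes the drift non-positive and forces $M$ to stay above $-(a-b)$ up to $\tau_b$.
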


\begin{proof}
  Without loss of generality one can assume that $A_0 = a$ almost surely.\\
  By Itô's formula, for all $\lambda \ge 0$, the process
  \[
    S_t = \exp\Bigr(-\lambda A_t - \frac{\lambda^2}{2} \langle A\rangle_t\Bigr),
  \]
  defines a submartingale (taking its values in $[0,1]$). Doob's stopping
  theorem yields
  \[
    \mathbb{E}[\mathrm{e}^{-\frac{\lambda^2}{2} \langle A\rangle_{\tau_b}}]
    =\mathrm{e}^{\lambda b} \mathbb{E}[S_{\tau_b}] \ge \mathrm{e}^{\lambda b}
    \mathbb{E}[S_0] = \mathrm{e}^{-\lambda(a-b)}.
  \]
  On the other hand, for $\lambda = 2 (a-b)^{-1} u^{-1/2}$, there holds
  \begin{align*}
    \mathbb{E}[\mathrm{e}^{-\frac{\lambda^2}{2} \langle A\rangle_{\tau_b}}]
    &\le \mathbb{P}\big(\langle
    A\rangle_{\tau_b} < (a-b)^2 u\big) + \mathrm{e}^{-\frac{\lambda^2}{2}(a-b)^2u}\, \mathbb{P}\big(\langle
    A\rangle_{\tau_b} \ge (a-b)^2 u\big)\\
    &\le 1 - (1-\mathrm{e}^{-\frac{\lambda^2}{2}(a-b)^2u})\mathbb{P}\big(\langle
    A\rangle_{\tau_b} \ge (a-b)^2 u\big)\\
    &\le 1 - \frac12 \mathbb{P}\big(\langle
    A\rangle_{\tau_b} \ge (a-b)^2 u\big).
  \end{align*}
  Consequently one deduces that
  \[
    \mathbb{P}(\langle A\rangle_{\tau_b} \ge (a-b)^2 u) \le
    2(1-\mathrm{e}^{-\lambda(a-b)}) \le 4u^{-1/2}.
  \]
\end{proof}

We are now ready to prove the following lemma:
\begin{lemma}\label{le:distance X Y}
  If $\kappa>\frac{3}{2}$, then for every sequence of times ${(t_n)}_n$ with
  $\varliminf_{n\to \infty}t_n>0$, we have
  \[
    \lim_{n\to \infty}%
    \sup_{x_0^n \in \overline{D}_n}%
    \Vert \mathrm{Law}(Y_{t_n}^n) -\mathrm{Law}(X_{t_n}^n) \Vert_{\mathrm{TV}}=0.
  \]
\end{lemma}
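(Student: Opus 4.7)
The plan is to combine the coupling construction preceding the statement with the quadratic-variation bound of Lemma \ref{Lemma:supermgale}. Since under this coupling $X_t^n = Y_t^n$ if and only if the merging time $\tau = \inf\{t \geq 0 : A^n_t = 0\}$ satisfies $\tau \leq t$, the standard coupling inequality already recorded in the excerpt gives
\[
  \|\mathrm{Law}(Y_{t_n}^n) - \mathrm{Law}(X_{t_n}^n)\|_{\mathrm{TV}} \leq \mathbb{P}(\tau > t_n),
\]
so it is enough to show that the right-hand side vanishes uniformly in $x_0^n \in \overline{D}_n$. The key deterministic input coming from the regularization is the bound $A_0^n \leq 2 n^2 \eta = 2 n^{1-\kappa}$, obtained by summing the pathwise estimate $Y_0^{n,i} - X_0^{n,i} \leq (3i+1)\eta$ over $i$; crucially, this bound does not depend on $x_0^n$.

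Next, I would exploit a pathwise lower bound on the quadratic variation of $A^n$. Since $A^n > 0$ on $[0,\tau)$ forces $N_t \geq 1$ there, equation \eqref{eq:quadra varia M} yields $\langle A\rangle_\tau = \langle M\rangle_\tau \geq 2\tau/n$, hence the inclusion $\{\tau > t_n\} \subseteq \{\langle A\rangle_\tau > 2 t_n/n\}$. Conditioning on $A_0^n = a$ and applying Lemma \ref{Lemma:supermgale} with $b=0$ and $u = 2t_n/(n a^2)$, one obtains
\[
  \mathbb{P}(\tau > t_n \mid A_0^n = a) \leq \mathbb{P}\bigl(\langle A\rangle_\tau \geq a^2 u \,\bigm|\, A_0^n = a\bigr) \leq 4 \sqrt{\frac{n a^2}{2 t_n}} \leq 4 \sqrt{\frac{2 n^{3-2\kappa}}{t_n}},
\]
where the last step uses $a \leq 2n^{1-\kappa}$. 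For this invocation of Lemma \ref{Lemma:supermgale} to be legitimate we need $u \geq 1$ uniformly over $a \leq 2n^{1-\kappa}$, i.e.\ $4 n^{2-2\kappa} \leq 2 t_n/n$, which holds for $n$ large precisely because $\kappa > 3/2$ and $\varliminf_n t_n > 0$. Taking the supremum in $x_0^n$ then forces the displayed bound to tend to zero.

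The main conceptual step is translating a hitting-time problem for the supermartingale $A^n$ into a quadratic-variation estimate, which is exactly what Lemma \ref{Lemma:supermgale} delivers via the exponential supermartingale $\exp(-\lambda A_t - \tfrac12 \lambda^2 \langle A\rangle_t)$. The threshold $\kappa > 3/2$ then arises from balancing the two competing scales in the problem, namely the initial area of order $n^{1-\kappa}$ and the minimal driving-noise amplitude $\sqrt{2/n}$ carried by a single unmatched pair; the strict inequality (rather than $\kappa \geq 3/2$) is what ensures $u \geq 1$ in the application of Lemma \ref{Lemma:supermgale} for $n$ large. Without the pathwise lower bound $N_t \geq 1$ on $[0,\tau)$, the dissipative drift $-A^n_t$ alone would not be strong enough to rule out anomalously slow merging, and this is the only subtle point in the argument.
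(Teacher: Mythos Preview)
Your argument is correct and follows essentially the same approach as the paper: bound the initial area $A_0^n$ by a constant times $n^{1-\kappa}$, use the pathwise lower bound $\langle A\rangle_\tau \geq 2\tau/n$ coming from $N_t \geq 1$ on $[0,\tau)$, and then invoke Lemma~\ref{Lemma:supermgale} with $b=0$. The only cosmetic difference is that the paper takes $u=\log n$ (yielding $\tau \leq 8 n^{3-2\kappa}\log n$ with probability $1-4/\sqrt{\log n}$), whereas you plug in $u=2t_n/(na^2)$ directly; both choices are valid and lead to the same conclusion under $\kappa>3/2$.
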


\begin{proof}[Proof of Lemma \ref{le:distance X Y}]
  Let ${(t_n)}_n$ be a sequence of times such that
  $\varliminf_{n\to \infty}t_n>0$. In view of the definition of
  $\mu_{{x_0^n}}$ and $\eta$, the initial area satisfies almost surely
  \[
    A_0^n\leq 4n^{1-\kappa}.
  \]
  According to Lemma \ref{Lemma:supermgale}, with a probability that goes to $1$, one has
  \[
    \langle A^n\rangle_{\tau}-\langle A^n\rangle_{0} < 16n^{2-2\kappa} \log
    n.
  \]
  On the other hand, by \eqref{eq:quadra varia M}, we have the following control on the
  quadratic variation:
  \[
    \langle A\rangle_{\tau}-\langle A\rangle_{0} \ge
    \frac2{n}\tau.
  \]
  One deduces that, with a probability that goes to $1$,
  \[
    \tau \le \frac{16}2 n^{3-2\kappa} \log n,
  \]
  and this quantity goes to $0$ as $n\to\infty$, whenever
  $\kappa>\frac{3}{2}$. Therefore for $\kappa>\frac{3}{2}$, there holds
  \begin{equation*}
    \lim_{n\to \infty} \sup_{x_0^n \in \overline{D}_n} \mathbb{P}(X_{t_n}^n\neq Y_{t_n}^n)=0,
  \end{equation*}
  thus concluding the proof of Lemma \ref{le:distance X Y}.
\end{proof}

\begin{proof}[Proof of Theorem \ref{th:DOUWTV} in TV and Hellinger]
  Let $\kappa>\frac{3}{2}$ and fix some initial condition
  $x_0^n \in \overline{D}_n$. By the triangle inequality for $\mathrm{TV}$,
  there holds
  \begin{equation}\label{eq:triangular ine}
    \Vert\mathrm{Law}(X_{t}^n)-P_n^\beta\Vert_{\mathrm{TV}}\leq \Vert\mathrm{Law}(Y_{t}^n)-P_n^\beta\Vert_{\mathrm{TV}}+ \Vert\mathrm{Law}(X_{t}^n)-\mathrm{Law}(Y_{t}^n)\Vert_{\mathrm{TV}}.
  \end{equation}
  Taking $t=t_n(1+\varepsilon)$ with
  $t_n=\log(\sqrt{n} |x_0^n|) \vee \log(n)$, one deduces from Lemma
  \ref{le:convergence regul} and the Pinsker inequality stated in Lemma
  \ref{le:distineqs} that the first term in the right-hand side of
  \eqref{eq:triangular ine} vanishes as $n$ tends to infinity. Meanwhile Lemma
  \ref{le:distance X Y} guaranties that the second term tends to $0$ as $n$
  tends to infinity. We also conclude using the comparison between TV and
  Hellinger (see Lemma \ref{le:distineqs}) that
  \[
    \lim_{n\to \infty}\mathrm{Hellinger}(\mathrm{Law}(X_{t_n}^n),P_n^\beta)=0.
  \]
\end{proof}

\subsection{Proof of Corollary \ref{cor:DOUWTV} in TV and Hellinger}

\begin{proof}[Proof of Corollary \ref{cor:DOUWTV} in TV and Hellinger]
  By Lemma \ref{le:distineqs} and the triangle inequality for $\mathrm{TV}$,
  we have
  \begin{align*}
    \sup_{x_0^n \in [-a_n,a_n]^n}\Vert\mathrm{Law}(X_{t}^n)-P_n^\beta\Vert_{\mathrm{TV}}
    &\le \sup_{x_0^n \in [-a_n,a_n]^n}\Vert\mathrm{Law}(Y_{t}^n)-\mathrm{Law}(X_{t}^n)\Vert_{\mathrm{TV}}\\
    &\quad+ \sup_{x_0^n \in [-a_n,a_n]^n} \sqrt{2\, \mathrm{Kullback}(\mathrm{Law}(Y_t^n)\mid P_n^\beta)}.
  \end{align*}
  Take $t=(1+\varepsilon) c_n$ with $c_n = \log(na_n)$. Lemmas
  \ref{le:convergence regul} and \ref{le:distance X Y}, combined with the assumption made on $(a_n)$, show that the two terms
  on the right-hand side vanish as $n\to\infty$. Using Lemma
  \ref{le:distineqs}, the same result holds for $\mathrm{Hellinger}$.\\
  On the other hand, take $x_0^{n,i} = a_n$ for all $i$ and note that $\pi(x_0^n) = na_n$ goes to $+\infty$ as $n\to\infty$. By Corollary \ref{cor:LowerBd} we find
  \[\lim_{n\to\infty}%
    \sup_{x^n_0 \in [-a_n,a_n]^n}
    \mathrm{dist}(\mathrm{Law}(X^n_{(1-\varepsilon)c_n})\mid P_n^\beta)
    = 1\;\]
    whenever $\mathrm{dist} \in \{\mathrm{TV},\mathrm{Hellinger}\}$.
\end{proof}

\subsection{Proof of Theorem \ref{th:DOUK}}
\label{se:DOUK}

\begin{proof}[Proof of Theorem \ref{th:DOUK}]

  \emph{Lower bound.} The contraction property provided by Lemma
  \ref{le:contraction} gives
  \begin{equation*}
    \mathrm{Kullback}(\mathrm{Law}(X_t^n)\mid P_n^\beta)
    \geq \mathrm{Kullback}( \mathrm{Law}(\pi(X_t^n))\mid P_n^\beta \circ \pi^{-1}).
  \end{equation*}
  By Theorem \ref{th:DOUOU} $P_n\circ\pi^{-1}=\mathcal{N}(0,1)$ and
  $Y=\pi(X^n)$ is an OU process weak solution of
  $Y_0=\pi(X^n_0)$ and $\mathrm{d}Y_t=\sqrt{2}\mathrm{d}B_t-Y_t\mathrm{d}t$.
  In particular for all $t\geq0$, $\mathrm{Law}(Y_t)$ is a mixture of Gaussian
  laws in the sense that for any measurable test function $g$ with polynomial
  growth,
  \[
    \mathbb{E}_{\mathrm{Law}(Y_t)}[g]
    =\mathbb{E}[g(Y_t)]=\mathbb{E}[G_t(Y_0)]
    \quad\text{where}\quad
    G_t(y)=\mathbb{E}_{\mathcal{N}(y\mathrm{e}^{-t},1-\mathrm{e}^{-2t})}[g].
  \]  
  Now we use (again) the variational formula used in the proof of Lemma
  \ref{le:contraction} to get
  \begin{equation*}
    \mathrm{Kullback}( \mathrm{Law}(\pi(X_t^n))\mid P_n^\beta \circ \pi^{-1})
    =\sup_{g}\{\mathbb{E}_{\mathrm{Law}(\pi(X_t^n))}[g]-\log\mathbb{E}_{\mathcal{N}(0,1)}[\mathrm{e}^g] \},
  \end{equation*}
  and taking for $g$ the linear function defined by $g(x)=\lambda x$ for all
  $x\in\mathbb{R}$ and for some $\lambda\neq 0$ yields
  \begin{equation*}
    \mathrm{Kullback}( \mathrm{Law}(\pi(X_t^n))\mid P_n^\beta \circ \pi^{-1})
    \geq\lambda\mathrm{e}^{-t}\sum_{i=1}^n \int x\mu_i(\mathrm{d}x)-\frac{\lambda^2}{2}.
  \end{equation*}
  Finally, by using the assumption on first moment and taking $\lambda$ small
  enough we get, for all $\varepsilon\in(0,1)$,
  \begin{equation*}
    \lim_{n\to \infty}\mathrm{Kullback}(
    \mathrm{Law}(\pi(X_{(1-\varepsilon)\log(n)}^n)\mid P_n^\beta \circ \pi^{-1})
    =+\infty,
  \end{equation*}

  \emph{Upper bound.} From Lemma \ref{le:expdec} we have, for all $t\geq0$,
  \begin{equation*}
    \mathrm{Kullback}(\mathrm{Law}(X_t^n)\mid P_n^\beta)
    \leq \mathrm{Kullback}(\mathrm{Law}(X_0^n)\mid P_n^\beta)\mathrm{e}^{-2t}.
  \end{equation*}
  Arguing like in the proof of Lemma \ref{le:convergence regul} and using the
  contraction property of $\mathrm{Kullback}$ provided by Lemma
  \ref{le:contraction} for the map $\Psi$ defined in \eqref{eq:Psi}, we can write the following
  decomposition
  \begin{align*}
    \mathrm{Kullback}(\mathrm{Law}(X_0^n)\mid P_n^\beta)
    &\leq\mathrm{Kullback}(\otimes_{i=1}^n\mu_i\mid P_{*n}^\beta)\\
    &=S(\otimes_{i=1}^n\mu_i)+\mathbb{E}_{\otimes_{i=1}^n\mu_i}[E]+\log C_{*n}^\beta\\
    &\leq\sum_{i=1}^nS(\mu_i)+\sum_{i\ne j}\iint\Phi \mathrm{d}\mu_i \otimes \mathrm{d}\mu_j
      +Cn^2.
  \end{align*}
  Combining \eqref{eq:logZn} with the assumptions on the $\mu_i$'s
  yields for some constant $C>0$
  \begin{equation*}
    \mathrm{Kullback}(\mathrm{Law}(X_0^n)\mid P_n^\beta)\leq Cn^2
  \end{equation*}
  and it follows finally that for all $\varepsilon\in(0,1)$,
  \begin{equation*}
    \lim_{n\to \infty}\mathrm{Kullback}(\mathrm{Law}(X_{(1+\varepsilon)\log(n)})\mid P_n^\beta)=0.
  \end{equation*}
\end{proof}

\section{Cutoff phenomenon for the DOU in Wasserstein}
\label{se:DOUW}

\subsection{Proofs of Theorem \ref{th:DOUWTV} and Corollary \ref{cor:DOUWTV}
  in Wasserstein}

Let ${(X_t)}_{t\geq0}$ be the DOU process. By Lemma \ref{le:expdec}, for all
$t\geq0$ and all initial conditions $X_0 \in \overline{D}_n$,
\begin{equation*}
  \mathrm{Wasserstein}^2(
  \mathrm{Law}(X_t),P_n^\beta)
  \leq\mathrm{e}^{-2t}
  \mathrm{Wasserstein}^2(\mathrm{Law}(X_0),P_n^\beta).
\end{equation*}
Suppose now that $\mathrm{Law}(X^n_0)=\delta_{x^n_0}$. Then the triangle inequality for the Wasserstein distance gives
\begin{equation*}
  \mathrm{Wasserstein}^2(\delta_{x^n_0},P_n^\beta) %
  = \int\left|x^n_0 - x \right|^2 P_n^\beta(\mathrm{d}x) %
  \le 2 |x^n_0|^2 +2 \int\left|x\right|^2P_n^\beta(\mathrm{d}x).
\end{equation*}
By Theorem \ref{th:DOUOU}, the mean at equilibrium of $|X_t^n|^2$ equals $1 + \frac{\beta}{2}(n-1)$ and therefore
\[
  \int\left|x\right|^2P_n^\beta(\mathrm{d}x)
  =1+\frac{\beta}{2}(n-1).
\]
We thus get
\[
  \mathrm{Wasserstein}^2(\mathrm{Law}(X^n_t),P_n^\beta)
  \leq 2(|x^n_0|^2+1+\frac{\beta}{2}(n-1))\mathrm{e}^{-2t}.
\]
Set $c_n := \log(|x_0^n|) \vee \log(\sqrt{n})$. For any $\varepsilon \in (0,1)$, we have
\[
  \lim_{n\to\infty} \mathrm{Wasserstein}(\mathrm{Law}(X^n_{(1+\varepsilon)c_n}),P_n^\beta)
  =0
\]
and this concludes the proof of Theorem \ref{th:DOUWTV} in the Wasserstein distance.\\
Regarding the proof of Corollary \ref{cor:DOUWTV}, if $x_0^n \in [-a_n,a_n]^n$
then $|x_0^n| \le \sqrt{n} a_n$. Therefore if $\inf_n a_n > 0$, setting
$c_n = \log(\sqrt{n} a_n)$ we find, as required,
\[
  \lim_{n\to\infty}%
  \sup_{x_0^n \in [-a_n,a_n]^n}%
  \mathrm{Wasserstein}(\mathrm{Law}(X^n_{(1+\varepsilon)c_n}),P_n^\beta)%
  =0.
\]

\subsection{Proof of Theorem \ref{th:DOUW}}

This is an adaptation of the previous proof. We compute
\begin{align*}
  \mathrm{Wasserstein}^2(\delta_{x_0^n},P_n^\beta)
  &= \int\left|x^n_0 - x \right|^2P_n^\beta(\mathrm{d}x)\\
  &\le 2\left|x^n_0 - \rho_n\right|^2 + 2\int\left|\rho_n -x \right|^2P_n^\beta(\mathrm{d}x),
\end{align*}
where $\rho_n \in D_n$ is the vector of the quantiles of order $1/n$ of the
semi-circle law as in \eqref{eq:qn}. The rigidity estimates established in
\cite[Th.~2.4]{MR3253704} justify that
\begin{equation*}\label{Eq:Rigidity}
  \lim_{n\to\infty}\int\left|\rho_n -x \right|^2P_n^\beta(\mathrm{d}x)=0.
\end{equation*}
If $|x^n_0-\rho_n|$ diverges with $n$, we deduce that for all
$\varepsilon\in (0,1)$, with $t_n = \log(|x^n_0-\rho_n|)$,
\[
  \lim_{n\to\infty}
  \mathrm{Wasserstein}(\mathrm{Law}(X^n_{(1+\varepsilon)t_n}),P_n^\beta)
  =0.
\]
On the other hand, if $|x^n_0-\rho_n|$ converges to some limit $\alpha$ then
we easily get, for any $t\ge 0$,
\[
  \varlimsup_{n\to\infty} \mathrm{Wasserstein}^2(\mathrm{Law}(X^n_{t}),P_n^\beta) %
  \le \alpha^2\mathrm{e}^{-2t}.
\]

\begin{remark}[High-dimensional phenomena]
  With $X_n\sim P_n^\beta$, in the bias-variance decomposition
  \[
    \int\left|\rho_n -x \right|^2P_n^\beta(\mathrm{d}x)
    =|\mathbb{E}X_n-\rho_n|^2+\mathbb{E}(|X_n-\mathbb{E}X_n|^2),
  \]
  the second term of the right hand side is a variance term that measures the
  concentration of the log-concave random vector $X_n$ around its mean
  $\mathbb{E}X_n$, while the first term in the right hand side is a bias term
  that measures the distance of the mean $\mathbb{E}X_n$ to the mean-field
  limit $\rho_n$. Note also that
  $\mathbb{E}(|X_n-\mathbb{E}X_n|^2)=\mathbb{E}(|X_n|^2)-|\mathbb{E}X_n|^2=1+\frac{\beta}{2}(n-1)-|\mathbb{E}X_n|^2$,
	reducing the problem to the mean. We refer to \cite{zbMATH02165102} for a fine asymptotic analysis in the determinantal case $\beta=2$.
\end{remark}

\appendix

\section{Distances and divergences}
\label{ap:distances}

We use the following standard distances and divergences to quantify the trend
to equilibrium of Markov processes and to formulate the cutoff phenomena.

The \emph{Wasserstein--Kantorovich--Monge transportation distance} of order
$2$ and with respect to the underlying Euclidean distance is defined for all
probability measures $\mu$ and $\nu$ on $\mathbb{R}^n$ by
\begin{equation}
  \mathrm{Wasserstein}(\mu,\nu)
  =\Bigr(\inf_{(X,Y)}\mathbb{E}[\left|X-Y\right|^2]\Bigr)^{1/2}
  \in[0,+\infty]
\end{equation}
where $|x|=\sqrt{x_1^2+\cdots+x_n^2}$ and where the inf runs over all couples
$(X,Y)$ with $X\sim\mu$ and $Y\sim\nu$.

\medskip

The \emph{total variation distance} between probability measures $\mu$ and
$\nu$ on the same space is
\begin{equation}
  \left\Vert\mu-\nu\right\Vert_{\mathrm{TV}}
  =\sup_A|\mu(A)-\nu(A)|\in[0,1]
\end{equation}
where the supremum runs over Borel subsets. If $\mu$ and $\nu$ are absolutely
continuous with respect to a reference measure $\lambda$ with densities
$f_\mu$ and $f_\nu$ then
$\|\mu-\nu\|_{\mathrm{TV}}=\frac{1}{2}\int|f_\mu-f_\nu|\mathrm{d}\lambda=\frac{1}{2}\|f_\mu-f_\nu\|_{L^1(\lambda)}$.

\medskip

The \emph{Hellinger distance} between probability measures $\mu$ and $\nu$
with densities $f_\mu$ and $f_\nu$ with respect to the same reference measure
$\lambda$ is
\begin{equation}
  \mathrm{Hellinger}(\mu,\nu)
  =\Bigr(\int\frac{1}{2}(\sqrt{f_\mu}-\sqrt{f_\nu})^2\mathrm{d}\lambda\Bigr)^{1/2}
  =\Bigr(1-\int\sqrt{f_\mu f_\nu}\mathrm{d}\lambda\Bigr)^{1/2}
  \in[0,1].
\end{equation}
This quantity does not depend on the choice of $\lambda$. We have
$\mathrm{Hellinger}(\mu,\nu)=\frac{1}{\sqrt{2}}\Vert\sqrt{f_\mu}-\sqrt{f_\nu}\Vert_{L^2(\lambda)}$. Note that an alternative normalization is sometimes considered in the literature, making the maximal value of the Hellinger distance equal $\sqrt{2}$.

\medskip

The \emph{Kullback--Leibler divergence or relative entropy} is defined by
\begin{equation}
  \mathrm{Kullback}(\nu\mid\mu)
  =\int\log{\textstyle\frac{\mathrm{d}\nu}{\mathrm{d}\mu}}\mathrm{d}\nu
  =\int{\textstyle\frac{\mathrm{d}\nu}{\mathrm{d}\mu}}
  \log{\textstyle\frac{\mathrm{d}\nu}{\mathrm{d}\mu}}\mathrm{d}\mu
  \in[0,+\infty]
\end{equation}
if $\nu$ is absolutely continuous with respect to $\mu$, and $\mathrm{Kullback}(\nu\mid\mu)=+\infty$
otherwise.

\medskip

The \emph{$\chi^2$ divergence} is given by
\begin{equation}
  \chi^2(\nu\mid\mu)=
  \left\| \textstyle\frac{\mathrm{d}\nu}{\mathrm{d}\mu}
    -1\right\|_{L^2(\mu)}^2
  = \int \left|\frac{\mathrm{d}\nu}{\mathrm{d}\mu} -1\right|^2\mathrm{d}\mu
  =\|\frac{\mathrm{d}\nu}{\mathrm{d}\mu}\|_{L^2(\mu)}^2-1
  \in[0,+\infty].
\end{equation}
We set it to $+\infty$ if $\nu$ is not absolutely continuous with respect to
$\mu$. If $\mu$ and $\nu$ have densities $f_\mu$ and $f_\nu$ with respect to a
reference measure $\lambda$ then
$\chi^2(\nu\mid\mu)=\int(f_\nu^2/f_\mu)\mathrm{d}\lambda-1$.

\medskip

The (logarithmic) \emph{Fisher information or divergence} is defined by 
\begin{equation}
  \mathrm{Fisher}(\nu\mid\mu)
  =\int\left|\nabla\log\textstyle\frac{\mathrm{d}\nu}{\mathrm{d}\mu}\right|^2\mathrm{d}\nu
  =\int\frac{|\nabla\frac{\mathrm{d}\nu}{\mathrm{d}\mu}|^2}{\frac{\mathrm{d}\nu}{\mathrm{d}\mu}}\mathrm{d}\mu
  =4\int\left|\nabla\sqrt{\textstyle\frac{\mathrm{d}\nu}{\mathrm{d}\mu}}\right|^2\mathrm{d}\mu
  \in[0,+\infty]
\end{equation}
if $\nu$ is absolutely continuous with respect to $\mu$, and
$\mathrm{Fisher}(\nu\mid\mu)=+\infty$ otherwise.

Each of these distances or divergences has its advantages and drawbacks. In
some sense, the most sensitive is Fisher due to its Sobolev nature, then
$\chi^2$, then Kullback which can be seen as a sort of $L^{1+}=L\log L$ norm,
then TV and Hellinger which are comparable, then Wasserstein, but this rough
hierarchy misses some subtleties related to some scales and nature of the
arguments.

Some of these distances or divergences can generically be compared as the
following result shows.

\begin{lemma}[Inequalities]\label{le:distineqs}
  For any probability measures $\mu$ and $\nu$ on the same space,
  \begin{align*}
    \left\Vert\mu-\nu\right\Vert_{\mathrm{TV}}^2
    &\leq2\mathrm{Kullback}(\nu\mid\mu)\\
    2\mathrm{Hellinger}^2(\mu,\nu)
    &\leq\mathrm{Kullback}(\nu\mid\mu)\\
    \mathrm{Kullback}(\nu\mid\mu)
    &\leq 2\chi(\nu\mid\mu)+\chi^2(\nu\mid\mu)\\
    \mathrm{Hellinger}^2(\mu,\nu)\leq\|\mu-\nu\|_{\mathrm{TV}}
    &\leq \mathrm{Hellinger}(\mu,\nu)\sqrt{2-\mathrm{Hellinger}(\mu,\nu)^2}.
  \end{align*}
\end{lemma}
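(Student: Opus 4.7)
The plan is to establish each of the four inequalities by combining elementary pointwise estimates with Jensen's inequality and the Cauchy--Schwarz inequality. Throughout I would fix a common dominating measure $\lambda$ with densities $f_\mu, f_\nu$, and whenever the right-hand side of any of the four inequalities is $+\infty$ (for instance when $\nu \not\ll \mu$ in the Kullback or $\chi^2$ case), the inequality is vacuous, so I may assume absolute continuity.

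First I would handle the Hellinger versus total variation bounds, since they are self-contained and feed into the Pinsker-type estimate below. The key identity is the factorisation $|f_\mu - f_\nu| = |\sqrt{f_\mu} - \sqrt{f_\nu}|(\sqrt{f_\mu} + \sqrt{f_\nu})$. Combined with the pointwise estimate $(\sqrt a - \sqrt b)^2 \leq |a-b|$ it gives, upon integration, $\mathrm{Hellinger}^2(\mu,\nu) \leq \|\mu-\nu\|_{\mathrm{TV}}$, while Cauchy--Schwarz applied to the same factorisation, together with the elementary identity $\int(\sqrt{f_\mu}+\sqrt{f_\nu})^2 \mathrm{d}\lambda = 2(2 - \mathrm{Hellinger}^2(\mu,\nu))$, yields the upper bound $\|\mu-\nu\|_{\mathrm{TV}} \leq \mathrm{Hellinger}(\mu,\nu)\sqrt{2-\mathrm{Hellinger}^2(\mu,\nu)}$.

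Next I would prove $2\,\mathrm{Hellinger}^2(\mu,\nu) \leq \mathrm{Kullback}(\nu\mid\mu)$ by Jensen's inequality. Writing $f = \mathrm{d}\nu/\mathrm{d}\mu$ one has $\int \sqrt f \, \mathrm{d}\mu = 1 - \mathrm{Hellinger}^2(\mu,\nu)$, and applying Jensen to the concave map $\log$ under $\nu$ gives
\[
-\tfrac{1}{2}\mathrm{Kullback}(\nu\mid\mu) = \int \log(1/\sqrt f) \, \mathrm{d}\nu \leq \log \int \sqrt f \, \mathrm{d}\mu = \log(1 - \mathrm{Hellinger}^2(\mu,\nu)),
\]
after which $-\log(1-x) \geq x$ closes the step. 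The Pinsker-type bound $\|\mu-\nu\|_{\mathrm{TV}}^2 \leq 2\,\mathrm{Kullback}(\nu\mid\mu)$ then follows by chaining: $\|\mu-\nu\|_{\mathrm{TV}}^2 \leq 2\,\mathrm{Hellinger}^2(\mu,\nu) \leq \mathrm{Kullback}(\nu\mid\mu) \leq 2\,\mathrm{Kullback}(\nu\mid\mu)$. If one wished instead to recover the sharp form $\|\mu-\nu\|_{\mathrm{TV}}^2 \leq \tfrac{1}{2}\mathrm{Kullback}(\nu\mid\mu)$ of Pinsker--Csisz\'ar--Kullback, one would invoke the pointwise inequality $3(x-1)^2 \leq (2x+4)(x\log x - x + 1)$ together with Cauchy--Schwarz against $\mathrm{d}\mu$.

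Finally, for $\mathrm{Kullback}(\nu\mid\mu) \leq 2\chi(\nu\mid\mu) + \chi^2(\nu\mid\mu)$, the elementary inequality $\log x \leq x - 1$ applied pointwise with $x = f$ gives $f \log f \leq f(f-1) = f^2 - f$, and integration against $\mathrm{d}\mu$ yields the stronger bound $\mathrm{Kullback}(\nu\mid\mu) \leq \int f^2 \mathrm{d}\mu - 1 = \chi^2(\nu\mid\mu)$, which trivially implies the stated estimate since $\chi \geq 0$. None of these four steps presents a real obstacle; the only point requiring a little care is the direction of Jensen's inequality in the Hellinger--Kullback bound, and the bookkeeping to propagate absolute continuity hypotheses through the chain.
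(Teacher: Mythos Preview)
Your proof is correct and self-contained. The paper does not actually prove this lemma; it simply refers to \cite[p.~61--62]{MR1873379}. Your argument in fact yields slightly stronger statements than those listed: the chaining through Hellinger already gives $\|\mu-\nu\|_{\mathrm{TV}}^2 \leq \mathrm{Kullback}(\nu\mid\mu)$ (rather than $\leq 2\,\mathrm{Kullback}$), and the pointwise bound $\log x \leq x-1$ gives $\mathrm{Kullback}(\nu\mid\mu) \leq \chi^2(\nu\mid\mu)$ directly, making the extra $2\chi$ term superfluous. The derivations are clean; the only cosmetic point is that in the Hellinger--Kullback step you silently use $\int (1/\sqrt f)\,\mathrm{d}\nu = \int \sqrt f\,\mathrm{d}\mu$, which is immediate from $\mathrm{d}\nu = f\,\mathrm{d}\mu$ but might deserve one word.
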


We refer to \cite[p.~61-62]{MR1873379} for a proof. The inequality between the
total variation distance and the relative entropy is known as the Pinsker
or Csiszár--Kullback inequality, while the inequalities between the total
variation distance and the Hellinger distance are due to Kraft. There are
many other metrics between probability measures, see for instance
\cite{zbMATH00049698,zbMATH02124714} for a discussion.

\medskip

The total variation distance can also be seen as a special Wasserstein
distance of order $1$ with respect to the atomic distance, namely
\begin{equation}\label{eq:TVW}
  \left\|\mu-\nu\right\|_{\mathrm{TV}}=\inf_{(X,Y)}\mathbb{P}(X\neq
  Y)=\inf_{(X,Y)}\mathbb{E}[\mathbf{1}_{X\neq Y}]\in[0,1]
\end{equation}
where the infimum runs over all couplings $X\sim\mu$ and $Y\sim\nu$. This
explains in particular why $\mathrm{TV}$ is more sensitive than
$\mathrm{Wasserstein}$ at short scales but less sensitive at large scales, a
consequence of the sensitivity difference between the underlying atomic and
Euclidean distances. The probabilistic representations of $\mathrm{TV}$ and
$\mathrm{Wasserstein}$ make them compatible with techniques
of coupling, which play an important role in the literature on convergence to
equilibrium of Markov processes.

We gather now useful results on distances and divergences.

\begin{lemma}[Contraction properties]\label{le:contraction}
  Let $\mu$ and $\nu$ be two probability measures on a same measurable space $S$. Let $f:S\mapsto T$ be a measurable function, where $T$ is another measurable space.
  \begin{itemize}
  \item If $\mathrm{dist}\in\{\mathrm{TV},\mathrm{Kullback},\chi^2\}$ then
  \[
  	\mathrm{dist}(\nu \circ f^{-1}\mid \mu \circ f^{-1})
  	\leq \mathrm{dist}(\nu \mid \mu).
  \]
  \item If $S=\mathbb{R}^n$, $T=\mathbb{R}^k$ then, denoting
    $\left\|f\right\|_{\mathrm{Lip}}=\sup_{x\neq y}\frac{|f(x)-f(y)|}{|x-y|}$,
    \[
      \mathrm{Wasserstein}(\mu\circ f^{-1},\nu\circ f^{-1})
      \leq\left\|f\right\|_{\mathrm{Lip}}\mathrm{Wasserstein}(\mu,\nu).
    \]
  \end{itemize}
\end{lemma}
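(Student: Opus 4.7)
The plan is to handle Wasserstein by a direct coupling argument, the total variation distance by its variational characterization over sets, and the two $f$-divergences ($\mathrm{Kullback}$ and $\chi^2$) in a unified way by invoking Jensen's inequality applied to a conditional expectation, which is the standard \emph{data processing inequality}.

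For Wasserstein, I would start from the coupling definition. Given any coupling $(X,Y)$ with $X\sim\mu$ and $Y\sim\nu$, the push-forward $(f(X),f(Y))$ is a coupling of $\mu\circ f^{-1}$ and $\nu\circ f^{-1}$. Since $|f(X)-f(Y)|\le \|f\|_{\mathrm{Lip}}|X-Y|$ pointwise, taking the $L^2$-norm and then the infimum over couplings yields the asserted bound. For total variation, I would rely on the set-based variational formula: for every Borel $B\subset T$, the preimage $f^{-1}(B)$ is a Borel subset of $S$, and
\[
|\mu\circ f^{-1}(B)-\nu\circ f^{-1}(B)|=|\mu(f^{-1}(B))-\nu(f^{-1}(B))|\le \|\mu-\nu\|_{\mathrm{TV}}.
\]
Taking the supremum over $B$ gives the contraction.

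For $\mathrm{Kullback}$ and $\chi^2$, the key observation is that both are $f$-divergences, namely $D_\phi(\nu\mid\mu)=\int \phi(\mathrm{d}\nu/\mathrm{d}\mu)\,\mathrm{d}\mu$ with $\phi(x)=x\log x$ and $\phi(x)=(x-1)^2$ respectively, and both $\phi$ are convex. Assuming $\nu\ll\mu$ (otherwise the right-hand side is $+\infty$ and there is nothing to prove), one has $\nu\circ f^{-1}\ll\mu\circ f^{-1}$ with
\[
\frac{\mathrm{d}(\nu\circ f^{-1})}{\mathrm{d}(\mu\circ f^{-1})}(y)=\mathbb{E}_\mu\!\left[\frac{\mathrm{d}\nu}{\mathrm{d}\mu}\,\Big|\,f=y\right].
\]
Applying Jensen's inequality to $\phi$ conditionally on $f$, then integrating against $\mu\circ f^{-1}$ and using the tower property, gives
\[
\int\phi\!\left(\frac{\mathrm{d}(\nu\circ f^{-1})}{\mathrm{d}(\mu\circ f^{-1})}\right)\mathrm{d}(\mu\circ f^{-1})\le \int\phi\!\left(\frac{\mathrm{d}\nu}{\mathrm{d}\mu}\right)\mathrm{d}\mu,
\]
which is exactly the desired contraction for $\mathrm{Kullback}$ and $\chi^2$.

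The main (minor) technical point is the identification of the Radon--Nikodym derivative of the push-forward as a conditional expectation; this is a standard disintegration argument, and one could alternatively bypass it entirely for $\mathrm{Kullback}$ by invoking the Donsker--Varadhan variational formula $\mathrm{Kullback}(\nu\mid\mu)=\sup_g\{\mathbb{E}_\nu[g]-\log\mathbb{E}_\mu[\mathrm{e}^g]\}$, restricted to $g$ of the form $g\circ f$, which yields the inequality immediately. For $\chi^2$, a parallel shortcut exists via the dual formula $\chi^2(\nu\mid\mu)=\sup_g\{2\mathbb{E}_\nu[g]-\mathbb{E}_\mu[g^2]-2\mathbb{E}_\mu[g]\cdot 0\}$ obtained by Legendre duality for $(x-1)^2$. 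There is no substantive obstacle in this lemma; it is a packaging of classical facts.
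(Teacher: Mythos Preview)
Your proposal is correct. For Wasserstein and total variation your arguments coincide with the paper's. For $\mathrm{Kullback}$ and $\chi^2$ the paper inverts your order of presentation: it takes the variational (Donsker--Varadhan) formula as the primary route and derives contraction by restricting to test functions of the form $h\circ f$, then remarks that the analogous variational formula for the $\Phi$-entropy with $\Phi(u)=u^2$ handles $\chi^2$. Your primary route via the data processing inequality (conditional Jensen applied to the identification $\mathrm{d}(\nu\circ f^{-1})/\mathrm{d}(\mu\circ f^{-1})=\mathbb{E}_\mu[\mathrm{d}\nu/\mathrm{d}\mu\mid f]$) is equally standard and has the minor advantage of treating both $f$-divergences in one stroke without spelling out the dual. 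The two approaches are essentially Legendre-dual to each other and neither offers a real gain here. One cosmetic point: your stated dual formula for $\chi^2$ has a garbled term (the ``$\cdot\,0$''); the correct Legendre dual of $\phi(x)=(x-1)^2$ gives $\chi^2(\nu\mid\mu)=\sup_g\{\mathbb{E}_\nu[g]-\mathbb{E}_\mu[g]-\tfrac{1}{4}\mathbb{E}_\mu[g^2]\}$, but this is a side remark that does not affect your main argument.
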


The notation $f^{-1}$ stands for the reciprocal map
$f^{-1}(A)=\{y\in S:f(x)\in A\}$ and $\mu\circ f^{-1}$ is the image measure or
push-forward of $\mu$ by the map $f$, defined by
$(\mu\circ f^{-1})(A)=\mu(f^{-1}(A))$. In terms of random variables we have
$Y\sim\mu\circ f^{-1}$ if and only $Y=f(X)$ where $X\sim\mu$.

The proof of the contraction properties of Lemma \ref{le:contraction} are all
based on variational formulas. Note that following
\cite[Ex.~22.20~p.~588]{zbMATH05306371}, there is a variational formula for
$\mathrm{Fisher}$ that comes from its dual representation as an inverse Sobolev
norm. We do not develop this idea in this work.

\begin{proof}
  The proof of the contraction property for Wasserstein comes from the fact
  that every coupling of $\mu$ and $\nu$ produces a coupling for
  $\mu\circ f^{-1}$ and $\nu\circ f^{-1}$. Regarding TV, the contraction
  property is a consequence of the definition of this distance and of
  measurability. In the case of Kullback, the property can be proved using the
  following well known variational formula:
  \[
    \mathrm{Kullback}(\nu\mid\mu)
    =\sup_{g}\{\mathbb{E}_{\nu }[g]-\log\mathbb{E}_{\mu}[\mathrm{e}^g] \}
  \]
  where the supremum runs over all $g\in L^1(\nu)$, or by approximation when
  the supremum runs over all bounded measurable $g$. This variational formula
  can be derived for instance by applying Jensen's inequality to
  $- \log \mathbb{E}_{\nu}[\mathrm{e}^g \frac{\mathrm{d}\mu}{\mathrm{d}\nu}]$.
  Equality is achieved for
  $g=\log(\mathrm{d}\nu/\mathrm{d}\mu)$. Now, taking $g=h\circ f$ gives
  \[
    \mathrm{Kullback}(\nu \mid \mu)
    \geq \mathbb{E}_{\nu \circ f^{-1} }[h]-\log\mathbb{E}_{\mu \circ f^{-1} }[\mathrm{e}^h],  
  \]
  and it remains to take the supremum over $h$ to get 
  \[
    \mathrm{Kullback}(\nu\mid\mu)
    \geq \mathrm{Kullback}(\nu \circ f^{-1} \mid \mu \circ f^{-1}).
  \]
  The variational formula for $\mathrm{Kullback}(\cdot\mid\mu)$ is a
  manifestation of its convexity, it expresses this functional as the envelope
  of its tangents, its Fenchel--Legendre transform or convex dual is the
  log-Laplace transform. Such a variational formula is equivalent to
  tensorization, and is available for all $\Phi$-entropies such that
  $(u,v)\mapsto\Phi''(u)v^2$ is convex, see \cite[Th.~4.4]{zbMATH05216861}. In
  particular, the analogous variational formula as well as the consequence in
  terms of contraction are also available for $\chi^2$ which corresponds to
  the $\Phi$-entropy with $\Phi(u)=u^2$ (variance as a $\Phi$-entropy).
\end{proof}

\begin{lemma}[Scale invariance versus homogeneity]\label{le:scale}
  The total variation distance is scale invariant while the Wasserstein
  distance is homogeneous just like a norm, namely for all probability
  measures $\mu$ and $\nu$ on $\mathbb{R}^n$ and all scaling factor
  $\sigma\in(0,\infty)$, denoting $\mu_\sigma=\mathrm{Law}(\sigma X)$ where
  $X\sim\mu$, we have
  \[
    \|\mu_\sigma-\nu_\sigma\Vert_{\mathrm{TV}}=\|\mu-\nu\Vert_{\mathrm{TV}}%
    \quad\text{while}\quad%
    \mathrm{Wasserstein}(\mu_\sigma,\nu_\sigma)=\sigma\mathrm{Wasserstein}(\mu,\nu).
  \]
\end{lemma}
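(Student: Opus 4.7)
The plan is to derive both identities directly from the definitions, since the map $T_\sigma:x\mapsto \sigma x$ with $\sigma\in(0,\infty)$ is a bi-measurable bijection of $\mathbb{R}^n$ whose inverse is $T_{1/\sigma}$. In particular $\mu_\sigma=\mu\circ T_\sigma^{-1}$ and $\nu_\sigma=\nu\circ T_\sigma^{-1}$.

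First I would handle the total variation identity. One route is to apply Lemma \ref{le:contraction} to the map $T_\sigma$ to get $\|\mu_\sigma-\nu_\sigma\|_{\mathrm{TV}}\leq\|\mu-\nu\|_{\mathrm{TV}}$, and then apply it again to the inverse map $T_{1/\sigma}$ to obtain the reverse inequality, thereby concluding equality. Equivalently, I could argue directly from the supremum definition: for any Borel set $A\subset\mathbb{R}^n$, one has $\mu_\sigma(A)-\nu_\sigma(A)=\mu(T_\sigma^{-1}A)-\nu(T_\sigma^{-1}A)$, and when $A$ ranges over Borel subsets so does $T_\sigma^{-1}A=\sigma^{-1}A$; taking the supremum gives the claim.

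Next I would treat the Wasserstein identity by the coupling definition. The key observation is a bijection between couplings: if $(X,Y)$ is a coupling of $(\mu,\nu)$, then $(\sigma X,\sigma Y)$ is a coupling of $(\mu_\sigma,\nu_\sigma)$, and conversely every coupling of $(\mu_\sigma,\nu_\sigma)$ arises this way (scale back by $1/\sigma$). Since $\mathbb{E}[|\sigma X-\sigma Y|^2]=\sigma^2\,\mathbb{E}[|X-Y|^2]$, taking the infimum on both sides yields $\mathrm{Wasserstein}(\mu_\sigma,\nu_\sigma)^2=\sigma^2\,\mathrm{Wasserstein}(\mu,\nu)^2$, and hence the stated homogeneity after taking square roots.

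There is no real obstacle here: the lemma is essentially a book-keeping exercise contrasting the atomic nature of the discrete metric (underlying $\mathrm{TV}$) with the homogeneity of the Euclidean distance (underlying $\mathrm{Wasserstein}$). The only mild point to keep in mind is to ensure the argument remains valid for arbitrary, possibly non-absolutely continuous, probability measures, which is automatic since both arguments are purely measure-theoretic and only use the bijectivity of $T_\sigma$ and the scaling of the Euclidean norm.
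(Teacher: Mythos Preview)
Your proposal is correct and follows essentially the same approach as the paper: for $\mathrm{TV}$ you use the bijection $A\mapsto\sigma^{-1}A$ on Borel sets (the paper phrases it as $A\mapsto A_\sigma$), and for $\mathrm{Wasserstein}$ you use the bijection between couplings induced by scaling, exactly as the paper does. The only extra you add is the alternative route for $\mathrm{TV}$ via Lemma~\ref{le:contraction} applied to $T_\sigma$ and $T_{1/\sigma}$, which is a fine equivalent argument.
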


\begin{proof}
  For the Wasserstein distance, the result follows from
  \[
    \mathrm{Wasserstein}(\mu_\sigma,\nu_\sigma) = \Bigr(\inf_{(X,Y)}\mathbb{E}[\left|\sigma X-\sigma Y\right|^2]\Bigr)^{1/2} = \sigma \mathrm{Wasserstein}(\mu,\nu)\;,
  \]
  while for the $\mathrm{TV}$ distance, it comes from the fact that $A\mapsto A_\sigma := \{\sigma x: x\in A\}$ is a bijection.
\end{proof}
We turn to the behavior of the distances/divergences under tensorization.

\begin{lemma}[Tensorization]\label{le:tenso}
  For all probability measures $\mu_1,\ldots,\mu_n$ and $\nu_1,\ldots,\nu_n$
  on $\mathbb{R}$, we have
  \begin{align*}
    \mathrm{Hellinger}^2(\otimes_{i=1}^n\mu_i,\otimes_{i=1}^n\nu_i)
    &=1-\prod_{i=1}^n\Bigr(1-\mathrm{Hellinger}^2(\mu_i,\nu_i)\Bigr),\\
    \mathrm{Kullback}(\otimes_{i=1}^n\nu_i\mid\otimes_{i=1}^n\mu_i)
    &=\sum_{i=1}^n\mathrm{Kullback}(\nu_i\mid\mu_i),\\
    \chi^2(\otimes_{i=1}^n\mu_i\mid\otimes_{i=1}^n\nu_i)
    &=-1+\prod_{i=1}^n(\chi^2(\mu_i,\nu_i)+1),\\
    \mathrm{Fisher}(\otimes_{i=1}^n\nu_i\mid\otimes_{i=1}^n\mu_i)
    &=\sum_{i=1}^n\mathrm{Fisher}(\nu_i\mid\mu_i),\\
    \mathrm{Wasserstein}^2(\otimes_{i=1}^n\mu_i,\otimes_{i=1}^n\nu_i)
    &=\sum_{i=1}^n\mathrm{Wasserstein}^2(\mu_i,\nu_i),\\
    \max_{1\leq i\leq n}\Vert\mu_i-\nu_i\Vert_{\mathrm{TV}}
    \leq \Vert\otimes_{i=1}^n\mu_i-\otimes_{i=1}^n\nu_i\Vert_{\mathrm{TV}}%
    &\leq \sum_{i=1}^n\Vert\mu_i-\nu_i\Vert_{\mathrm{TV}}.
  \end{align*}
\end{lemma}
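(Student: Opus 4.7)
The plan is to treat the integral-based quantities (Hellinger, Kullback, $\chi^2$, Fisher) by direct computation on densities via Fubini's theorem, and to handle Wasserstein and total variation through their coupling representations.

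For the four integral quantities, fix a $\sigma$-finite reference measure $\lambda$ on $\mathbb{R}$ dominating each $\mu_i$ and $\nu_i$, and write $f_i = \mathrm{d}\mu_i/\mathrm{d}\lambda$, $g_i = \mathrm{d}\nu_i/\mathrm{d}\lambda$. Then $\otimes_{i=1}^n \mu_i$ has density $\prod_i f_i$ with respect to $\lambda^{\otimes n}$, and similarly for $\otimes_{i=1}^n \nu_i$. The Hellinger identity follows from $\int \sqrt{\prod_i f_i g_i}\,\mathrm{d}\lambda^{\otimes n} = \prod_i \int \sqrt{f_i g_i}\,\mathrm{d}\lambda$ together with the alternative expression $\mathrm{Hellinger}^2(\mu,\nu) = 1 - \int\sqrt{f_\mu f_\nu}\,\mathrm{d}\lambda$. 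The Kullback identity comes from $\log \prod_i (g_i/f_i) = \sum_i \log(g_i/f_i)$ combined with Fubini and the fact that each $g_j$ integrates to one. The $\chi^2$ identity follows from the multiplicativity $\int \prod_i (f_i^2/g_i)\,\mathrm{d}\lambda^{\otimes n} = \prod_i \int f_i^2/g_i\,\mathrm{d}\lambda$, after writing $\chi^2+1 = \int f^2/g\,\mathrm{d}\lambda$. For Fisher, use that $\log(\prod_i g_i / \prod_i f_i) = \sum_i \log(g_i/f_i)$ so $\nabla \log(\cdot)$ has $i$-th coordinate $\partial_i \log(g_i/f_i)$, and the squared norm tensorizes as a sum; integrating against $\prod_i g_i$ and applying Fubini leaves $\sum_i \mathrm{Fisher}(\nu_i\mid\mu_i)$. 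In each case the singular situations (when some $\nu_i \not\ll \mu_i$) are handled by the convention that both sides equal $+\infty$.

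For Wasserstein, the inequality $\le$ is obtained by taking independent optimal couplings $(X_i,Y_i)$ of $(\mu_i,\nu_i)$: then $((X_1,\ldots,X_n),(Y_1,\ldots,Y_n))$ couples the two product laws and $\mathbb{E}|X-Y|^2 = \sum_i \mathbb{E}|X_i-Y_i|^2 = \sum_i \mathrm{Wasserstein}^2(\mu_i,\nu_i)$. Conversely, given any coupling $(X,Y)$ of the product measures, the $i$-th marginal $(X_i,Y_i)$ couples $\mu_i$ and $\nu_i$, so $\mathbb{E}|X-Y|^2 = \sum_i \mathbb{E}|X_i-Y_i|^2 \ge \sum_i \mathrm{Wasserstein}^2(\mu_i,\nu_i)$, which gives equality after taking the infimum. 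For the total variation, the upper bound comes from the coupling representation \eqref{eq:TVW}: taking independent optimal couplings and a union bound, $\mathbb{P}(X \ne Y) \le \sum_i \mathbb{P}(X_i \ne Y_i) = \sum_i \|\mu_i - \nu_i\|_{\mathrm{TV}}$. The lower bound is obtained by testing on cylinder sets of the form $A_i \times \prod_{j\ne i} \mathbb{R}$, on which the two product measures differ by exactly $|\mu_i(A_i) - \nu_i(A_i)|$; taking the supremum over $A_i$ and then over $i$ yields $\max_i \|\mu_i - \nu_i\|_{\mathrm{TV}}$.

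There is no real obstacle here: every identity reduces to Fubini applied to a product density, or to the standard coupling construction. The only mild care is measure-theoretic, ensuring that absolute continuity holds globally iff it holds coordinate-wise (otherwise both sides of the relevant identity are $+\infty$), which is immediate from the product structure.
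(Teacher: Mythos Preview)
Your proof is correct and, for the Hellinger, Kullback, $\chi^2$, Fisher, and Wasserstein identities, proceeds exactly as the paper indicates (direct Fubini computations on product densities; product of optimal couplings for Wasserstein). For the total variation lower bound, your cylinder-set argument is precisely the contraction property (Lemma~\ref{le:contraction}) applied to the coordinate projections, so again this matches.

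The one genuine methodological difference is in the total variation \emph{upper} bound. You use the coupling representation \eqref{eq:TVW} together with a union bound on independent optimal couplings. The paper instead argues directly on densities via the telescoping inequality
\[
  |a_1\cdots a_n - b_1\cdots b_n|
  \leq \sum_{i=1}^n |a_i-b_i|\,(a_1\cdots a_{i-1})(b_{i+1}\cdots b_n),
\]
applied pointwise to $a_i=f_i(x_i)$, $b_i=g_i(x_i)$ and then integrated. Both arguments are standard and of comparable length; your coupling route is perhaps more conceptual and avoids any absolute-continuity bookkeeping, while the paper's telescoping identity has the minor advantage of staying purely at the level of $L^1$ densities without invoking the coupling characterization.
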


The equality for the Wasserstein distance comes by taking the product of
optimal couplings. The first inequality for the total variation distance comes
from its contraction property (Lemma \ref{le:contraction}), while the second
comes from
$|(a_1\cdots a_n)-(b_1\cdots b_n)|\leq\sum_{i=1}^n|a_i-b_i|(a_1\cdots
a_{i-1})(b_{i+1}\cdots b_n)$, $a_1,\ldots,a_n,b_1,\ldots,b_n\in[0,+\infty)$,
which comes itself from the triangle inequality on the telescoping sum
$\sum_{i=1}^n(c_i-c_{i-1})$ where $c_i=(a_1\cdots a_i)(b_{i+1}\cdots b_n)$ via
$c_i-c_{i-1}=(a_i-b_i)(a_1\cdots a_{i-1})(b_{i+1}\cdots b_n)$.

\begin{lemma}[Explicit formulas for Gaussian distributions]\label{le:gauss}
  For all $n\geq1$, $m_1,m_2\in\mathbb{R}^n$, and all $n\times n$ covariance
  matrices $\Sigma_1,\Sigma_2$, denoting $\Gamma_1=\mathcal{N}(\mu_1,\Sigma_1)$
  and $\Gamma_2=\mathcal{N}(\mu_2,\Sigma_2)$, we have
  \begin{align*}
    \mathrm{Hellinger}^2(\Gamma_1,\Gamma_2)
    &=1-\frac{\det(\Sigma_1\Sigma_2)^{1/4}}{\det(\frac{\Sigma_1+\Sigma_2}{2})^{1/2}}
      \mathrm{e}^{-\frac{1}{4}(\Sigma_1+\Sigma_2)^{-1}(m_2-m_1)\cdot(m_2-m_1)},\\
    2\mathrm{Kullback}(\Gamma_1\mid\Gamma_2)
    &=\Sigma_2^{-1}(m_1-m_2)\cdot(m_1-m_2)+\mathrm{Tr}(\Sigma_2^{-1}\Sigma_1-\mathrm{I}_n)+\log\det(\Sigma_2\Sigma_1^{-1}),\\
    \chi^2(\Gamma_1\mid\Gamma_2)
    &=-1+
      \frac{\det(\Sigma_2)}{\sqrt{\det(\Sigma_1)\det(2\Sigma_2-\Sigma_1)}}%
      \mathrm{e}^{\frac{1}{2}(\Sigma_2^{-1}+(2\Sigma_2\Sigma_1^{-1}\Sigma_2-\Sigma_2)^{-1})(m_2-m_1)\cdot(m_2-m_1)},\\
    \mathrm{Fisher}(\Gamma_1\mid\Gamma_2)
    &=|\Sigma_2^{-1}(m_1-m_2)|^2+\mathrm{Tr}(\Sigma_2^{-2}\Sigma_1-2\Sigma_2^{-1}+\Sigma_1^{-1})\\
    \mathrm{Wasserstein}^2(\Gamma_1,\Gamma_2)
    &=|m_1-m_2|^2+\mathrm{Tr}\Bigr(\Sigma_1+\Sigma_2-2\sqrt{\sqrt{\Sigma_1}\Sigma_2\sqrt{\Sigma_1}}\Bigr),
  \end{align*}
  where the formula for $\chi^2(\Gamma_1\mid\Gamma_2)$ holds if
  $2\Sigma_2>\Sigma_1$, and $\chi^2(\Gamma_1\mid\Gamma_2)=+\infty$ otherwise.
  Moreover the formulas for Fisher and Wasserstein rewrite, if $\Sigma_1$ and
  $\Sigma_2$ commute, $\Sigma_1\Sigma_2=\Sigma_2\Sigma_1$, to
    \begin{align*}
      \mathrm{Fisher}(\Gamma_1\mid\Gamma_2)
      &=|\Sigma_2^{-1}(m_1-m_2)|^2+\mathrm{Tr}(\Sigma_2^{-2}(\Sigma_2-\Sigma_1)^2\Sigma_1^{-1})\\
      \mathrm{Wasserstein}^2(\Gamma_1,\Gamma_2)
      &=|m_1-m_2|^2+\mathrm{Tr}((\sqrt{\Sigma_1}-\sqrt{\Sigma_2})^2).
    \end{align*}
\end{lemma}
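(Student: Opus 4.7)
The plan is to derive each formula by direct Gaussian computation, using the explicit density
\[
  f_i(x) = (2\pi)^{-n/2} \det(\Sigma_i)^{-1/2} \exp\Bigr(-\tfrac{1}{2}\Sigma_i^{-1}(x-m_i)\cdot(x-m_i)\Bigr),\qquad i\in\{1,2\},
\]
together with the standard Gaussian integral
\[
  \int_{\mathbb{R}^n}\exp\Bigr(-\tfrac{1}{2}A x\cdot x + b\cdot x\Bigr)\mathrm{d}x = (2\pi)^{n/2}\det(A)^{-1/2}\exp\Bigr(\tfrac{1}{2}A^{-1}b\cdot b\Bigr)
\]
valid for any symmetric positive definite $A$. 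I will treat the five formulas one by one, then comment on the commuting case.

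For \emph{Hellinger}, I would write $\sqrt{f_1f_2}$ as a constant times $\exp(-\tfrac14 \Sigma_1^{-1}(x-m_1)\cdot(x-m_1) - \tfrac14 \Sigma_2^{-1}(x-m_2)\cdot(x-m_2))$, complete the square to exhibit an exponent of the form $-\tfrac12 A(x-c)\cdot(x-c) + R$ with $A = \tfrac12(\Sigma_1^{-1}+\Sigma_2^{-1})$ and residual $R$ computable by grouping the $m_1,m_2$ cross terms; the Gaussian integral yields the prefactor in $\det(\frac{\Sigma_1+\Sigma_2}{2})^{-1/2}$ (after using $A^{-1} = 2(\Sigma_1^{-1}+\Sigma_2^{-1})^{-1}$) and the residual exponential in $-\tfrac14(\Sigma_1+\Sigma_2)^{-1}(m_2-m_1)\cdot(m_2-m_1)$. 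For \emph{Kullback}, I would expand $\log(f_1/f_2)$ as an affine-quadratic function of $x$, then integrate against $f_1$, using $\mathbb{E}_{\Gamma_1}[(x-m_2)\cdot\Sigma_2^{-1}(x-m_2)] = \mathrm{Tr}(\Sigma_2^{-1}\Sigma_1) + \Sigma_2^{-1}(m_1-m_2)\cdot(m_1-m_2)$ together with $\mathbb{E}_{\Gamma_1}[(x-m_1)\cdot\Sigma_1^{-1}(x-m_1)] = n$. For $\chi^2$, I would compute $\int f_1^2/f_2$: the exponent becomes $-\tfrac12(2\Sigma_1^{-1}-\Sigma_2^{-1})(x-\cdot)\cdot(x-\cdot)$ plus residual, and the Gaussian integral converges precisely when $2\Sigma_1^{-1}-\Sigma_2^{-1} > 0$, i.e.\ $2\Sigma_2 > \Sigma_1$; collecting determinants and completing the square produces the asserted formula.

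For \emph{Fisher}, I note that $\log(f_1/f_2)$ is affine-quadratic in $x$, so
\[
  \nabla\log(f_1/f_2)(x) = -\Sigma_1^{-1}(x-m_1) + \Sigma_2^{-1}(x-m_2).
\]
Expanding $|\nabla\log(f_1/f_2)|^2$, taking expectation under $\Gamma_1$ and using $\mathbb{E}_{\Gamma_1}[(x-m_1)(x-m_1)^\top] = \Sigma_1$ together with $\mathbb{E}_{\Gamma_1}[x-m_1] = 0$, the cross terms reduce to $\mathrm{Tr}(-2\Sigma_2^{-1})$ after simplification, and the quadratic-in-mean term collects to $|\Sigma_2^{-1}(m_1-m_2)|^2$, yielding the stated identity. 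For \emph{Wasserstein}, I would invoke the classical Bures--Wasserstein formula for Gaussians, whose proof I would sketch by reducing to centered Gaussians (the translation part gives $|m_1-m_2|^2$ since the optimal coupling respects the mean) and then noting that the optimal coupling between $\mathcal{N}(0,\Sigma_1)$ and $\mathcal{N}(0,\Sigma_2)$ is the one induced by the linear map $T = \Sigma_1^{-1/2}(\Sigma_1^{1/2}\Sigma_2\Sigma_1^{1/2})^{1/2}\Sigma_1^{-1/2}$, which is the Brenier map (gradient of a convex quadratic); the cost of this coupling equals $\mathrm{Tr}(\Sigma_1+\Sigma_2 - 2(\Sigma_1^{1/2}\Sigma_2\Sigma_1^{1/2})^{1/2})$, and optimality follows e.g.\ from Brenier's theorem.

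For the commuting case $\Sigma_1\Sigma_2 = \Sigma_2\Sigma_1$, I would simultaneously diagonalize $\Sigma_1$ and $\Sigma_2$ and observe that both $\sqrt{\Sigma_1^{1/2}\Sigma_2\Sigma_1^{1/2}}$ equals $\sqrt{\Sigma_1}\sqrt{\Sigma_2}$ and that $\Sigma_2^{-2}\Sigma_1 - 2\Sigma_2^{-1} + \Sigma_1^{-1}$ factors as $\Sigma_2^{-2}(\Sigma_2-\Sigma_1)^2\Sigma_1^{-1}$; these identifications are immediate on the common eigenbasis and upgrade back to matrix identities by density. The main obstacle I foresee is purely bookkeeping: keeping track of the square roots, the prefactor determinants, and the matrix-valued completions of the square without arithmetic error, especially for $\chi^2$ where the condition $2\Sigma_2 > \Sigma_1$ must be verified for integrability and the residual Gaussian integral involves the matrix $\Sigma_2^{-1} + (2\Sigma_2\Sigma_1^{-1}\Sigma_2 - \Sigma_2)^{-1}$ which requires a careful identification after completing the square.
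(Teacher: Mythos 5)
Your proposal is correct, and on the one formula the paper actually proves in detail --- Fisher --- your argument is identical to theirs: write $\nabla\log(f_1/f_2)(x)=-\Sigma_1^{-1}(x-m_1)+\Sigma_2^{-1}(x-m_2)$, expand the square, and evaluate the three expectations under $\Gamma_1$ via $\mathbb{E}[A(X-a)\cdot B(X-b)]=\mathrm{Tr}(A\Sigma B)+A(m-a)\cdot B(m-b)$, which produces $\mathrm{Tr}(\Sigma_1^{-1})-2\mathrm{Tr}(\Sigma_2^{-1})+\mathrm{Tr}(\Sigma_2^{-2}\Sigma_1)+|\Sigma_2^{-1}(m_1-m_2)|^2$. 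Where you diverge is in scope rather than method: the paper simply cites references for Hellinger and Kullback, states that $\chi^2$ "follows easily from a direct computation," and outsources Wasserstein (which it flags as "far more subtle") to the Givens--Shortt paper, whereas you supply the standard self-contained computations for all of them. Your completions of the square for Hellinger and $\chi^2$ check out, including the integrability condition $2\Sigma_1^{-1}-\Sigma_2^{-1}>0\Leftrightarrow 2\Sigma_2>\Sigma_1$ and the determinant bookkeeping via $\tfrac12(\Sigma_1^{-1}+\Sigma_2^{-1})=\tfrac12\Sigma_1^{-1}(\Sigma_1+\Sigma_2)\Sigma_2^{-1}$. Your Brenier-map sketch for Wasserstein is a legitimate modern proof of the Bures formula (one should verify $T\Sigma_1T=\Sigma_2$ and that $\mathrm{Tr}(T\Sigma_1)=\mathrm{Tr}((\Sigma_1^{1/2}\Sigma_2\Sigma_1^{1/2})^{1/2})$, both of which are one-line computations), and is arguably more informative than the paper's bare citation. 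Two cosmetic remarks: in the commuting case no "density" argument is needed, since commuting symmetric positive definite matrices are simultaneously diagonalizable outright; and throughout one must of course read "covariance matrices" as positive definite, since the formulas involve $\Sigma_i^{-1}$.
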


Regarding the total variation distance, there is no general simple formula for
Gaussian laws, but we can use for instance the comparisons with
$\mathrm{Kullback}$ and $\mathrm{Hellinger}$ (Lemma \ref{le:distineqs}), see
\cite{devroye} for a discussion.

\begin{proof}[Proof of Lemma \ref{le:gauss}]
  We refer to \cite[p.~47~and~p.~51]{MR2183173} for Kullback and Hellinger,
  and to \cite{MR752258} for Wasserstein, a far more subtle case. The formula
  for $\chi^2(\Gamma_1\mid\Gamma_2)$ follows easily from a direct computation.
  We have not found in the literature a formula for Fisher. Let us give it
  here for the sake of completeness. Using
  $\mathbb{E}[X_iX_j]=\Sigma_{ij}+m_im_j$ when $X\sim\mathcal{N}(m,\Sigma)$ we
  get, for all $n\times n$ symmetric matrices $A$ and $B$
  \begin{equation*}
    \mathbb{E}[AX\cdot BX]
    =\sum_{i,j,k=1}^nA_{ij}B_{ik}\mathbb{E}[X_jX_k]
    =\sum_{i,j,k=1}^nA_{ij}B_{ik}(\Sigma_{jk}+m_jm_k)
    =\mathrm{Trace}(A\Sigma B)+Am\cdot Bm
  \end{equation*}
  and thus for all $n$-dimensional vectors $a$ and $b$, 
  \begin{align*}
    \mathbb{E}[A(X-a)\cdot B(X-b)]
    &=\mathbb{E}[AX\cdot BX]+A(m-a)\cdot B(m-b)-Am\cdot Bm\\
    &=\mathrm{Trace}(A\Sigma B)+A(m-a)\cdot B(m-b).
  \end{align*}
  Now, using the notation $q_i(x)=\Sigma_i^{-1}(x-m_i)\cdot(x-m_i)$ and
  $|\Sigma_i|=\det(\Sigma_i)$,
  \begin{align*}
    \mathrm{Fisher}(\Gamma_1\mid\Gamma_2)
    &=4\frac{\sqrt{|\Sigma_2|}}{\sqrt{|\Sigma_1|}}
      \int\Bigr|\nabla\mathrm{e}^{-\frac{q_1(x)}{4}+\frac{q_2(x)}{4}}\Bigr|^2\frac{\mathrm{e}^{-\frac{q_2(x)}{2}}}{\sqrt{2\pi|\Sigma_2|}}\mathrm{d}x\\
    &=\int|\Sigma_2^{-1}(x-m_2)-\Sigma_1^{-1}(x-m_1)|^2
      \frac{\mathrm{e}^{-\frac{q_1(x)}{2}}}{\sqrt{2\pi|\Sigma_1|}}\mathrm{d}x\\
    &=\int(|\Sigma_2^{-1}(x-m_2)|^2-2\Sigma_2^{-1}(x-m_2)\cdot\Sigma_1^{-1}(x-m_1)+|\Sigma_1^{-1}(x-m_1)|^2)
      \frac{\mathrm{e}^{-\frac{q_1(x)}{2}}}{\sqrt{2\pi|\Sigma_1|}}\mathrm{d}x\\  
    &=\mathrm{Trace}(\Sigma_2^{-1}\Sigma_1\Sigma_2^{-1})+|\Sigma_2^{-1}(m_1-m_2)|^2-2\mathrm{Trace}(\Sigma_2^{-1})+\mathrm{Trace}(\Sigma_1^{-1})\\
    &=\mathrm{Trace}(\Sigma_2^{-2}\Sigma_1-2\Sigma_2^{-1}+\Sigma_1^{-1})+|\Sigma_2^{-1}(m_1-m_2)|^2.    
  \end{align*}
  The formula when $\Sigma_1\Sigma_2=\Sigma_2\Sigma_1$ follows immediately. 
\end{proof}

\section{Convexity and its dynamical consequences}
\label{ap:convexity}

We gather useful dynamical consequences of convexity. We start with
functional inequalities.

\begin{lemma}[Logarithmic Sobolev inequality]\label{le:lsi}
  Let $P_n^\beta$ be the invariant law of the DOU process
  solving \eqref{eq:DOU}. Then, for all law $\nu$ on $\mathbb{R}^n$, we have
  \[
    \mathrm{Kullback}(\nu\mid P_n^\beta)
    \leq\frac{1}{2n}\mathrm{Fisher}(\nu\mid P_n^\beta).
  \]
  Moreover the constant $\frac{1}{2n}$ is optimal.\\
  Furthermore, finite equality is achieved if and only if
  $\mathrm{d}\nu/\mathrm{d}P_n^\beta$ is of the form
  $\mathrm{e}^{\lambda(x_1+\cdots+x_n)}$, $\lambda\in\mathbb{R}$.\\
\end{lemma}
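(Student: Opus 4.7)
The strategy is to deduce the inequality from the Bakry--\'Emery $\Gamma_2$-criterion applied to the log-potential $E$, and then to check sharpness and identify extremizers through an explicit computation resting on Theorem \ref{th:DOUOU}.

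The first step is to show that $E$ is $n$-strongly convex on the support of $P_n^\beta$, namely $\mathbb{R}^n$ for $\beta = 0$ and $D_n$ for $\beta \geq 1$. Writing $E(x) = \frac{n}{2}|x|^2 + \beta W(x)$ with $W(x) = \sum_{i>j}(-\log(x_i - x_j))$, the Hessian of the confinement equals $n\, I_n$. The Hessian of $W$ on $D_n$ is the weighted graph Laplacian $H(x)$ with entries $H_{ii}(x) = \sum_{j\neq i}(x_i - x_j)^{-2}$ and $H_{ij}(x) = -(x_i-x_j)^{-2}$ for $i\neq j$, which is positive semidefinite with kernel exactly $\mathbb{R}\mathbf{1}$ where $\mathbf{1} = (1,\ldots,1)$. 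Hence $\nabla^2 E \geq n\, I_n$ on $D_n$. The Bakry--\'Emery criterion applied to the generator $\G = \frac{1}{n}(\Delta - \nabla E \cdot \nabla)$, whose carr\'e du champ reads $\Gamma(f) = \frac{1}{n}|\nabla f|^2$, then yields $\Gamma_2 \geq \Gamma$, which via the standard entropy--dissipation interpolation translates into $\mathrm{Kullback}(\nu \mid P_n^\beta) \leq \frac{1}{2n}\,\mathrm{Fisher}(\nu\mid P_n^\beta)$. For $\beta \geq 1$, the density of $P_n^\beta$ vanishes on $\partial D_n$, which suppresses boundary terms in the required integration by parts.

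Optimality and the extremal family are obtained by testing the inequality on $\nu_\lambda$ defined by $\mathrm{d}\nu_\lambda/\mathrm{d}P_n^\beta(x) = \mathrm{e}^{\lambda \pi(x)}/Z_\lambda$ with $\pi(x) = x_1 + \cdots + x_n$ and $\lambda \in \mathbb{R}$. By Theorem \ref{th:DOUOU}, $\pi$ has law $\mathcal{N}(0,1)$ under $P_n^\beta$; hence $Z_\lambda = \mathrm{e}^{\lambda^2/2}$ and $\mathbb{E}_{\nu_\lambda}[\pi] = \lambda$, giving $\mathrm{Kullback}(\nu_\lambda\mid P_n^\beta) = \lambda \mathbb{E}_{\nu_\lambda}[\pi] - \log Z_\lambda = \lambda^2/2$. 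Moreover $\nabla \log(\mathrm{d}\nu_\lambda/\mathrm{d}P_n^\beta) = \lambda\mathbf{1}$ with $|\mathbf{1}|^2 = n$, so $\mathrm{Fisher}(\nu_\lambda\mid P_n^\beta) = n\lambda^2$. Thus $\mathrm{Kullback}(\nu_\lambda\mid P_n^\beta) = \frac{1}{2n}\mathrm{Fisher}(\nu_\lambda\mid P_n^\beta)$, which simultaneously certifies that the constant $\frac{1}{2n}$ is optimal and that the asserted family realizes equality.

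The main obstacle is the converse rigidity, namely showing that any $\nu$ with finite Kullback saturating the LSI must have density proportional to $\mathrm{e}^{\lambda \pi}$. I would track the equality cases through the Bakry--\'Emery proof: for a smooth $f = \sqrt{\mathrm{d}\nu/\mathrm{d}P_n^\beta}$, saturation forces pointwise $\Gamma_2(f) = \Gamma(f)$, which in turn requires simultaneously $\|\nabla^2 f\|_{\mathrm{HS}}^2 = 0$ and $H(x)\nabla f(x) \cdot \nabla f(x) = 0$ almost everywhere. The first condition forces $f$ to be affine, while the second forces $\nabla f(x) \in \ker H(x) = \mathbb{R}\mathbf{1}$; together they yield $f(x) = a + b\pi(x)$ and therefore $\mathrm{d}\nu/\mathrm{d}P_n^\beta \propto \mathrm{e}^{\lambda \pi}$ for some $\lambda$. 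Extending this rigidity beyond smooth densities requires a regularization argument; alternatively, one may invoke the factorization approach developed in \cite{zbMATH07238061}, which reduces the problem to the one-dimensional Gaussian LSI along the linear form $\pi$, whose extremizers are the well-known exponentials of affine functions.
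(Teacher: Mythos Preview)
Your derivation of the inequality via the Bakry--\'Emery criterion from the $n$-strong convexity of $E$ on $D_n$ is correct and is exactly the route the paper indicates (log-concavity of $P_n^\beta$ relative to $\mathcal{N}(0,\tfrac{1}{n}I_n)$, plus a regularization to handle the boundary of $D_n$; the paper defers the details to \cite{MR3699468,zbMATH07238061}). Your verification of optimality by testing on $\nu_\lambda$ with $\mathrm{d}\nu_\lambda/\mathrm{d}P_n^\beta \propto \mathrm{e}^{\lambda\pi}$, using Theorem~\ref{th:DOUOU} to identify the law of $\pi$ under $P_n^\beta$ as $\mathcal{N}(0,1)$, is clean and correct.

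The rigidity sketch, however, contains a genuine error. In the entropy--dissipation proof, the quantity whose $\Gamma_2$ saturates the curvature bound along the flow is $h=\log(\mathrm{d}\nu/\mathrm{d}P_n^\beta)$, not $f=\sqrt{\mathrm{d}\nu/\mathrm{d}P_n^\beta}$: one has $\partial_t\,\mathrm{Fisher}(\mu_t\mid P_n^\beta)=-2n\int g_t\,\Gamma_2(\log g_t)\,\mathrm{d}P_n^\beta$, and equality in the LSI forces $\Gamma_2(\log g)=\Gamma(\log g)$ a.e., hence $\nabla^2(\log g)=0$ and $\nabla(\log g)\in\ker H(x)=\mathbb{R}\mathbf{1}$, giving $\log g$ affine in $\pi$ and thus $\mathrm{d}\nu/\mathrm{d}P_n^\beta\propto\mathrm{e}^{\lambda\pi}$. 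With your choice $f=\sqrt{g}$, the conditions ``$\nabla^2 f=0$ and $\nabla f\in\mathbb{R}\mathbf{1}$'' yield $f(x)=a+b\pi(x)$, from which $\mathrm{d}\nu/\mathrm{d}P_n^\beta=(a+b\pi)^2$ --- a polynomial, not an exponential --- so the final implication ``therefore $\mathrm{d}\nu/\mathrm{d}P_n^\beta\propto\mathrm{e}^{\lambda\pi}$'' does not follow. Indeed, for the actual extremizers $\mathrm{e}^{\lambda\pi}$ one has $f=\mathrm{e}^{\lambda\pi/2}/\sqrt{Z_\lambda}$ with $\nabla^2 f=\tfrac{\lambda^2}{4}\mathbf{1}\mathbf{1}^\top f\neq 0$, so they do \emph{not} satisfy $\Gamma_2(f)=\Gamma(f)$. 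Replacing $f$ by $h=\log g$ throughout fixes the argument; your fallback reference to the factorization approach in \cite{zbMATH07238061} is the right alternative.
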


Linearizing the log-Sobolev inequality above with
$\mathrm{d}\nu/\mathrm{d}P_n^\beta=1+\varepsilon f$ gives the Poincaré
inequality
\begin{equation}\label{eq:PI}
  \mathrm{Var}_{P^\beta_n}(f)\leq-\int f\G f\mathrm{d}P^\beta_n.
\end{equation}
It can be extended by truncation and regularization from the case where $f$
is smooth and compactly supported to the case where $f$ is in the Sobolev
space $H^1(P^\beta_n)$. Finite equality is achieved when $f$ is an
eigenfunction associated to the eigenvalue $-1$ of $\G$, namely
$f(x)=a(x_1+\cdots+x_n)+b$, $a,b\in\mathbb{R}$, hence the other name
\emph{spectral gap inequality}. It rewrites in terms of $\chi^2$ divergence
as
\begin{equation}\label{eq:PIchi2}
  \chi^2(\nu\mid P_n^\beta)\leq\frac{1}{n}\int\Bigr|\nabla\frac{\mathrm{d}\nu}{\mathrm{d}P^\beta_n}\Bigr|^2\,\mathrm{d}P_n^\beta.
\end{equation}
The right-hand side plays for the $\chi^2$ divergence the role played by
Fisher for Kullback. 

We refer to \cite{MR3699468,zbMATH07238061} for a proof of Lemma \ref{le:lsi}.
This logarithmic Sobolev inequality is a consequence of the log-concavity of
$P_n^\beta$ with respect to $\mathcal{N}(0,\frac{1}{n}\mathrm{I}_n)$. A slightly delicate aspect
lies in the presence of the restriction to $D_n$, which can be circumvented by
using a regularization procedure.

There are many other functional inequalities which are a consequence of this
log-concavity, for instance the Talagrand transportation inequality that
states that when $\nu$ has finite second moment,
\[
  \mathrm{Wasserstein}^2(\nu,P_n^\beta)
  \leq\frac{1}{n}\mathrm{Kullback}(\nu\mid P_n^\beta)
\]
and the HWI inequality\footnote{Here ``H'' is the capital $\eta$ used by
  Boltzmann for entropy, ``W'' is for Wasserstein, ``I'' is for Fisher
  information.} that states that when $\nu$ has finite second moment,
\[
  \mathrm{Kullback}(\nu\mid P_n^\beta)
  \leq\mathrm{Wasserstein}(\nu, P_n^\beta)\sqrt{\mathrm{Fisher}(\nu\mid P_n^\beta)}
  -\frac{n}{2}\mathrm{Wasserstein}^2(\nu\mid P_n^\beta),
\]
and we refer to \cite{zbMATH05306371} 
for this couple of functional inequalities, that we do not use here.

\begin{lemma}[Sub-exponential convergence to equilibrium]\label{le:expdec}
  Let ${(X^n_t)}_{t\geq0}$ be the DOU process solution
  of \eqref{eq:DOU} with $\beta=0$ or $\beta\geq1$, and let $P_n^\beta$ be its
  invariant law. Then for all $t\geq0$, we have the sub-exponential
  convergences
  \begin{align*}
    \chi^2(\mathrm{Law}(X^n_t)\mid P_n^\beta)
    &\leq\mathrm{e}^{-2t}\chi^2(\mathrm{Law}(X^n_0)\mid P_n^\beta),\\
    \mathrm{Kullback}(\mathrm{Law}(X^n_t)\mid P_n^\beta)
    &\leq\mathrm{e}^{-2t}\mathrm{Kullback}(\mathrm{Law}(X^n_0)\mid P_n^\beta),\\
    \mathrm{Fisher}(\mathrm{Law}(X^n_t)\mid P_n^\beta)
    &\leq\mathrm{e}^{-2t}\mathrm{Fisher}(\mathrm{Law}(X^n_0)\mid P_n^\beta),\\
    \mathrm{Wasserstein}^2(\mathrm{Law}(X^n_t),P_n^\beta)
    &\leq\mathrm{e}^{-2t}\mathrm{Wasserstein}^2(\mathrm{Law}(X^n_0),P_n^\beta).
  \end{align*}
\end{lemma}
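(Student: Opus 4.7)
\textbf{Proof plan for Lemma \ref{le:expdec}.}

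The four decays all rest on the uniform convexity bound $\mathrm{Hess}(E)\ge n\,I_n$ on $D_n$. Indeed, $V(x)=x^2/2$ contributes exactly $n\,I_n$ coordinate-wise, and the logarithmic interaction term is convex on $D_n$, hence its Hessian is positive semi-definite. This single bound is the analytic engine behind the spectral gap of $1$, the log-Sobolev constant of $1/(2n)$ (Lemma \ref{le:lsi}), and the synchronous-coupling contraction below.

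\emph{Wasserstein.} I would use synchronous coupling. Let $X$ and $Y$ be two solutions of \eqref{eq:DOU2} driven by the \emph{same} Brownian motion, with $X_0\sim \mathrm{Law}(X_0^n)$ and $Y_0\sim P_n^\beta$ optimally coupled for $\mathrm{Wasserstein}^2$. The Brownian terms cancel in the difference and Itô's formula yields
\[
\frac{\mathrm{d}}{\mathrm{d}t}|X_t-Y_t|^2
= -\frac{2}{n}\bigl(\nabla E(X_t)-\nabla E(Y_t)\bigr)\cdot(X_t-Y_t)
\le -2|X_t-Y_t|^2,
\]
the last inequality by the uniform convexity of $E$. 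Grönwall, taking expectations, and using that $Y_t\sim P_n^\beta$ for all $t$ (invariance) gives the stated Wasserstein bound.

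\emph{$\chi^2$ and Kullback.} By reversibility, the density $h_t := \mathrm{d}\mathrm{Law}(X_t^n)/\mathrm{d}P_n^\beta$ satisfies $\partial_t h_t = \G h_t$. Differentiating $\chi^2 = \mathrm{Var}_{P_n^\beta}(h_t)$ yields $\frac{\mathrm{d}}{\mathrm{d}t}\chi^2(\mathrm{Law}(X_t^n)\mid P_n^\beta) = -2\,\mathcal{E}(h_t,h_t)$, where $\mathcal{E}(f,f) = -\int f\G f\,\mathrm{d}P_n^\beta = \frac{1}{n}\int |\nabla f|^2\,\mathrm{d}P_n^\beta$ is the Dirichlet form. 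The Poincaré inequality \eqref{eq:PIchi2}, which is the spectral gap of the generator, reads exactly $\chi^2(\nu\mid P_n^\beta)\le \mathcal{E}(h,h)$, so Grönwall delivers the $\mathrm{e}^{-2t}$ rate. For Kullback, the analogous de Bruijn identity $\frac{\mathrm{d}}{\mathrm{d}t}\mathrm{Kullback}(\mathrm{Law}(X_t^n)\mid P_n^\beta) = -\frac{1}{n}\mathrm{Fisher}(\mathrm{Law}(X_t^n)\mid P_n^\beta)$ combines with Lemma \ref{le:lsi}, rewritten as $\mathrm{Fisher}\ge 2n\,\mathrm{Kullback}$, to produce the same $\mathrm{e}^{-2t}$ rate.

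\emph{Fisher.} This is obtained by Bakry--Émery. After the time-rescaling $Y_s = X_{s/n}$, the process $Y$ is driven by the generator $\Delta-\nabla E\cdot\nabla$ whose potential $E$ is uniformly convex with constant $n$, so the Bakry--Émery curvature--dimension condition $\mathrm{CD}(n,\infty)$ holds on $D_n$. Undoing the rescaling, the DOU semigroup satisfies the gradient bound $\Gamma(P_t f, P_t f)\le \mathrm{e}^{-2t}P_t\Gamma(f,f)$, which, integrated against $P_n^\beta$ and applied to $f=\log h_t$ (through a standard smooth truncation), gives the desired decay of Fisher at rate $\mathrm{e}^{-2t}$.

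\emph{Main obstacle.} The computations above are formal because $\G$ is singular at $\partial D_n$ for $\beta\ge 1$, and $\log h_t$ may explode there. The rigorous justification requires one to approximate $h_0$ by smooth, compactly-supported-in-$D_n$ densities (exploiting the fact, recalled after \eqref{eq:DOU}, that the process almost surely lies in $D_n$ for $t>0$), to run the dissipation identities on this regularized object, and to pass to the limit using lower semi-continuity of each functional. For Kullback, one must additionally handle initial densities of infinite entropy by a truncation and monotone-convergence argument; the Wasserstein case is immune to this, and the $\chi^2$ and Fisher cases are trivially true when the right-hand side is infinite.
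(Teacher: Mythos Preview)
Your proposal is correct in structure and follows essentially the same route as the paper: synchronous (parallel) coupling for Wasserstein, the de~Bruijn identity plus log-Sobolev for Kullback, variance dissipation plus Poincar\'e for $\chi^2$, and Bakry--\'Emery curvature for Fisher, each closed by Gr\"onwall; the paper likewise flags the boundary/regularization issues as the main technicality and leaves the argument at the same formal level.

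One genuine slip, however, in your Fisher step: the gradient commutation $\Gamma(P_t f)\le \mathrm{e}^{-2t}P_t\Gamma(f)$, integrated against $P_n^\beta$, gives decay of the \emph{Dirichlet form} $\int\Gamma(P_t f)\,\mathrm{d}P_n^\beta$, not of the Fisher information, and ``applied to $f=\log h_t$'' does not parse since $P_t(\log h_0)\neq\log h_t$ and $h_t$ itself depends on $t$. The paper's route (and the standard one) is to differentiate Fisher directly to obtain $\partial_t\mathrm{Fisher}(\mu_t\mid\mu)=-2n\int h_t\,\Gamma_{\!2}(\log h_t)\,\mathrm{d}\mu$ and then invoke $\Gamma_{\!2}\ge\Gamma$, which is equivalent to your gradient bound but deployed at the level of the dissipation identity rather than through a semigroup commutation on $\log h_t$.
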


Recall that when $\beta>0$ the initial condition $X^n_0$ is always taken in
$D_n$.

For each inequality, if the right-hand side is infinite then the inequality is
trivially satisfied. This is in particular the case for $\mathrm{Kullback}$ and
$\mathrm{Fisher}$ when $\mathrm{Law}(X^n_0)$ is not absolutely continuous with
respect to the Lebesgue measure, and for Wasserstein when
$\mathrm{Law}(X^n_0)$ has infinite second moment.

\begin{proof}[Elements of proof of Lemma \ref{le:expdec}]
  The idea is that an exponential decay for $\mathrm{Kullback}$, $\chi^2$,
  $\mathrm{Fisher}$, and $\mathrm{Wasserstein}$ can be established by taking
  the derivative, using a functional inequality, and using the Grönwall lemma.
  More precisely, for $\mathrm{Kullback}$ it is a log-Sobolev inequality, for
  $\chi^2$ a Poincaré inequality, for $\mathrm{Wasserstein}$ a transportation
  type inequality, and for $\mathrm{Fisher}$ a Bakry\,--\,Émery $\Gamma_2$
  inequality, see for instance
  \cite{zbMATH01633816,zbMATH06175511,zbMATH05306371}. It is a rather standard
  piece of probabilistic functional analysis, related to the log-concavity of
  $P_n^\beta$. We recall the crucial steps for the reader convenience. Let us
  set $\mu_t=\mathrm{Law}(X^n_t)$ and $\mu=P_n^\beta$. For $t>0$ the density
  $p_t=\mathrm{d}\mu_t/\mathrm{d}\mu$ exists and solves the evolution equation
  $\partial_tp_t=\G p_t$ where $\G$ is as in \eqref{eq:G}. We have the
  integration by parts
  \[
    \int f\G g\mathrm{d}\mu
    =\int g\G f\mathrm{d}\mu
    =-\frac{1}{n}\int\nabla f\cdot\nabla g\mathrm{d}\mu.
  \]
  For $\mathrm{Kullback}$, we find using these tools, for all $t>0$, denoting
  $\Phi(u):=u\log(u)$,
    \begin{multline}\label{eq:debruijn}
      \partial_t\mathrm{Kullback}(\mu_t\mid\mu)
      =\int\Phi'(f_t)\G f_t\mathrm{d}\mu
      =-\frac{1}{n}\int\Phi''(f_t)|\nabla f_t|^2\mathrm{d}\mu\\
      =-\frac{1}{n}\mathrm{Fisher}(\mu_t\mid\mu)
      \leq-2\mathrm{Kullback}(\mu_t\mid\mu),
    \end{multline}
  where the inequality comes from the logarithmic Sobolev inequality of Lemma
  \ref{le:lsi}. It remains to use the Grönwall lemma to get the
  exponential decay of $\mathrm{Kullback}$.

  The derivation of the exponential decay of the Fisher divergence follows the
  same lines by differentiating again with respect to time. Indeed, after a
  sequence of differential computations and integration by parts, we find, see
  for instance \cite[Ch.~5]{zbMATH01633816}, \cite{zbMATH06175511}, or
  \cite{zbMATH05306371},
  \begin{equation}\label{eq:dtF}
    \partial_t\mathrm{Fisher}(\mu_t\mid\mu)
    =-2n\int f_t\Gamma_{\!2}(\log(f_t))\mathrm{d}\mu,
  \end{equation}
  where $\Gamma_{\!2}(f):=\frac{1}{n^2}f''^2+\frac{1}{n}V''f'^2$ is the
  Bakry\,--\,Émery ``Gamma-two'' operator of the dynamics. Now using the
  convexity of $V$, we get, by the Grönwall lemma, for all $t>0$,
  \begin{equation}\label{Eq:DecayFisher}
  	    \partial_t\mathrm{Fisher}(\mu_t\mid\mu)
    \leq-2\mathrm{Fisher}(\mu_t\mid\mu).
  \end{equation}
  This can be used to prove the log-Sobolev inequality, see
  \cite[Ch.~5]{zbMATH01633816}, \cite{zbMATH06175511}, and
  \cite{zbMATH05306371}. This differential approach goes back at least to
  Boltzmann (statistical physics) and Stam (information theory) and was
  notably extensively developed later on by Bakry, Ledoux, Villani and their
  followers.

  For the Wasserstein distance, we proceed by coupling. Indeed, since the
  diffusion coefficient is constant in space, we can simply use a
  \emph{parallel coupling}. Namely, let ${(X'_t)}_{t\geq0}$ be the process
  started from another possibly random initial condition $X'_0$, and
  satisfying to the same stochastic differential equation, with the same
  BM. We get
  \[
    \mathrm{d}(X_t-X'_t)=-\frac1{n}(\nabla E(X_t)-\nabla E(X'_t))\mathrm{d}t,
  \]
  hence
  \begin{equation}\label{eq:dtW}
    \mathrm{d}(X_t-X'_t)\cdot(X_t-X'_t)
    =-\frac1n((\nabla E(X_t)-\nabla E(X'_t))\cdot(X_t-X'_t))\mathrm{d}t.
  \end{equation}
  Now since $E$ is uniformly convex with $\nabla^2 E\geq n I_n$,
  we get, for all $x,y\in\mathbb{R}^n$,
  \[
    (\nabla E(x)-\nabla E(y))\cdot(x-y)\geq n|x-y|^2,
  \]
which gives
  \[
    \mathrm{d}|X_t-X'_t|^2
    \leq-2|X_t-X'_t|^2\mathrm{d}t
  \]
  and by the Grönwall lemma,
  \[
    |X_t-X'_t|^2\leq\mathrm{e}^{-2 t}|X_0-X'_0|^2.
  \]
 It follows that
  \[
    \mathrm{Wasserstein}^2(
    \mathrm{Law}(X_t),\mathrm{Law}(X_t'))
    \leq\mathrm{e}^{-2 t}
    \mathbb{E}[|X_0-X'_0|^2].
  \]
  By taking the infimum over all couplings of $X_0$ and $X_0'$ we get
  \[
    \mathrm{Wasserstein}^2(
    \mathrm{Law}(X_t),\mathrm{Law}(X_t'))
    \leq\mathrm{e}^{-2 t}
    \mathrm{Wasserstein}^2(\mathrm{Law}(X_0),\mathrm{Law}(X'_0)).
  \]
  Taking $X_0'\sim P_n^\beta$ we get, by invariance, for all $t\geq0$,
  \[
    \mathrm{Wasserstein}^2(
    \mathrm{Law}(X_t),P_n^\beta)
    \leq\mathrm{e}^{-2 t}
    \mathrm{Wasserstein}^2(\mathrm{Law}(X_0),P_n^\beta).
  \]
\end{proof}

\begin{lemma}[Monotonicity]\label{le:monot}
  Let ${(X^n_t)}_{t\geq0}$ be the DOU process \eqref{eq:DOU}, with $\beta=0$ or
  $\beta\geq1$ and invariant law $P^\beta_n$. Then for all
  $\mathrm{dist}\in\{\mathrm{TV},\mathrm{Hellinger},\mathrm{Kullback},\chi^2,\mathrm{Fisher},\mathrm{Wasserstein}\}$,
  the function
  $t\geq0\mapsto\mathrm{dist}(\mathrm{Law}(X^n_t)\mid P^\beta_n)$ is
  non-increasing.
\end{lemma}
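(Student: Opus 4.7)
The proof exploits the Markov semigroup structure combined with the invariance of $P_n^\beta$. For every $t, s \geq 0$, the law $\mathrm{Law}(X^n_{t+s})$ is the image of $\mathrm{Law}(X^n_t)$ by the action of the DOU dynamics over a time lapse $s$, and that action leaves $P_n^\beta$ invariant. Hence it suffices to check that each $\mathrm{dist}$ is contracted by running the dynamics for time $s$, starting from an arbitrary probability measure on the state space.

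For $\mathrm{Kullback}$, $\chi^2$, $\mathrm{Fisher}$, and $\mathrm{Wasserstein}$, the contraction is an immediate consequence of Lemma \ref{le:expdec} applied to the DOU process restarted at time $t$: evaluating that exponential decay bound over an additional time lapse $s$ gives
\[
  \mathrm{dist}(\mathrm{Law}(X^n_{t+s}) \mid P_n^\beta)
  \leq e^{-2s}\, \mathrm{dist}(\mathrm{Law}(X^n_t) \mid P_n^\beta)
  \leq \mathrm{dist}(\mathrm{Law}(X^n_t) \mid P_n^\beta),
\]
which is precisely the desired monotonicity. No new ingredient is needed beyond the Markov property used to ``restart'' the process.

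For $\mathrm{TV}$ and $\mathrm{Hellinger}$, a separate argument is required since Lemma \ref{le:expdec} does not cover these distances. Both are $f$-divergences with convex $f$ (respectively $f(u) = |u-1|/2$ and $f(u) = (\sqrt{u}-1)^2/2$), and therefore satisfy the data-processing inequality: for any Markov kernel $K$ and any probability measures $\mu, \nu$, one has $D_f(\mu K \mid \nu K) \leq D_f(\mu \mid \nu)$. This follows from the joint convexity of $(\mu, \nu) \mapsto D_f(\mu \mid \nu)$ together with Jensen's inequality applied against $K(x, \cdot)$, in the same spirit as the variational proof of Lemma \ref{le:contraction}. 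Specializing $K$ to the DOU transition kernel between times $t$ and $t+s$ and $\nu$ to $P_n^\beta$ yields the monotonicity. For $\mathrm{TV}$ one can also proceed by a direct synchronous coupling, starting from an optimal coupling of $\mathrm{Law}(X^n_t)$ and $P_n^\beta$ in the sense of \eqref{eq:TVW} and driving the two DOU copies by the same Brownian motion: by the convexity-based contraction from the proof of Lemma \ref{le:expdec}, coincidence is an absorbing event for the coupled process, so the probability of mismatch at time $t+s$ is dominated by the initial one.

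No step is expected to pose a genuine obstacle, all necessary tools being already deployed in the paper. The mildest subtlety is that $\mathrm{Fisher}$ is not an $f$-divergence, and so falls outside the $\mathrm{TV}/\mathrm{Hellinger}$ argument; it must be grouped with the other distances handled via Lemma \ref{le:expdec}, whose Bakry--Émery derivation \eqref{eq:dtF} with $\Gamma_2 \geq 0$ ultimately relies on the convexity of $E$ on $D_n$ (or on $\mathbb{R}^n$ when $\beta = 0$) and thus does not require any further work.
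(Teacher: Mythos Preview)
Your proposal is correct and essentially matches the paper's proof. The paper treats $\mathrm{TV}$, $\mathrm{Hellinger}$, $\mathrm{Kullback}$, $\chi^2$ together via $\Phi$-entropy dissipation (which is exactly the data-processing inequality you invoke), handles $\mathrm{Fisher}$ and $\mathrm{Wasserstein}$ via the differential identities \eqref{eq:dtF} and \eqref{eq:dtW}, and then explicitly notes as an alternative that all the monotonicities ``can also be extracted from the exponential decays of Lemma \ref{le:expdec} thanks to the Markov property'' --- precisely your route for $\mathrm{Kullback}$, $\chi^2$, $\mathrm{Fisher}$, $\mathrm{Wasserstein}$. The only cosmetic difference is the grouping: you put $\mathrm{Kullback}$ and $\chi^2$ with the Lemma \ref{le:expdec} argument rather than with the $f$-divergence argument, but either works.
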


\begin{proof}[Elements of proof of Lemma \ref{le:monot}]
  The monotonicity for
  $\mathrm{TV},\mathrm{Hellinger},\mathrm{Kullback},\chi^2$ comes from the
  Markov nature of the process and the convexity of
  \[
    u\mapsto
    \Phi(u)=
    \begin{cases}
      \frac{1}{2}|u-1| & \text{if $\mathrm{dist}=\mathrm{TV}$}\\
      1-\sqrt{u} & \text{if $\mathrm{dist}=\mathrm{Hellinger}$}\\
      u\log(u) & \text{if $\mathrm{dist}=\mathrm{Kullback}$}\\
      u^2-1 & \text{if $\mathrm{dist}=\chi^2$}
    \end{cases}.
  \]
  This is known as the $\Phi$-entropy
  dissipation of Markov processes, see
  \cite{MR2081075,zbMATH05306371,zbMATH06175511}. This can also be seen from
  \eqref{eq:debruijn}. The monotonicity for $\mathrm{TV}$ follows also from
  the contraction property of the total variation with respect to general
  Markov kernels, see \cite[Ex.~4.2]{zbMATH06813269}.
  
  The monotonicity for $\mathrm{Fisher}$ comes from the identity
  \eqref{eq:dtF} and the convexity of $V$. By \eqref{eq:debruijn} this
  monotonicity is also equivalent to the convexity of $\mathrm{Kullback}$
  along the dynamics. The monotonicity for $\mathrm{Wasserstein}$ can be
  obtained by computing the derivative along the dynamics starting from
  \eqref{eq:dtW}, but this is more subtle due to the variational nature of this
  distance and involves the convexity of $V$, see for instance \cite[Bottom
  of p.~2442 and Lem.~3.2]{MR2964689}.
  
  The monotonicities can also be extracted from the exponential decays of
  Lemma \ref{le:expdec} thanks to the Markov property and the profile
  $\mathrm{e}^{-t}=1-t+o(t)$ of the prefactor in the right hand side.
\end{proof}

The convexity of the interaction $-\log$ as well as the
constant nature of the diffusion coefficient in the evolution equation
\eqref{eq:DOU} allows to use simple ``maximum principle'' type arguments to
prove that the dynamic exhibits a monotonous behavior and an exponential
decay.

\begin{lemma}[Monotonicity and exponential decay]\label{lemma:monotonicity}
  Let ${(X_t^n)}_{t\geq0}$ and ${(Y_t^n)}_{t\geq0}$ be a pair of DOU processes
  solving \eqref{eq:DOU}, $\beta\geq1$, driven by the same Brownian motion
  $(B_t)_{t\geq 0}$ on $\mathbb{R}^n$ and with respective initial conditions
  $X_0^n\in \overline{D}_n$ and $Y_0^n\in \overline{D}_n$. If for all
  $i\in \{1,\ldots,n\}$
  \[
    X_0^{n,i}\leq Y_0^{n,i}
  \]
  then the following properties hold true:
  \begin{itemize}
  \item (Monotonicity property) for all $t\geq0$ and $i \in \{1,\ldots,n\}$,
    \[
      X_t^{n,i}\leq Y_t^{n,i},
    \]
  \item (Decay estimate) for all $t\geq0$,
    \[
      \max_{i\in \{1,\ldots,n\}} (Y_t^{n,i}-X_t^{n,i})
      \leq \max_{i\in \{1,\ldots,n\}} (Y_0^{n,i}-X_0^{n,i})\mathrm{e}^{-t}.
    \]
  \end{itemize}
\end{lemma}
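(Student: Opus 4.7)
The plan is to exploit the fact that $X^n$ and $Y^n$ are driven by the same Brownian motion, so that the martingale parts cancel in the componentwise difference $D_t^i := Y_t^{n,i} - X_t^{n,i}$. Since for $t > 0$ both processes lie in $D_n$, the paths $t \mapsto D_t^i$ are locally absolutely continuous and satisfy pathwise the ODE
\begin{equation*}
  \dot D_t^i = -D_t^i + \frac{\beta}{n}\sum_{j \ne i} \Bigl(\frac{1}{Y_t^{n,i}-Y_t^{n,j}} - \frac{1}{X_t^{n,i}-X_t^{n,j}}\Bigr).
\end{equation*}
Writing $a_{ij}(t) := X_t^{n,i} - X_t^{n,j}$ and $b_{ij}(t) := Y_t^{n,i} - Y_t^{n,j}$, the algebraic identity $b_{ij}^{-1} - a_{ij}^{-1} = (D_t^j - D_t^i)/(a_{ij}b_{ij})$ converts the above into a \emph{linear} non-autonomous system. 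The crucial point is that because both $X^n_t$ and $Y^n_t$ are strictly ordered, $a_{ij}$ and $b_{ij}$ share the same sign, so the coefficients $K_{ij}(t) := \beta/(n\, a_{ij}(t)\, b_{ij}(t))$ are non-negative. We obtain
\begin{equation*}
  \dot D_t^i = -D_t^i + \sum_{j \ne i} K_{ij}(t)\bigl(D_t^j - D_t^i\bigr),
\end{equation*}
a system with non-negative off-diagonal coefficients and a uniform $-1$ diagonal contribution.

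With this structure in hand, both assertions are instances of a maximum principle. For the monotonicity claim, set $m(t) := \min_i D_t^i$, which is Lipschitz in $t$. At any differentiability time, $\dot m(t) = \dot D_t^{i^\ast}$ for some index $i^\ast$ realizing the minimum, and at such an $i^\ast$ the sum $\sum_{j\ne i^\ast}K_{i^\ast j}(D_t^j - D_t^{i^\ast})$ is non-negative, giving $\dot m(t) \ge -m(t)$. Gronwall yields $m(t) \ge m(0)\mathrm{e}^{-t} \ge 0$. Symmetrically, for the decay estimate, set $M(t) := \max_i D_t^i$: at an argmax $i^\ast$, the same sum is non-positive, so $\dot M(t) \le -M(t)$, and Gronwall gives $M(t) \le M(0)\mathrm{e}^{-t}$.

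The only genuine technical obstacle is that when the initial data lie on the boundary $\overline{D}_n \setminus D_n$, the drift is singular at $t=0$ and the pathwise ODE above is only valid on $(0,\infty)$. The standard workaround is to approximate $X_0^n$ and $Y_0^n$ by perturbed initial conditions $X_0^{n,\varepsilon}, Y_0^{n,\varepsilon} \in D_n$ preserving the componentwise ordering $X_0^{n,\varepsilon,i} \le Y_0^{n,\varepsilon,i}$ (for instance shifting the coordinates by $\varepsilon i$), run the coupled SDE with the same Brownian motion, apply the argument above to obtain the two conclusions uniformly in $\varepsilon$, and then let $\varepsilon \to 0$ using the continuity of the DOU flow with respect to the initial condition recalled in the excerpt for $\beta \ge 1$. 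This limiting step is the only part requiring care; everything else is an elementary ODE argument enabled by the convexity of $-\log$, which manifests here through the positivity of the coefficients $K_{ij}$.
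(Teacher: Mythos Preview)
Your proposal is correct and follows essentially the same route as the paper: cancel the martingale parts, rewrite the interaction difference via the identity $b_{ij}^{-1}-a_{ij}^{-1}=(D_t^j-D_t^i)/(a_{ij}b_{ij})$, observe that $a_{ij}b_{ij}>0$ because both processes stay strictly ordered for $t>0$, and then run a maximum/minimum principle on the resulting cooperative linear system to obtain $\dot m\ge -m$ and $\dot M\le -M$.

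The only point of divergence is your treatment of the singularity at $t=0$. You propose to perturb the initial data into $D_n$, preserve the ordering, and pass to the limit using continuity of the flow in the initial condition. The paper instead keeps the original initial data, notes that the pathwise ODE is valid on $(0,\infty)$, derives $M(t)\le \mathrm{e}^{-(t-t_0)}M(t_0)$ and $m(t)\ge \mathrm{e}^{-(t-t_0)}m(t_0)$ for $t\ge t_0>0$, and then sends $t_0\downarrow 0$ using only the continuity of the paths $t\mapsto X_t^n$, $t\mapsto Y_t^n$ at $t=0$. This is lighter: it avoids invoking continuity of the solution map with respect to the initial condition and any approximation scheme. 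Your approach is valid as well, but it imports more than is needed.
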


\begin{proof}[Proof of Lemma \ref{lemma:monotonicity}]
  The difference of $Y_t^n - X_t^n$ satisfies
  \begin{equation}\label{eq:partial t}
      \partial_t (Y_t^{n,i}-X_t^{n,i})=\frac{\beta}{n}\sum_{j:j\neq i}\frac{(Y_t^{n,j}-X_t^{n,j}) - (Y_t^{n,i}-X_t^{n,i}) }{ (Y_t^{n,j}-Y_t^{n,i})(X_t^{n,j}-X_t^{n,i}) }-(Y_t^{n,i}-X_t^{n,i}).
  \end{equation}
  Since there are almost surely no collisions between the coordinates of
  $X^n$, resp.~of $Y^n$, the right-hand side is almost surely finite for all
  $t > 0$ and every process $Y_t^{n,i}-X_t^{n,i}$ is $\mathcal{C}^1$ on
  $(0,\infty)$. Note that at time $0$ some derivatives may blow up as two
  coordinates of $X^n$ or $Y^n$ may coincide.
  
  Let us define
  \[
    M(t) = \max_{i \in \{1,\ldots,N\}} \,(Y_t^{n,i}-X_t^{n,i})
    \quad\text{and}\quad
    m(t) =\min_{i \in \{1,\ldots,N\}}\, (Y_t^{n,i}-X_t^{n,i}).
  \]
  Elementary considerations imply that $M$ and $m$ are themselves
  $\mathcal{C}^1$ on $(0,\infty)$ and that at all times $t>0$, there exist
  $i,j$ such that
  \[
    \partial_t M(t) = \partial_t (Y_t^{n,i}-X_t^{n,i})
    \quad\text{and}\quad
    \partial_t m(t) = \partial_t (Y_t^{n,j}-X_t^{n,j}).
  \]
  This would not be true if there were infinitely many processes of course.
  Now observe that if at time $t>0$ we have $Y_t^{n,i}-X_t^{n,i} = M(t)$, then
  \[
    \partial_t (Y_t^{n,i}-X_t^{n,i}) \le -(Y_t^{n,i}-X_t^{n,i}).
  \]
  This implies that $\partial_t M(t) \le - M(t)$. Similarly, we can deduce
  that $\partial_t m(t) \ge - m(t)$. Integrating these differential equations,
  we get for all $t\ge t_0 > 0$
  \[
    M(t) \le\mathrm{e}^{-(t-t_0)} M(t_0),\quad m(t) \ge\mathrm{e}^{-(t-t_0)} m(t_0).
  \]
  Since all processes are continuous on $[0,\infty)$, we can pass to the limit
  $t_0\downarrow 0$ and get for all $t\ge 0$,
  \[
    \min_{i \in \{1,\ldots,N\}}  (Y_t^{n,i}-X_t^{n,i}) \ge 0,\quad
    \max_{i \in \{1,\ldots,N\}} (Y_t^{n,i}-X_t^{n,i}) \le \mathrm{e}^{-t} \max_{i \in \{1,\ldots,N\}}
    (Y^{n,i}_0-X^{n,i}_0).
  \]
\end{proof}

\begin{remark}[Beyond the DOU dynamics]
  The monotonicity property of Lemma \ref{lemma:monotonicity} relies on the
  convexity of the interaction $-\log$, and has nothing to do with the
  long-time behavior and the strength of $V$. In particular, this monotonicity
  property remains valid for the process solving \eqref{eq:DOU} with an
  arbitrary $V$ provided that it is $\mathcal{C}^1$ and there is no explosion,
  even in the situation where $V$ is not strong enough to ensure that the
  process has an invariant law. If $V$ is $\mathcal{C}^2$ then the decay
  estimate of Lemma \ref{lemma:monotonicity} survives in the following decay
  or growth form:
  \[
    \max_{i\in \{1,\ldots,n\}}(Y_t^{n,i}-X_t^{n,i})
    \leq \max_{i\in \{1,\ldots,n\}}(Y_0^{n,i}-X_0^{n,i})
    \mathrm{e}^{t(-\inf_{\mathbb{R}}V'') },
    \quad t\geq0.
  \]
\end{remark}


%

\section*{Acknowledgements}

JB is supported by a grant from the ``Fondation CFM pour la Recherche''.

DC is supported by the project EFI ANR-17-CE40-0030.

CL is supported by the project SINGULAR ANR-16-CE40-0020-01.

\bibliographystyle{abbrv}
\bibliography{cdou.bib}

\begin{figure}[htbp]
  \centering
  \includegraphics[width=.6\textwidth]{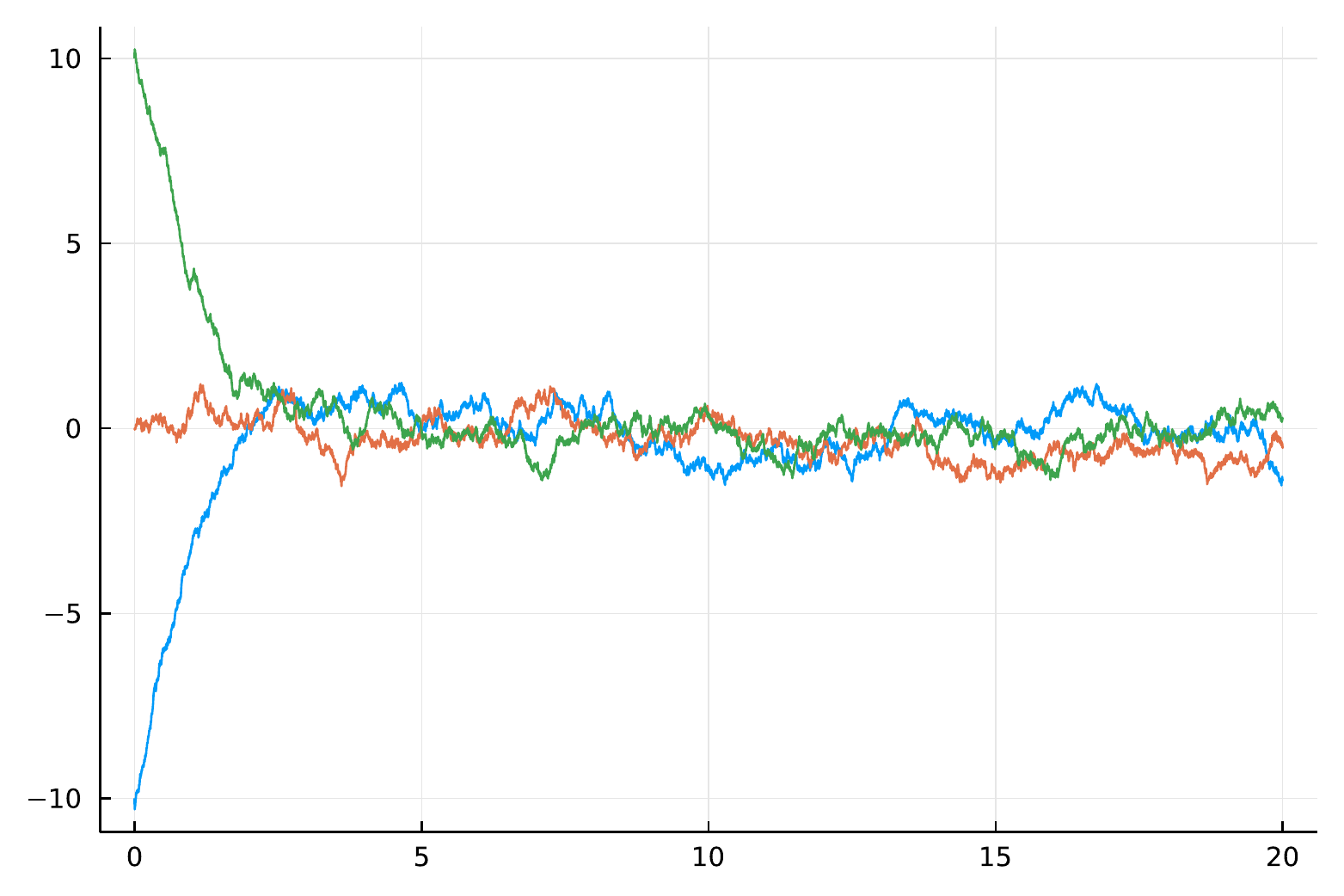}
  \includegraphics[width=.6\textwidth]{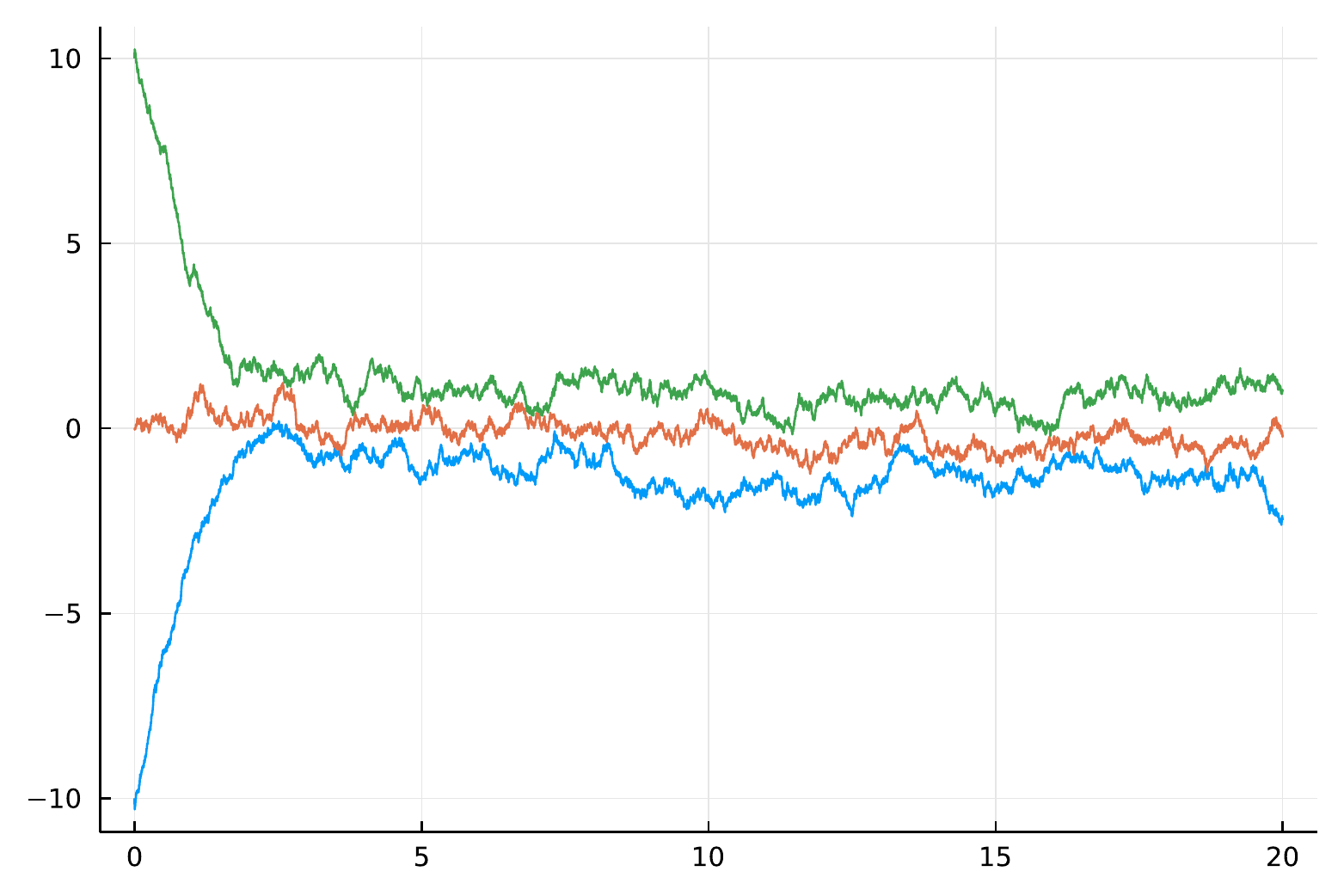}
  \caption{\label{fi:oudou}A trajectory of a single DOU with $n=3$ and
    $x^n_0=(-10,0,10)$, $\beta=0$ on top and $\beta=2$ on bottom. The driving
    Brownian motions are the same.}
\end{figure}

\begin{figure}[htbp]
  \centering
  \includegraphics[width=.6\textwidth]{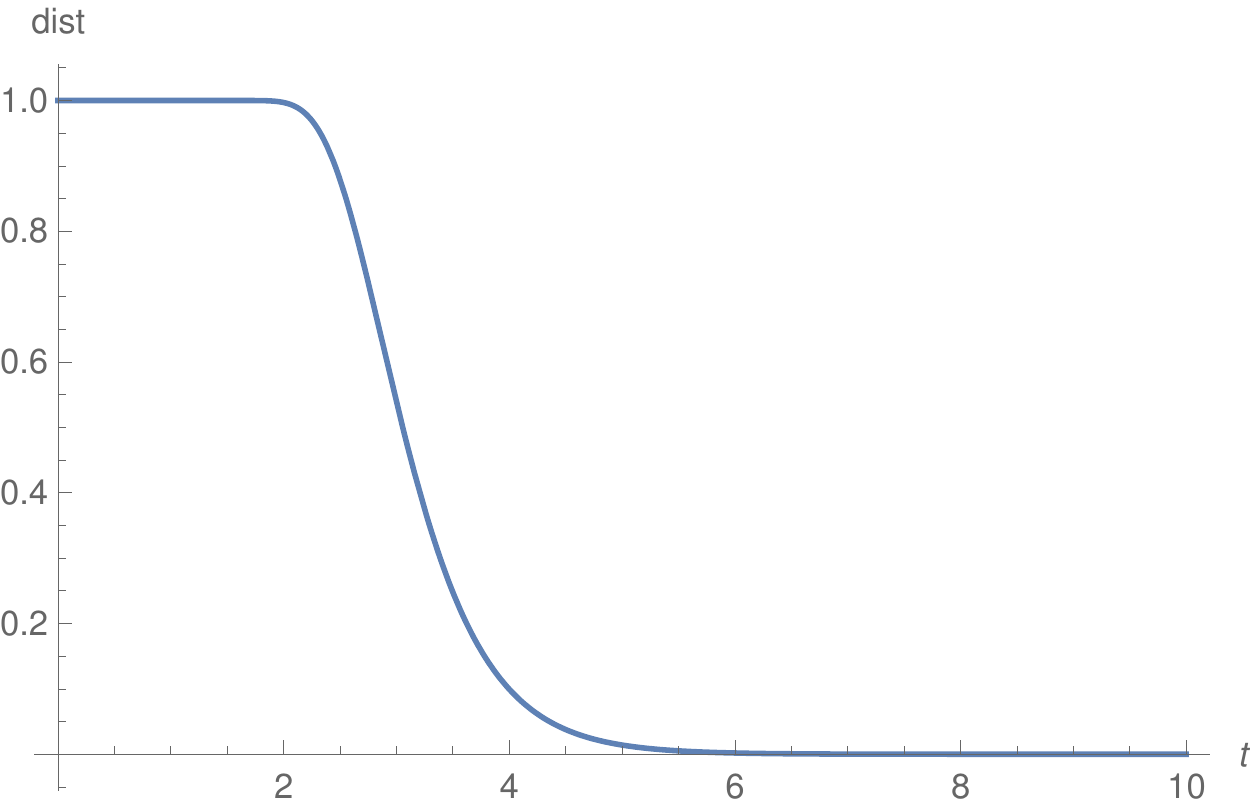}
  \caption{\label{fi:hellinger}Plot of the function
    $t\mapsto\mathrm{Hellinger}(\mathrm{Law}(X^n_t)\mid P^\beta_n)$ (see
    \eqref{eq:OUH} for the explicit formula) with $n=50$, $\beta=0$, and
    $\tfrac{|x^n_0|^2}{n}=1$. Note that $\log(50)\approx 3.9$.}
\end{figure}


\end{document}